\definecolor{webgreen}{HTML}{008000}
\newtheorem{corollary}{Corollary}
\newtheorem{proposition}{Proposition}
\newtheorem{lemma}{Lemma}
\newtheorem{remark}{Remark}
\newtheorem{theorem}{Theorem}
\theoremstyle{definition}
\newtheorem{Condition}{Condition}
\theoremstyle{remark}
\newtheorem*{general*}{A general remark}
\begin{document}
\title{The fluctuations of the giant cluster for percolation on random split trees}
\author{Gabriel Berzunza\footnote{ E-mail:
        \href{mailto:gabriel.berzunza-ojeda@math.uu.se}{gabriel.berzunza-ojeda@math.uu.se}},\,\,
    Xing Shi Cai \footnote{  E-mail: \href{mailto:xingshi.cai@math.uu.se}{xingshi.cai@math.uu.se}} \,  and
    Cecilia Holmgren\footnote{ E-mail: \href{mailto:cecilia.holmgren@math.uu.se}{cecilia.holmgren@math.uu.se}} \\ \vspace*{10mm}
{\small Department of Mathematics, Uppsala University, Sweden} }
\maketitle

\vspace{0.1in}

\begin{abstract} 
A split tree of cardinality $n$ is constructed by distributing $n$ ``balls'' in a subset of vertices of an infinite tree which encompasses many types of random trees such as $m$-ary search trees, quad trees, median-of-$(2k+1)$ trees, fringe-balanced trees, digital search trees and random simplex trees. 
In this work, we study Bernoulli bond percolation on arbitrary split trees of large but finite
cardinality $n$. We show for appropriate percolation regimes that depend on the cardinality $n$ of
the split tree that there exists a unique giant cluster, the fluctuations of the size of the giant cluster as \(n \to \infty\) are described by an infinitely divisible distribution that belongs to the class of stable
(asymmetric) Cauchy laws. This work generalizes the results for the random $m$-ary recursive trees in Berzunza
\cite{Ber2015}. Our approach is based on a remarkable decomposition of the
size of the giant percolation cluster as a sum of essentially independent random variables which may be useful for studying
percolation on other trees with logarithmic height; for instance in this work we study
also the case of regular trees.
\end{abstract}

\noindent {\sc Key words and phrases}: split trees; random trees, percolation; giant cluster; fluctuations. 

\noindent {\sc MSC 2020 subject classifications}: 60C05;  60F05; 60K35; 68P05; 05C05; 05C80.

\section{Introduction}

Consider a tree $T_{n}$ of large but finite size $n \in \mathbb{N}$ and perform Bernoulli
bond-percolation with parameter $p_{n} \in [0,1]$ that depends on the size of the graph. This means
that we remove each edge in $T_{n}$ with probability $1 - p_{n}$, independently of the other edges,
inducing a partition of the set of vertices into connected clusters. In particular, we are interested in the supercritical percolation regime, in the sense that with high
probability, there exists a giant cluster, that is of size comparable to that of the entire tree.
Bertoin \cite{Be1} established for several families of trees with $n$ vertices that the
supercritical regime corresponds to percolation parameters of the form $1-p_{n} = c/\ell(n) +
o(1/\ell(n))$ as $n \rightarrow \infty$, where $c > 0$ is fixed and $\ell(n)$ is an estimate of the
height of a typical vertex in the tree structure\footnote{For two sequences of real numbers
    $(A_{n})_{n \geq 1}$ and $(B_{n})_{n \geq 1}$ such that $B_{n} > 0$, we write $A_{n} = o(B_{n})$
    if $\lim_{n \rightarrow \infty} A_{n}/B_{n} = 0$. We also write $A_{n} = O(B_{n})$ if
    $\limsup_{n \rightarrow \infty} |A_{n}|/B_{n} < \infty$
}.  More precisely, Bertoin \cite{Be1} showed that under the previous regime the size $\Gamma_{n}$
of the cluster containing the root satisfies $\lim_{n \rightarrow \infty} n^{-1}\Gamma_{n} =
\Gamma(c)$ in law to some random variable $\Gamma(c) \not \equiv 0$. This includes, for instance,
important families of random trees with logarithmic height, such as random recursive trees,
preferential attachment trees, binary search trees where it is well-known that $\ell(n) = \ln n$;
see \cite{Drmota20092}, \cite[Section 4.4]{Durrett2010}. In those cases the random variable
$\Gamma(c)$ is a constant; see \cite{Be3}, \cite{Uribe2015}, \cite{Ber2015}. A different class of
example is the Cayley tree where $\ell(n) = \sqrt{n}$ and $\Gamma(c)$ is not a constant; see
\cite{Pitman1999}.

More recently, some authors have considered analyzing the fluctuations of the size of the largest percolation cluster as $n \rightarrow \infty$ for different families of trees with logarithmic height; see Schweinsberg \cite{Schweinsberg2012} and Bertoin \cite{Be2} for random recursive trees, Berzunza \cite{Ber2015} for $m$-ary random increasing trees (these include binary search trees) and preferential attachment trees.  
The motivation stems from the feature that the size of the giant cluster resulting from supercritical bond percolation on those trees has non-Gaussian fluctuations. Instead, they are described by an infinitely divisible distribution that belongs to the class of stable (asymmetric) Cauchy laws. This contrasts with analogous results on other random graphs where the asymptotic normality of the size of the giant clusters on supercritical percolation is established. We refer for instance to the works of Stepanov \cite{Ste1970}, Bollob\'{a}s and Riordan \cite{Bollo2012} and Seierstad \cite{Sei2013}.

The main purpose of this work is to investigate analogously the case of random split trees which were introduced by Devroye \cite{Luc1999}. The class of random split trees includes many families of trees that are frequently used in algorithm analysis, e.g., binary search trees \cite{Hoa1962}, $m$-ary search trees \cite{Pyke1965}, quad trees \cite{finkel1974}, median-of-$(2k+1)$ trees \cite{Walker1976}, fringe-balanced trees \cite{Luc1993}, digital search trees \cite{Coffman1970} and random simplex trees \cite[Example 5]{Luc1999}. Informally, a random split tree $T_{n}^{{\rm sp}}$ of ``size'' (or cardinality) $n$ is constructed as follows. Consider a rooted infinite $b$-ary tree with $b \in \mathbb{N}$ and where each vertex is a bucket of finite capacity $s \in \mathbb{N}$. We place $n$ balls at the root, and the balls individually trickle down the tree in a random fashion until no bucket is above capacity. Each vertex draws a split vector $\mathcal{V} = (V_{1}, \dots, V_{b})$ from a common distribution, where $V_{i}$ describes the probability that a ball passing through the vertex continues to the $i$-th child. We provide a precise description of this algorithm in Section \ref{sec1}. Finally, any vertex $u$ such that the sub-tree rooted as $u$ contains no balls is then removed, and we consider the resulting tree $T_{n}^{{\rm sp}}$. An important peculiarity of the split tree $T_{n}^{{\rm sp}}$ is that the number of vertices is random in general which makes the study of split trees usually challenging. It must also be pointed out that later we assume that $b < \infty$. However,  we believe that our approach can be applied to cases when $b= \infty$ with a little extra effort. The case $b=\infty$ includes uniform recursive trees and preferential attachment trees for which recently Janson \cite{janson2018} has shown can be viewed as special split trees with $b= \infty$. 

Loosely speaking, our main result shows that in the supercritical percolation regime the size of
the giant cluster has also non-Gaussian fluctuations where the ``size'' of $T_{n}^{{\rm sp}}$ can be
defined as either the number of vertices or the number of balls. We then show that the supercritical
regime corresponds to $1-p_{n} = c/\ln n$ with $c > 0$ fixed which agrees with the fact that split
trees belong to the family of trees with logarithmic height; see \cite{Luc1999}. Essentially, this
is Bertoin's \cite{Be1} criterion. Then, our main contribution establishes that the fluctuations of
the ``size'' (either number of vertices or balls) of the giant cluster are described by an
infinitely divisible distribution, the so-called continuous Luria-Delbr\"uck law. Finally, we show
that the approach developed in this work may be useful for studying percolation on other classes of
trees, such as for instance regular trees (see Section  \ref{sec6} below). 

We next introduce formally the family of random split trees and relevant background, which will enable
us to state our main result in Section \ref{sec5}.

\subsection{Random split trees} \label{sec1}

In this section, we introduce the split tree model with parameters $b, s, s_{0}, s_{1}, \mathcal{V}$ and $n$ introduced by Devroye \cite{Luc1999}. Some of the parameters are the branch factor $b \in \mathbb{N}$, the vertex capacity $s \in \mathbb{N}$, and the number of balls (or cardinality) $n \in \mathbb{N}$. The additional integers $s_{0}$ and $s_{1}$ are needed to describe the ball distribution process. They satisfy the inequalities
\begin{eqnarray} \label{eq3}
0 < s, \hspace*{3mm} 0 \leq s_{0} \leq s, \hspace*{3mm} 0 \leq b s_{1} \leq s + 1 - s_{0}.
\end{eqnarray}

\noindent The so-called random split
vector $\mathcal{V} = (V_{1}, \dots, V_{b})$ is a random non-negative vector with $\sum_{i=1}^{b} V_{i} = 1$ and $V_{i} \geq 0$, for $i=1, \dots, b$. 

Consider an infinite rooted $b$-ary tree $\mathbb{T}$, i.e., every vertex has $b$ children. We view each vertex of $\mathbb{T}$ as a bucket with capacity $s$ and assign to each vertex $u \in \mathbb{T}$ an independent copy $\mathcal{V}_{u} = (V_{u,1}, \dots, V_{u,b})$ of the random split vector $\mathcal{V}$. 

The split tree $T_{n}^{{\rm sp}}$ is constructed by distributing $n$ balls among the vertices of $\mathbb{T}$. For a vertex $u$, let $n_{u}$ be the number of balls stored in the sub-tree rooted at $u$. The tree
$T_{n}^{{\rm sp}}$ is then defined as the largest sub-tree of $\mathbb{T}$ such that $n_{u} > 0$ for
all $u \in T_{n}^{{\rm sp}}$. Let $u_{1}, \dots, u_{b}$ be the child vertices of $u$. Conditioning
on $n_{u}$ and $\mathcal{V}_{u}$, if $n_{u} \leq s$, then $n_{u_{i}} = 0$ for all $i=1, \dots, b$;
if $n_{u} > s$, then the cardinalities $(n_{u_{1}}, \dots, n_{u_{b}})$ of the $b$ sub-trees rooted
at $u_{1}, \dots, u_{b}$ are distributed as 
\begin{eqnarray*}
\text{Mult}(n_{u}-s_{0}-b s_{1}, V_{u,1}, \dots, V_{u,b} ) + (s_{1}, \dots, s_{1}),
\end{eqnarray*}

\noindent where $\text{Mult}$ denotes the multinomial distribution, and $b,s,s_{0}, s_{1}$ are integers satisfying (\ref{eq3}). 

It would be convenient to recall one more equivalent description of $T_{n}^{{\rm sp}}$ where one inserts data items into an initially empty data structure $\mathbb{T}$. Let $C(u)$ denote the number of balls in vertex $u$, initially setting $C(u) = 0$ for all $u$. We call $u$ a leaf if $C(u) > 0$ and $C(v) = 0$ for all children $v$ of $u$, and internal if $C(v) > 0$ for some strict descendant $v$ of $u$. Then $T_{n}^{{\rm sp}}$ is constructed recursively by distributing $n$ balls one at time to generate a subset of vertices of $\mathbb{T}$. The balls are labeled using the set $\{1, 2, \dots, n\}$ in the order of insertion. The $j$-th ball is added by the following procedure.
 \begin{enumerate}
 \item Insert $j$ to the root.
 \item While $j$ is at an internal vertex $u \in \mathbb{T}$, choose child $i$ with probability $V_{u,i}$ and move $j$ to child $i$.
 \item If $j$ is at a leaf $u$ with $C(u) < s$, then $j$ stays at $u$ and $C(u)$ increases by $1$. If $j$ is at a leaf with $C(u) = s$, then the balls at $u$ are distributed among $u$ and its children as follows. We select $s_{0} \leq s$ of the balls uniformly at random to stay at $u$. Among the remaining $s + 1 - s_{0}$ balls, we uniformly at random distribute $s_{1}$ balls to each of the $b$ children of $u$. Each of the remaining $s + 1 - s_{0} - bs_{1}$ balls is placed at a child vertex chosen independently at random according to the split vector assigned to $u$. This splitting process is repeated for any child which receives more than $s$ balls. 
\end{enumerate}
We stop once all $n$ balls have been placed in $\mathbb{T}$ and obtain $T_{n}^{{\rm sp}}$ by deleting all vertices $u \in \mathbb{T}$ such that the sub-tree rooted at $u$ contains no balls. Note that an internal vertex of $T_{n}^{{\rm sp}}$ contains exactly $s_{0}$ balls, while a leaf contains a random number in $\{1, . . . , s\}$. This description will be used in the Appendix \ref{sec3}.

\begin{remark}
Note that the number of vertices $N$ of $T_{n}^{{\rm sp}}$ is a random variable in general although the number of balls $n$ is deterministic. This is one of the
main challenges in the study of split trees. 
\end{remark}

\begin{remark} \label{remark2}
Depending on the choice of the parameters, several important data structures may be modeled. For instance, the binary search trees where $b=2$, $s = s_{0} = 1$, $s_{1} = 0$ and $\mathcal{V}$ is distributed as $(U, 1-U)$ for $U$ a random variable uniform on $[0,1]$. In this case $N=n$. Some other relevant (and
more complicated) examples of split trees are $m$-ary search trees, median-of-$(2k+1)$ trees, quad
trees, simplex trees; see for instance the original work of Devroye \cite{Luc1999} for details.
\end{remark}

\begin{remark}
    We can assume without loss of generality that the components of
    the split vector $\mathcal{V}$ are identically distributed by the random permutations explained in  \cite{Luc1999}. In particular, $\mathbb{E}[V_{1}] = 1/b$. 
\end{remark}

Two quantities deeply related to the structure of split trees are
\begin{eqnarray} \label{eq56}
\mu \coloneqq b \mathbb{E}[-V_{1} \ln V_{1}] \hspace*{5mm} \text{and} \hspace*{5mm} \sigma^{2} \coloneqq b \mathbb{E}[V_{1} \ln^{2} V_{1}] - \mu^{2}. 
\end{eqnarray}

\noindent Note that $\mu \in (0, \ln b)$ and $\sigma < \infty$. They were introduced first by Devroye \cite{Luc1999} in the
study of the height of $T_{n}^{{\rm sp}}$ as the number of balls increases.

In the study of split trees, the following condition is often assumed:

\begin{Condition} \label{Cond1}
Assume that $\mathbb{P}(V_{1} = 1) = \mathbb{P}(V_{1} = 0) = 0$.
\end{Condition}

In the present work, we use the so-called total path length of $T_{n}^{{\rm sp}}$ defined by
$\Psi(T_{n}^{{\rm sp}}) \coloneqq \sum_{i=1}^{n} D_{n}(i)$, where $D_{n}(j)$ denotes the height (or depth) of the ball labeled $j$ when all $n$ balls
have been inserted in $T_{n}^{{\rm sp}}$. Broutin and Holmgren \cite[Theorem
3.1]{Bro2012} have shown that under Condition \ref{Cond1} (and even for degenerate $V_{1}$),
\begin{eqnarray} \label{eq10}
\mathbb{E}[\Psi(T_{n}^{{\rm sp}})] =  \mu^{-1} n \ln n + \varpi(\ln n) n + o(n), 
\end{eqnarray}

\noindent where $\varpi: \mathbb{R} \rightarrow \mathbb{R}$ is a continuous periodic function of period 
\begin{eqnarray} \label{eq51}
d \coloneqq \sup \{ a \geq 0: \mathbb{P}(\ln V_{1} \in a \mathbb{Z}) = 1 \}. 
\end{eqnarray}

\noindent In particular, if the random variable $\ln V_{1}$ is non-lattice\footnote{The random
variable $\ln V_{1}$ is non-lattice when there is not $a \in \mathbb{R}$ such that $\ln V_{1} \in a
\mathbb{Z}$ almost surely.  The constant $d$ is called the span of the lattice when $d >0$ and $\ln
V_{1}$ is non-lattice when $d =0$.}, then $d = 0$ and the function $\varpi$ is a constant and we
write $\varsigma \equiv \varpi$. 

It is important to point out that the proof \cite[Theorem
3.1]{Bro2012} is missing some details for the case when $\ln V_{1}$ is lattice. The issue there is that the convergence (24) in \cite{Bro2012} only holds when the distribution of $\ln V_{1}$ is non-lattice. Nevertheless, a close look to the proof of \cite[Lemma 4.2]{Bro2012} and Lemma \ref{ExtraLemma} (ii) below show that result in \cite[Theorem
3.1]{Bro2012} is correct also in the lattice case.  

\begin{remark}
In binary search trees the function $\varpi$ equals to  $2 \gamma -4$ where $\gamma$ is the Euler's constant; see \cite{Hibb1962}. A similar result has been proven for random $m$-ary search trees \cite{Mah1986}, quad
trees \cite{Nein1999}, the random median of a $(2k + 1)$-tree \cite{Ro2001}, tries, and Patricia tries \cite{Bourdon2001}.
\end{remark}

An alternative notion of path length is the sum of all the heights of the
vertices in $T_{n}^{{\rm sp}}$, i.e., $\Upsilon(T_{n}^{{\rm sp}}) \coloneqq  \sum_{u \in T_{n}^{{\rm sp}}} d_{n}(u)$, where $d_{n}(u)$ denotes the height of the vertex $u \in T_{n}^{{\rm sp}}$. Recall that the height of a vertex is defined as the minimal number of edges of $T_{n}^{{\rm sp}}$ which are needed to connect it to the root.

\begin{Condition} \label{Cond2}
Suppose that $\ln V_{1}$ is non-lattice. Furthermore, for some $\alpha > 0$ and $\varepsilon > 0$, 
$\mathbb{E}[N] = \alpha n + O(n (\ln n)^{-1-\varepsilon})$.
\end{Condition}

\noindent Assuming that Condition \ref{Cond2} holds, Broutin and Holmgren \cite[Corollary 5.1]{Bro2012}  showed that 
\begin{eqnarray} \label{eq5}
\mathbb{E}[\Upsilon(T_{n}^{{\rm sp}})] =  \alpha \mu^{-1} n \ln n + \zeta n + o(n), \hspace*{4mm} \text{for some constant} \hspace*{3mm} \zeta \in \mathbb{R}.
\end{eqnarray}

\begin{remark}
Holmgren \cite[Theorem 1.1]{Holm2012} showed that if $\ln V_{1}$ is non-lattice, i.e., $d=0$, then
there exists a constant $\alpha >0$ such that $\mathbb{E}[N] = \alpha n + o(n)$ and furthermore
$Var(N) = o(n^{2})$. However, this result is not enough to deduce (\ref{eq5}) from (\ref{eq10}) and
the extra control in $\mathbb{E}[N]$ is needed; see \cite[Section 5.1]{Bro2012}. On the one hand,
Condition \ref{Cond2} is satisfied in many interesting cases. For instance, it holds for $m$-ary
search trees \cite{Mah1989}. Moreover, Flajolet et al.\ \cite{Flajolet2010} showed that for most
tries (where $s=1$ and $s_{0} = 0$ and as long as $\ln V_{1}$ is non-lattice) Condition \ref{Cond2}
holds. On the other hand, there are some special cases of random split trees that do not satisfy Condition \ref{Cond2}. For instance, tries with a fixed split vector $(1/b, \dots, 1/b)$, in which case $\ln V_{1}$ is lattice with $d = b$.  
\end{remark}

\begin{remark} \label{remark1}
It is important to mention that one can use Condition \ref{Cond2} to improve the result of Holmgren \cite[Theorem 1.1]{Holm2012} and obtain that $Var(N) = o (n^{2}\ln^{-2-2\varepsilon} n )$. We refer to \cite[Theorem 1.1]{Holm2012} and \cite[Remark 3.1]{Holm2012} for a proof.
\end{remark}

Finally, we recall and extend some results in \cite[Section 2]{Holm2012} and \cite[Section 4.2]{Bro2012} related to the application of renewal theory in the study of split-trees. For $k \geq 1$, set $S_{k} \coloneqq \sum_{j=1}^{k} - \ln V_{j}^{\prime}$ where $(V_{j}^{\prime}, j\geq 1)$ is a sequence of i.i.d.\ copies of $V_{1}$. Following the presentation in Holmgren \cite{Holm2012} (or  \cite[Section 4.2]{Bro2012}), for $k \geq 1$ and $t \in \mathbb{R}$, let $\vartheta_{k}(t) \coloneqq b^{k} \mathbb{P}(S_{1} \leq t)$ and define the renewal function 
\begin{eqnarray*}
U(t) =  \sum_{k=1}^{\infty} \vartheta_{k}(t).
\end{eqnarray*}

\noindent Observe that $U(t) = 0$, for $t < 0$. For $t \in \mathbb{R}$, let $\vartheta(t) = \vartheta_{1}(t)$ and observe that $U$ satisfies the following renewal equation 
\begin{eqnarray} \label{ExtraEq1}
U(t) = \vartheta(t) + (U \ast {\rm d} \vartheta)(t), \hspace*{3mm} \text{where} \hspace*{3mm}  (U \ast {\rm d} \vartheta)(t) = \int_{0}^{t}U(t-z) {\rm d} \vartheta(z),  \hspace*{4mm} \text{for} \hspace*{2mm} t \geq 0.
\end{eqnarray}

\begin{lemma} \label{ExtraLemma}
Assume that $\mathbb{P}(V_{1} = 1) = \mathbb{P}(V_{1} = 0) = 0$. The renewal function $U$ satisfies the following. 
\begin{itemize}
\item[(i)] Suppose that $\ln V_{1}$ is non-lattice. Then,
\begin{eqnarray*} 
 U(t)= \left(\frac{1}{\mu} + o(1) \right)e^{t}, \hspace*{3mm} \text{as} \hspace*{2mm} t \rightarrow \infty. 
\end{eqnarray*}

\item[(ii)] Suppose that the distribution of $\ln V_{1}$ is lattice with span $d$ defined in (\ref{eq51}). Then,
\begin{eqnarray*} 
 U(d \lfloor t \rfloor)= \left(\frac{d}{\mu} \frac{1}{1-e^{-d}}+ o(1) \right)e^{d \lfloor t \rfloor}, \hspace*{3mm} \text{as} \hspace*{2mm} t \rightarrow \infty.
\end{eqnarray*}
\end{itemize}
\end{lemma}

\begin{proof}
Part (i) follows from Holmgren \cite[Lemma 2.1]{Holm2012}. To prove part (ii), we use the lattice version of the key renewal theorem. Observe that ${\rm d} \vartheta(t)$ is not a probability measure. Following Holmgren \cite{Holm2012} (or \cite[Section 4.2]{Bro2012}), one can define another (``tilted'') measure ${\rm d} \omega(t) = e^{-t} {\rm d} \vartheta(t)$ which indeed is a probability measure. Furthermore, ${\rm d} \omega(t)$ is lattice with period $d$. The renewal equation (\ref{ExtraEq1}) can then be written as
\begin{eqnarray*}
\hat{U}(t) = \hat{\vartheta}(t) + (\hat{U} \ast {\rm d} \omega)(t), \hspace*{3mm} \text{where} \hspace*{3mm}  \hat{U}(t) = e^{-t}U(t) \hspace*{3mm} \text{and} \hspace*{3mm} \hat{\vartheta}(t)  = e^{-t} \vartheta(t),
\end{eqnarray*}

\noindent for $t \geq 0$. On the other hand, $\sum_{k=0}^{\infty} \hat{\vartheta}(kd) = (1-e^{-d})^{-1}$. Therefore, (ii) follows from \cite[Proposition 4.1, Chapter V]{asmussen2003}. 
\end{proof}

In \cite[Section 4.2]{Bro2012}, the second-order behavior of the renewal function $U$ is also studied. More precisely, \cite[Lemma 4.2]{Bro2012} establishes that under Condition \ref{Cond1} (and even for degenerate $V_{1}$) one has that
\begin{eqnarray} \label{eq33}
\int_{0}^{t} e^{-z}\left(U(z)  -\mu^{-1}e^{z} \right) {\rm d}z = \frac{\sigma^{2}-\mu^{2}}{2\mu^{2}} - \mu^{-1} + \phi(t) +  o(1), \hspace*{4mm} \text{as} \hspace*{2mm} t \rightarrow \infty,
\end{eqnarray}

\noindent  where $\phi: \mathbb{R} \rightarrow \mathbb{R}$ is a continuous $d$-periodic function with $d$ defined in (\ref{eq51}). Moreover, if $d=0$ then $\phi \equiv 0$; see Holmgren \cite[Corollary 2.2]{Holm2012}  for the non-lattice case.

\subsection{Main results} \label{sec5}

In this section, we present the main results of this work. Let $T_{n}^{{\rm sp}}$ be a split tree with $n$ balls. We then perform Bernoulli bond percolation with parameter
\begin{eqnarray} \label{eq9}
p_{n} = 1 -\frac{c}{\ln n}, 
\end{eqnarray}

\noindent where $c > 0$ is fixed. We write $\hat{G}_{n}$ for the size, i.e., the number of balls, of the percolation cluster that contains the root. Our first result shows that this choice of the percolation parameter corresponds precisely to the supercritical regime we are interested in.  

\begin{lemma} \label{lemma4}
Suppose that Condition \ref{Cond1} holds. In the regime (\ref{eq9}), we have that
\begin{eqnarray*} 
\lim_{n \rightarrow \infty} n^{-1} \hat{G}_{n}  =  e^{- \frac{c}{ \mu} }, \hspace*{5mm} \text{in
probability}.
\end{eqnarray*}

\noindent Moreover, the root cluster is the unique giant component, i.e., $\lim_{n \rightarrow \infty} n^{-1} \hat{G}_{n}^{2{\rm nd}} = 0$ in probability, where $\hat{G}_{n}^{2{\rm nd}}$ denotes the number of balls of the second largest percolation cluster. 
\end{lemma}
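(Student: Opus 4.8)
The plan is to analyze the root cluster by exploiting the well-known fact that under Bernoulli bond percolation with parameter $p_n$, the cluster containing the root is exactly the set of balls $j$ such that every edge on the path from the root to $j$ survives, i.e., every ball $j$ survives independently with probability $p_n^{D_n(j)}$ where $D_n(j)$ is the depth of $j$ in $T_n^{\rm sp}$. Hence $\mathbb{E}[\hat G_n \mid T_n^{\rm sp}] = \sum_{j=1}^n p_n^{D_n(j)}$. The first step is therefore to understand the empirical distribution of depths. Since split trees have logarithmic height with a concentration of typical depths around $\mu^{-1}\ln n$ (this is essentially the content of \eqref{eq10} together with the known depth limit theorems for split trees, e.g.\ Devroye \cite{Luc1999} and Broutin--Holmgren \cite{Bro2012}), and since $p_n^{D_n(j)} = \exp(D_n(j)\ln(1 - c/\ln n)) = \exp(-(c/\ln n)D_n(j) + o(\cdot))$, a typical ball at depth $\sim \mu^{-1}\ln n$ survives with probability $\sim e^{-c/\mu}$. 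I would make this rigorous by showing $n^{-1}\mathbb{E}[\hat G_n] \to e^{-c/\mu}$: split the sum over balls into those with $D_n(j)$ within a window $\mu^{-1}\ln n \pm \epsilon \ln n$ (which carry almost all the mass and each contribute $e^{-c/\mu + O(\epsilon)}$) and a negligible remainder, using the tail bounds on $D_n(j)$ available for split trees. This gives convergence in $L^1$ of the conditional mean.

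The second step is to control the conditional variance $\mathrm{Var}(\hat G_n \mid T_n^{\rm sp})$ and show it is $o(n^2)$, so that $\hat G_n$ concentrates around its conditional mean. Conditionally on the tree, the survival events of distinct balls are \emph{not} independent — two balls sharing part of their root-path are positively correlated — so $\mathrm{Var}(\hat G_n \mid T_n^{\rm sp}) = \sum_{i,j}(p_n^{d_n(i\wedge j)} - p_n^{D_n(i)}p_n^{D_n(j)})$ where $i\wedge j$ is the most recent common ancestor. I would bound this by relating it to pairwise path overlaps; since $1 - p_n \to 0$, each covariance term is small, and a first-moment/union argument over the tree structure (using that the number of vertices is $O(n)$ and depths are $O(\ln n)$) should yield $o(n^2)$. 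Combined with the first step and Chebyshev, this gives $n^{-1}\hat G_n \to e^{-c/\mu}$ in probability.

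For the uniqueness of the giant component, the third step is to show that after removing the root cluster, every remaining cluster is $o(n)$. The natural approach: condition on the tree and on which edges adjacent to the root-cluster are cut; each severed subtree is itself (a subtree close to) a split tree on a sub-linear or at-most-linear number of balls, but crucially each non-root cluster is contained in a subtree hanging off an edge that was cut, and such a subtree rooted at depth $k$ has expected size decaying in $k$. One shows $\mathbb{E}[\hat G_n^{2\mathrm{nd}}]$ via a bound like $\sum_u (1 - p_n)\,p_n^{d_n(u)}\,\mathbb{E}[n_u \mid \text{size}]$ with an extra factor accounting for the cluster of the top vertex of that subtree, and argues this is $o(n)$ because the largest subtree hanging below a cut edge is sub-linear with high probability (deep cuts create small subtrees; shallow cuts are rare since $1-p_n$ is small). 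Alternatively, and perhaps more cleanly, I would invoke the decomposition advertised in the abstract — writing $n - \hat G_n$ as a sum of essentially independent contributions, one per cut edge on the "spine" — and show the maximal summand is $o(n)$.

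The main obstacle I anticipate is the second step: handling the conditional covariance structure and the randomness of the tree simultaneously. One must control both that the tree's depth profile is well-behaved (which for split trees is delicate because $N$ is random and the renewal-theoretic estimates \eqref{eq11}--\eqref{eq33} are needed to pin down the profile) \emph{and} that path overlaps don't inflate the variance; the interplay between the random number of vertices and the positive correlations is exactly where split trees are harder than, say, recursive trees. A secondary subtlety is making sure the $o(n)$ bound for the second-largest cluster is uniform enough — ruling out the unlikely event that a cut very near the root detaches a macroscopic subtree requires a careful estimate on $p_n^{d_n(u)} n_u$ for small $d_n(u)$.
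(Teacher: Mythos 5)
Your strategy is essentially the one the paper uses: the paper proves Lemma \ref{lemma4} by invoking Bertoin's criterion (\cite[Corollary 1 and Proposition 1]{Be1}), whose hypotheses are exactly your two inputs --- the depth of a uniform ball satisfies $D_n(b_1)/\ln n \to 1/\mu$ (Lemma \ref{lemma5}), and the two-ball quantity $D_n(b_1,b_2)/\ln n \to 2/\mu$, equivalently the depth of the last common ancestor of two uniform balls is $o(\ln n)$ in probability (Lemma \ref{lemma2}, Corollary \ref{cor2}) --- so your first two steps reproduce the cited argument rather than replace it. Three points of comparison. First, your conditional covariance is misstated: the probability that balls $i$ and $j$ are both joined to the root is $p_n^{D_n(i)+D_n(j)-D_n(i\wedge j)}$, not $p_n^{D_n(i\wedge j)}$; with the correct formula each covariance term is at most $1-p_n^{D_n(i\wedge j)}\leq (1-p_n)\,D_n(i\wedge j)$, so your ``main obstacle'' reduces precisely to showing that the ancestor depth of two uniform balls is $o(\ln n)$. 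Second, the tool that closes this is not the renewal-theoretic profile estimates (\ref{eq11})--(\ref{eq33}), and the randomness of $N$ is irrelevant here since Lemma \ref{lemma4} counts balls: the paper's Lemma \ref{lemma2} settles it with a union bound over the $b^{\lceil \delta h(n)\rceil}$ vertices at depth $\lceil\delta h(n)\rceil$ together with the subtree-size second moment $\mathbb{E}[n_v^2]\approx n^2\,\mathbb{E}[V_1^2]^{\lceil \delta h(n)\rceil}$ and $\mathbb{E}[V_1^2]<1/b$ (using Condition \ref{Cond1}); the renewal estimates only enter for the fluctuation theorems. Third, for uniqueness the paper again follows \cite[Proposition 1]{Be1}: if two uniform balls lie in the same cluster they are both connected to their common ancestor, whose root-path has length $o(\ln n)$ in probability and hence survives percolation with probability tending to one, so any cluster containing two typical balls contains the root; this two-point argument is shorter than your proposed decomposition into subtrees hanging below cut edges, which would additionally require ruling out shallow cuts and bounding the largest subtree below a deep cut --- workable, but extra estimates that the paper's route avoids.
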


Alternatively, let \(G_{n}\) be the number of vertices in the root cluster. Then we have the
similar result:
\begin{lemma} \label{lemma3}
Suppose that Conditions \ref{Cond1} and \ref{Cond2} hold. In the regime (\ref{eq9}), we have that
\begin{eqnarray} \label{eq2}
\lim_{n \rightarrow \infty} n^{-1} G_{n} = \alpha e^{- \frac{c}{ \mu} }, \hspace*{5mm} \text{in probability}, 
\end{eqnarray}

\noindent where $\alpha >0$ is the constant in Condition \ref{Cond2}. Moreover, the root cluster is the unique giant component, i.e., $\lim_{n \rightarrow \infty} n^{-1} G_{n}^{2{\rm nd}} = 0$ in probability, where $G_{n}^{2{\rm nd}}$ denotes the number of vertices of the second largest percolation cluster. 
\end{lemma}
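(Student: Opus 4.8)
The plan is to run the argument behind Lemma~\ref{lemma4} with \emph{vertices} in place of balls; the factor $\alpha$ enters simply because the whole split tree has $N=\alpha n+o(n)$ vertices. Write $G_{n}=\sum_{u\in T_{n}^{{\rm sp}}}\mathbf{1}\{0\leftrightarrow u\}$, where $\{0\leftrightarrow u\}$ is the event that none of the $d_{n}(u)$ edges on the path of $T_{n}^{{\rm sp}}$ from the root to $u$ is removed. Conditioning on $T_{n}^{{\rm sp}}$ (edges being removed independently) we have $\mathbb{P}(0\leftrightarrow u\mid T_{n}^{{\rm sp}})=p_{n}^{d_{n}(u)}$, so $\mathbb{E}[G_{n}\mid T_{n}^{{\rm sp}}]=\sum_{u}p_{n}^{d_{n}(u)}$; moreover, for vertices $u,v$ with most recent common ancestor $u\wedge v$ the segments $0\to u\wedge v$, $u\wedge v\to u$ and $u\wedge v\to v$ are edge-disjoint, so
\[
\mathrm{Var}(G_{n}\mid T_{n}^{{\rm sp}})=\sum_{u,v}p_{n}^{d_{n}(u)+d_{n}(v)-d_{n}(u\wedge v)}\bigl(1-p_{n}^{d_{n}(u\wedge v)}\bigr).
\]
It therefore suffices to show that, in probability, (i) $n^{-1}\sum_{u}p_{n}^{d_{n}(u)}\to\alpha e^{-c/\mu}$ and (ii) $n^{-2}\,\mathrm{Var}(G_{n}\mid T_{n}^{{\rm sp}})\to0$; then Chebyshev's inequality, applied conditionally on $T_{n}^{{\rm sp}}$, yields (\ref{eq2}), and the uniqueness of the giant cluster will come out of the proof of (ii).

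Step (i) is the crux. Since $1-p_{n}=c/\ln n$ we have $p_{n}^{d}=e^{-cd/\ln n}\bigl(1+O(d/\ln^{2}n)\bigr)$ uniformly for $d=O(\ln n)$, and with high probability every vertex of $T_{n}^{{\rm sp}}$ has depth $O(\ln n)$ (a Chernoff-type bound on $\sum_{k}b^{k}\mathbb{P}(S_{k}\le\ln n)$, cf.\ (\ref{eq11})); hence $n^{-1}\sum_{u}p_{n}^{d_{n}(u)}$ has the same limit as $n^{-1}\sum_{u}e^{-c\,d_{n}(u)/\ln n}$. I would then establish that the empirical law of normalised vertex depths, $N^{-1}\sum_{u}\delta_{d_{n}(u)/\ln n}$, converges in probability to the Dirac mass at $\mu^{-1}$: its mean tends to $\mu^{-1}$ because $\mathbb{E}[\Upsilon(T_{n}^{{\rm sp}})]=\alpha\mu^{-1}n\ln n+O(n)$ by (\ref{eq5}) — this is the one place Condition~\ref{Cond2} is used, and it is what supplies the constant $\alpha$ — together with $\mathbb{E}[N]=\alpha n+o(n)$ and $\mathrm{Var}(N)=o\bigl(n^{2}(\ln n)^{-2-2\varepsilon}\bigr)$ from Condition~\ref{Cond2} and Remark~\ref{remark1} (which give $N/n\to\alpha$ in probability and, by similar arguments, the concentration of $\Upsilon$), while its spread is killed by a large-deviation bound on the number of vertices at depth outside $(1\pm\eta)\mu^{-1}\ln n$, read off from the renewal asymptotics (\ref{eq11}) (using (\ref{eq33}) if a sharp remainder is wanted). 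Splitting $\sum_{u}e^{-c\,d_{n}(u)/\ln n}$ according to whether $d_{n}(u)/\ln n$ lies within $\eta$ of $\mu^{-1}$, the central part contributes $(1+o(1))e^{-c/\mu}N$ up to a factor $e^{\pm c\eta}$ and the rest is $o(n)$; since $N/n\to\alpha$, letting $\eta\downarrow0$ gives (i).

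Step (ii) is a routine second-moment estimate. Grouping the double sum by $w=u\wedge v$ and using $\sum_{u,v:\,u\wedge v=w}p_{n}^{d_{n}(u)+d_{n}(v)-d_{n}(w)}\le p_{n}^{d_{n}(w)}M(w)^{2}$ with $M(w)\coloneqq\sum_{u\succeq w}p_{n}^{d_{n}(u)-d_{n}(w)}\le|T^{(w)}|$ (here $T^{(w)}$ is the subtree of $T_{n}^{{\rm sp}}$ rooted at $w$) together with $1-p_{n}^{d_{n}(w)}\le c\,d_{n}(w)/\ln n$, one gets
\[
\mathrm{Var}(G_{n}\mid T_{n}^{{\rm sp}})\le\sum_{w}\bigl(1-p_{n}^{d_{n}(w)}\bigr)p_{n}^{d_{n}(w)}M(w)^{2}\le\frac{c}{\ln n}\sum_{k\ge0}k\sum_{w:\,d_{n}(w)=k}|T^{(w)}|^{2}.
\]
Since $T^{(w)}$ is itself a split tree on $n_{w}$ balls, the variance estimate of Remark~\ref{remark1} gives $\mathbb{E}[\,|T^{(w)}|^{2}\mid n_{w}]=O(n_{w}^{2})$, and an elementary recursion using $\mathbb{E}[V_{1}]=1/b$ gives $\mathbb{E}\bigl[\sum_{d_{n}(w)=k}n_{w}^{2}\bigr]\le n^{2}q^{k}+O(n)$ with $q\coloneqq b\,\mathbb{E}[V_{1}^{2}]$; here $q<1$ precisely because Condition~\ref{Cond1} rules out $V_{1}\in\{0,1\}$. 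As $\sum_{k}kq^{k}<\infty$ and the height of $T_{n}^{{\rm sp}}$ is $O(\ln n)$ with a light tail, this yields $\mathbb{E}[\mathrm{Var}(G_{n}\mid T_{n}^{{\rm sp}})]=O(n^{2}/\ln n)$, so (ii) follows from Markov's inequality. The same estimate settles uniqueness: for fixed $K$ there are at most $b+\cdots+b^{K}=O(1)$ edges of $T_{n}^{{\rm sp}}$ within graph distance $K$ of the root, so with probability $\to1$ none is removed and the root cluster then contains every vertex at depth $\le K$; any other cluster therefore lies inside a single subtree $T^{(w)}$ with $d_{n}(w)=K+1$, whence $\mathbb{P}(G_{n}^{2{\rm nd}}>\varepsilon n)\le\mathbb{P}\bigl(\sum_{d_{n}(w)=K+1}|T^{(w)}|^{2}>\varepsilon^{2}n^{2}\bigr)+o(1)=O(\varepsilon^{-2}q^{K+1})+o(1)$, and letting $n\to\infty$, then $K\to\infty$, then $\varepsilon\downarrow0$ gives $n^{-1}G_{n}^{2{\rm nd}}\to0$ in probability.

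The real work is in step (i): showing that the vertex-depth profile of $T_{n}^{{\rm sp}}$ concentrates at $\mu^{-1}\ln n$ and transferring this to the nonlinear functional $\sum_{u}p_{n}^{d_{n}(u)}$. The covariance identity, the subtree-size bound and the uniqueness argument are comparatively mechanical; it is in (i) that Condition~\ref{Cond2} (via (\ref{eq5})) is genuinely needed — and produces the constant $\alpha$ — and where the renewal machinery behind (\ref{eq11})--(\ref{eq33}) has to be invoked to handle the random vertex set.
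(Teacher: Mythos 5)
Your proposal is correct in outline, but it takes a genuinely different route from the paper. The paper disposes of Lemma~\ref{lemma3} in a few lines by invoking Bertoin's general criterion \cite[Corollary 1 and Proposition 1]{Be1} and verifying its hypotheses — concentration of the depth of one uniform vertex and of the joint depth of two uniform vertices — in Appendix~\ref{sec3} (Lemma~\ref{lemma1} and Corollary~\ref{cor2}, which in turn rest on Holmgren's and Broutin--Holmgren's moment estimates for $\Upsilon(T_{n}^{{\rm sp}})$ and $N$). You instead reprove the criterion inline by a conditional first/second moment method on $G_{n}$: your identity $\mathbb{E}[G_{n}\mid T_{n}^{{\rm sp}}]=\sum_{u}p_{n}^{d_{n}(u)}$ is exactly Bertoin's key observation, your step (i) is equivalent to the depth-profile concentration of Lemma~\ref{lemma1}, and your covariance bound via $\mathbb{E}\bigl[\sum_{d_{n}(w)=k}n_{w}^{2}\bigr]\lesssim n^{2}q^{k}$ with $q=b\,\mathbb{E}[V_{1}^{2}]<1$ is the same estimate (cf.\ (\ref{eq7})) that the paper uses in Lemma~\ref{cor1} to show $d_{n}(u_{1}\wedge u_{2})=o_{\rm p}(\ln n)$; your uniqueness argument (whp no edge is cut within distance $K$ of the root, so the second cluster sits inside one subtree at depth $K+1$) is a direct, self-contained replacement for \cite[Proposition 1]{Be1}. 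What your route buys is independence from Bertoin's black box and a more quantitative variance bound ($O(n^{2}/\ln n)$ rather than just $o(n^{2})$); what it costs is that you must carry out the depth-profile concentration yourself. That is the one thin spot: knowing $\mathbb{E}[\Upsilon(T_{n}^{{\rm sp}})]\sim\alpha\mu^{-1}n\ln n$ fixes only the \emph{mean} of the empirical depth measure, and a ``large-deviation bound read off from (\ref{eq11})'' controls potential vertices of $\mathbb{T}$ (those with $\hat n_{v}\geq 1$) rather than actual vertices of $T_{n}^{{\rm sp}}$; to kill the spread you really need a second-moment or tail bound on $d_{n}(u_{1})$ for a uniform vertex, which is precisely what the paper imports from \cite{Holm2012} in Lemma~\ref{lemma1}(ii)--(iv). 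With that ingredient supplied (and with the $s_{1}$-dependent error terms in (\ref{eq7}) tracked so that the $k^{2}b^{k}$ correction is summed only over actual vertices, of which there are at most $n$ per level), your argument goes through and uses Conditions~\ref{Cond1} and~\ref{Cond2} in exactly the places the paper does.
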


Lemma \ref{lemma4} and Lemma \ref{lemma3} are
a direct consequence of the results of Bertoin \cite{Be1} which
provides a simple
characterization of tree families and percolation regimes which yield giant clusters;  details of their proofs
are given in Section \ref{secproflemma}.

The results in Lemma \ref{lemma4} and Lemma \ref{lemma3} 
can be viewed as the law of large numbers for the  ``size'' of the giant cluster, and it is then natural to investigate the fluctuations of $\hat{G}_{n}$ and $G_{n}$. To give a precise statement, recall that a real-valued random variable $Z$ has the so-called continuous Luria-Delbr\"uck law\footnote{The name of this distribution had its origin in a series of classic experiments in evolutionary biology pionneered by Luria and Delbr\"uck \cite{luria1943} in order to study ``random mutation'' versus ``directed adaptation'' in the context of bacteria becoming resistant to a previously lethal agent. We refer also to \cite{Moohle2005}.} when its characteristic function is given by
\begin{eqnarray*}
\mathbb{E}\left[e^{itZ} \right] = \exp\left( -\frac{\pi}{2} |t| - it \ln |t| \right), \hspace*{5mm} t \in \mathbb{R}. 
\end{eqnarray*}

\noindent This distribution arises in limit theorems for sums of positive i.i.d.\ random variables
in the domain of attraction of a completely asymmetric Cauchy process; see e.g., Geluk and de Haan
\cite{Geluk2000}. In the context of percolation on large trees, it was observed first by
Schweinsberg \cite{Schweinsberg2012} (see also Bertoin \cite{Be2} for an alternative approach) in
relation with the fluctuations of the size (number of vertices) of the giant cluster for
supercritical percolation on random recursive trees. More precisely, let $T_{n}^{{\rm rec}}$ be a random recursive tree with $n$ vertices and denote by $G_{n}^{{\rm rec}}$ the size (number of vertices) of the largest percolation cluster after performing percolation 
with parameter $p_{n}$ as in (\ref{eq9}); In \cite{Be3}, it has been proven that this yields also to the supercritical regime in $T_{n}^{{\rm rec}}$, i.e., $\lim_{n \rightarrow \infty} n^{-1}G_{n}^{{\rm rec}} = e^{-c}$ in probability. Then,
\begin{eqnarray*}
\left( n^{-1} G_{n}^{{\rm rec}} -e^{-c} \right) \ln n - ce^{-c} \ln \ln n \xrightarrow[ ]{d} -ce^{-c}(Z + \ln c),
\end{eqnarray*} 

\noindent where $\xrightarrow[ ]{d}$ means convergence in distribution as $n \rightarrow \infty$. More recently, Berzunza \cite{Ber2015} has shown for preferential attachment trees and $m$-ary random increasing trees (the latter includes the case of binary search trees) that the fluctuations of the size of the giant component in the percolation regime (\ref{eq9}) are also described by the continuous Luria-Delbr\"uck distribution.

On the other hand, the continuous Luria-Delbr\"uck distribution has been further observed in several weak limit theorems for the number of cuts required to isolate the root of a tree; see the original work of Meir and Moon \cite{Meir1970}. For random recursive tree (Drmota et al.\ \cite{Drmota2009}, Iksanov and Möhle \cite{Iksanov2007}), random binary search tree (Holmgren \cite{Holmgren2010}) and split trees (Holmgren \cite{Holm2011}). We refer to \cite{cai20192} and \cite{cai2018} for a generalization of the Meir and Moon cutting model where similar results appears.  

We now state the central results of this work. 

\begin{theorem} \label{Theo2}
Suppose that Condition \ref{Cond1} holds and that $\ln V_{1}$ is non-lattice. As $n \rightarrow \infty$, there is the convergence in distribution 
\begin{eqnarray*}
 \left(\frac{\hat{G}_{n}}{n} -e^{-\frac{c}{\mu}}  \right)\ln n - \frac{c}{\mu}e^{-\frac{c}{\mu}} \ln \ln n   \xrightarrow[ ]{d} - \frac{c}{\mu} e^{- \frac{c}{\mu}} \left( Z + \ln \left( \frac{c}{\mu} \right) + \varsigma \mu  + \frac{(\mu^{2}- \sigma^{2})(c+\mu)}{2\mu^{2}}  - \gamma + 1 \right),
\end{eqnarray*}

\noindent where $\mu$ and $\sigma^{2}$ are the constants defined in (\ref{eq56}), $\varpi \equiv \varsigma$ (a constant) is defined in (\ref{eq10}), $\gamma$ is the Euler constant and the variable $Z$ has the continuous Luria-Delbr\"uck distribution. 
\end{theorem}

Similarly, we obtain that the fluctuations of $G_{n}$ are also described by $Z$. 

\begin{theorem} \label{Theo1}
Suppose that Condition \ref{Cond1} and \ref{Cond2} hold. As $n \rightarrow \infty$, there is the convergence in distribution
\begin{eqnarray*}
 \left(\frac{G_{n}}{n} - \alpha e^{-\frac{c}{\mu}}  \right)\ln n - \frac{c\alpha}{\mu}e^{-\frac{c}{\mu}} \ln \ln n \xrightarrow[ ]{d} - \frac{c \alpha}{\mu} e^{- \frac{c}{\mu}} \left( Z + \ln \left( \frac{c}{\mu} \right) + \frac{ \zeta  \mu}{\alpha} + \frac{(\mu^{2}- \sigma^{2})(c+\mu)}{2\mu^{2}}- \gamma + 1 \right),
\end{eqnarray*}

\noindent where $\mu$ and $\sigma^{2}$ are the constants defined in (\ref{eq56}), $\alpha$ is defined in Condition {\ref{Cond2}}, $\zeta$ is defined in (\ref{eq5}), $\gamma$ is the Euler constant and the variable $Z$ has the continuous Luria-Delbr\"uck distribution. 
\end{theorem}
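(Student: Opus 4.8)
The plan is to establish Theorem~\ref{Theo1} by running the proof of Theorem~\ref{Theo2} with the number of vertices in place of the number of balls. As for $\hat{G}_{n}$, I would start from the combinatorial decomposition of the root cluster: after percolation every vertex of $T_{n}^{{\rm sp}}$ lies either in the root cluster or in the subtree hanging below the \emph{first} removed edge on its ancestral line, so that, with $E_{u}$ the event that the edge above $u$ is removed while every edge strictly above it is kept,
\[
N - G_{n} \;=\; \sum_{u\,:\,d_{n}(u)\ge 1} N_{u}\,\mathbf{1}_{E_{u}}, \qquad \mathbb{P}\big(E_{u}\mid T_{n}^{{\rm sp}}\big) \;=\; (1-p_{n})\,p_{n}^{\,d_{n}(u)-1},
\]
where $N$ is the number of vertices of $T_{n}^{{\rm sp}}$ and $N_{u}$ is the number of vertices of the subtree rooted at $u$; the events $E_{u}$ are pairwise incompatible along ancestral lines, which is the source of the ``essential independence'' this decomposition provides. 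Since Condition~\ref{Cond2} and Remark~\ref{remark1} give $N = \alpha n + o_{\mathbb{P}}(n/\ln n)$, it suffices to study the fluctuations of $R_{n} := \sum_{u}N_{u}\mathbf{1}_{E_{u}}$.

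Working conditionally on $T_{n}^{{\rm sp}}$, one separates in $R_{n}$ the contribution of the \emph{large} pendant subtrees --- those with $n_{u}$ at least a small power of $n$, of which there are few --- from that of the \emph{small} ones, whose sum concentrates around its mean. Conditionally on $n_{u}$, the subtree rooted at $u$ is itself distributed as a split tree with $n_{u}$ balls and is independent of the percolation and of the part of the tree above $u$; hence by Remark~\ref{remark1}, $N_{u} = \alpha n_{u}(1 + o_{\mathbb{P}}(1))$ uniformly over the large pendant subtrees, so their contribution equals $\alpha$ times its ball-counterpart up to $o_{\mathbb{P}}(n/\ln n)$. By the exponential renewal estimate~(\ref{eq11}), the sizes of the large pendant subtrees together with their cut-probabilities $(1-p_{n})p_{n}^{d_{n}(u)-1}$ form a triangular array in the domain of attraction of a completely asymmetric Cauchy law, and Geluk and de Haan~\cite{Geluk2000} then identify the limit, after the usual logarithmic centering, as a continuous Luria--Delbr\"uck variable $Z$ with overall scale $\tfrac{c\alpha}{\mu}e^{-c/\mu}$. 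This is the same mechanism that produced $Z$ in Theorem~\ref{Theo2}, and it explains the factor $c\alpha/\mu$ here in place of $c/\mu$.

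The deterministic centering --- the $\ln\ln n$ term and the additive constant --- comes from the conditional mean $\mathbb{E}[N - G_{n}\mid T_{n}^{{\rm sp}}] = N - \sum_{u}p_{n}^{\,d_{n}(u)}$, whose asymptotics to precision $o(n/\ln n)$ I would extract from estimate~(\ref{eq5}) for the linear term of $\mathbb{E}[\Upsilon(T_{n}^{{\rm sp}})]$ --- this is the step at which the constant $\zeta$, rather than its ball-analogue in Theorem~\ref{Theo2}, enters --- together with the renewal asymptotics~(\ref{eq11}) and the second-order estimate~(\ref{eq33}), which produce the $\ln\ln n$-centering and the $(\mu^{2}-\sigma^{2})$-term; since Condition~\ref{Cond2} forces $\ln V_{1}$ to be non-lattice, one has $\phi\equiv 0$ and the periodic terms present in Theorem~\ref{Theo2} disappear. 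One also needs that $\sum_{u}p_{n}^{\,d_{n}(u)}$, a discounted path functional with bounded toll, concentrates at scale $o_{\mathbb{P}}(n/\ln n)$, so that the randomness of $T_{n}^{{\rm sp}}$ enters only through this mean. Assembling the pieces yields the stated convergence; equivalently, the same computation gives $G_{n} = \alpha\hat{G}_{n} - B_{n} + o_{\mathbb{P}}(n/\ln n)$ for an explicit deterministic sequence $B_{n} = c e^{-c/\mu}(\zeta - \alpha\varsigma)\,n/\ln n$, and then Theorem~\ref{Theo1} follows from Theorem~\ref{Theo2} once $B_{n}$ is identified.

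The main obstacle is twofold. First, the second-moment estimate that makes the events $E_{u}$ ``essentially independent'' and shows that the small pendant subtrees contribute only a concentrated bulk: the subtree sizes are heavy-tailed (a single large one is removed with probability of order $1/\ln n$, contributing a term of order $n$) and the $E_{u}$ for incomparable $u$ are genuinely positively correlated, so both effects must be quantified carefully --- this is precisely where the \emph{quantitative} rates of Condition~\ref{Cond2} (the $O(n(\ln n)^{-1-\varepsilon})$ error, rather than Holmgren's~\cite{Holm2012} weaker $o(n)$) and of Remark~\ref{remark1} ($\mathrm{Var}(N) = o(n^{2}(\ln n)^{-2-2\varepsilon})$) are needed, both to compare $N_{u}$ with $\alpha n_{u}$ uniformly and to render $N$ negligible at scale $n/\ln n$. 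Second, tracking all the constants through the renewal computation of $\mathbb{E}[\sum_{u}p_{n}^{\,d_{n}(u)}]$ to $o(n/\ln n)$ precision, combining (\ref{eq5}) with (\ref{eq33}), in order to pin down the exact additive constant in the limit.
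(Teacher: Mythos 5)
Your proposal lands on the same reduction the paper actually uses: everything hinges on the comparison $G_{n} = \alpha \hat{G}_{n} - c\,e^{-c/\mu}(\zeta-\alpha\varsigma)\,n/\ln n + o_{\mathbb{P}}(n/\ln n)$ (your $B_{n}$ is exactly the shift in Lemma \ref{lemma17}, with the correct sign), after which Theorem \ref{Theo1} follows from Theorem \ref{Theo2} in its non-lattice form, and your bookkeeping of how $\varsigma\mu$ gets replaced by $\zeta\mu/\alpha$ in the limit constant is right. What differs is the route to that comparison. The paper recycles the Theorem \ref{Theo2} machinery wholesale: it cuts at height $m_{n}=\lfloor\beta\log_{b}\ln n\rfloor$, proves the vertex analogues of Lemmas \ref{lemma6} and \ref{lemma8} (Lemmas \ref{lemma15} and \ref{lemma16}), where Bertoin's identity $\mathbb{E}_{n_{i}}[N_{i}^{-1}C_{n,i}]=\mathbb{E}_{n_{i}}[p_{n}^{d_{n_{i}}(u_{i})}]$ together with the vertex-depth moment estimates of Lemma \ref{lemma1} and the quantitative Condition \ref{Cond2}/Remark \ref{remark1} bounds replace $N_{i}$ by $\alpha n_{i}$, and then subtracts the two expansions term by term; the conditioning at height $m_{n}$ is what makes the cluster contributions conditionally independent, so the concentration step is a short second-moment computation. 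You instead work with the exact first-removed-edge identity $N-G_{n}=\sum_{u}N_{u}\mathbf{1}_{E_{u}}$ over all vertices, split large from small pendant subtrees, and extract the centering from $\mathbb{E}[G_{n}\mid T_{n}^{{\rm sp}}]=\sum_{u}p_{n}^{d_{n}(u)}$. That is a legitimate and conceptually cleaner starting point (no boundary term $\hat{C}_{n,0}$, no truncation at $m_{n}$), but it pushes the difficulty into exactly the places you flag: the $E_{u}$ for incomparable $u$ are positively correlated and the subtree sizes are heavy-tailed, so the concentration of the small-subtree bulk and of the discounted profile $\sum_{u}p_{n}^{d_{n}(u)}$ at scale $o_{\mathbb{P}}(n/\ln n)$ requires the same kind of variance work the paper's conditioning avoids; moreover, expanding $\mathbb{E}\bigl[\sum_{u}p_{n}^{d_{n}(u)}\bigr]$ to precision $o(n/\ln n)$ needs not only (\ref{eq5}), (\ref{eq11}) and (\ref{eq33}) but also the second- and third-moment depth estimates for a uniform vertex (Lemma \ref{lemma1} (ii)--(iii)), which is where Condition \ref{Cond2} and Remark \ref{remark1} really enter. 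Finally, note that your primary narrative (re-running the triangular-array/Cauchy argument directly for vertices) would duplicate the content of Theorem \ref{Theo3} and its appendix; the efficient version of your plan is precisely your closing sentence, which coincides with the paper's proof via Lemma \ref{lemma17}.
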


We also show that Theorem \ref{Theo2} can essentially be extended to the case when $\ln V_{1}$ is lattice. More precisely, we consider the following additional condition. Write $y = \lfloor y \rfloor + \{ y \}$ for the decomposition of a real number $y$ as the sum of its integer and fractional parts. 

\begin{Condition} \label{NewCond1}
Let $T_{n}^{\rm sp}$ be a split tree with cardinality $n$ and span $d >0$ defined in (\ref{eq51}). Furthermore, suppose that $\{d^{-1} \ln \ln n\} \rightarrow \varrho \in [0,1)$, as $n \rightarrow \infty$. 
\end{Condition}

We  introduce for every $\varrho \in [0,1)$ and $c, d, x > 0$,
\begin{eqnarray*}
\bar{\Xi}_{\varrho}^{c,d}(x) = \frac{c}{\mu}\frac{d}{1-e^{-d}} e^{d \lfloor \varrho -d^{-1}\ln x - d^{-1}c/\mu \rfloor - d \varrho}, 
\end{eqnarray*} 

\noindent where $\mu$ is the constant defined in (\ref{eq56}).
The function $\bar{\Xi}_{\varrho}^{c,d}$ decreases as $x \rightarrow \infty$ and it can be viewed as the tail of a measure $\Xi_{\varrho}^{c,d}$ on $(0, \infty)$. It is not difficult to see that this measure fulfills the integral condition $\int_{(0, \infty)} (1 \wedge x^{2}) \Xi_{\varrho}^{c,d}({\rm d} x) < \infty$. This enables us to introduce a L\'evy process without negative jumps $Z_{\varrho}^{c,d} = (Z_{\varrho}^{c,d}(t))_{t \geq 0}$ with Laplace exponent
\begin{eqnarray*}
\Phi_{\varrho}^{c,d}(a) = \int_{(0, \infty)} (e^{-ax} - 1 + ax \mathds{1}_{\{x < 1 \}}) \Xi_{\varrho}^{c,d}({\rm d} x),
\end{eqnarray*} 

\noindent i.e., $\mathbb{E}[e^{-aZ_{\varrho}^{c,d}(t)}] = e^{t \Phi_{\varrho}^{c,d}(a)}$, for  $a \geq 0$. 

\begin{theorem} \label{NewTheo2}
Suppose that Condition \ref{Cond1} holds and that $T_{n}^{\rm sp}$ satisfies Condition \ref{NewCond1}. For any constant $\theta > 0$,  as $n \rightarrow \infty$, there is the convergence in distribution 
\begin{align*}
&  \left(\frac{\hat{G}_{n}}{n} -e^{-\frac{c}{\mu}}  \right)\ln n - \frac{c}{\mu}e^{-\frac{c}{\mu}} \ln \ln n + ce^{-\frac{c}{\mu}} \left( \varpi(\ln n) - \phi \left(\ln \left( \theta^{-1} e^{-\frac{c}{\mu}} \ln n \right) \right) \right) \\
& \hspace*{15mm} \xrightarrow[ ]{d} -Z_{\varrho}^{c,d}(1)  - \frac{c}{\mu} e^{- \frac{c}{\mu}} \left(  \frac{c}{\mu}  + \frac{(\mu^{2}- \sigma^{2})(c+\mu)}{2\mu^{2}} \right),
\end{align*}

\noindent where $\mu$ and $\sigma^{2}$ are the constants defined in (\ref{eq56}), $\varpi$ is the function defined in (\ref{eq10}), $\phi$ is the function defined in (\ref{eq33}), $\varrho$ is defined in Condition \ref{NewCond1} and $\gamma$ is the Euler constant.
\end{theorem}

\begin{remark}
Following Bertoin \cite{Be1}, we point out that Lemmas \ref{lemma4} and \ref{lemma3} still hold whenever the percolation parameter satisfies $p_{n} = 1 -c \ln^{-1} n+ o ( \ln^{-1} n)$, where $c > 0$ is fixed, which still falls in the supercritical regime; see \cite[Theorem 1]{Be1}. However, to obtain similar results to those in Theorems \ref{Theo2}, \ref{Theo1} and \ref{NewTheo2} one needs to know more information of the $o( \ln^{-1} n)$ term.
\end{remark}

It is important to remark that the constants appearing in our main results can be computed explicitly for some types of trees. For example, if $T_{n}^{{\rm bst}}$ is a binary search tree with $n$ vertices (recall Remark \ref{remark2}), then $N =n$, $\alpha =1$, $\mu = 1/2$, $\sigma^{2} = 1/4$,  $\zeta = \varsigma = 2 \gamma -4$ and $\phi \equiv 0$; see for example \cite{Hibb1962}. Moreover, the result in Theorem \ref{Theo2} (or Theorem \ref{Theo1}) applied to $T_{n}^{{\rm bst}}$ coincides with Berzunza \cite[Theorem 1.1]{Ber2015}. The value of the constant can also be computed, for instance, for quad trees or for $m$-ary search trees; we refer to \cite{Nein1999} and \cite{Mah1986}, respectively, for details.

The approach used by Schweinsberg \cite{Schweinsberg2012} for recursive trees relies on its
connection with the Bolthausen-Sznitman coalescent founded by Goldschmidt and Martin \cite{Golds2005}
and the estimation of the rate of decrease of the number of blocks in such coalescent process. The
alternative approach of Bertoin \cite{Be2} makes use of the special properties of recursive trees
(namely the splitting property) and more specifically of a coupling due to Iksanov and Möhle
\cite{Iksanov2007} connecting the Meir and Moon \cite{Meir1970} algorithm for the isolation of the
root with a certain random walk in the domain of attraction of the completely asymmetric Cauchy
process. This clearly fails for split-trees. On the other hand, the basic idea of Berzunza
\cite{Ber2015} for the case of $m$-ary random increasing trees and preferential attachment trees is
based in the close relation of these trees with Markovian branching processes and the dynamical
incorporation of percolation as neutral mutations. Roughly speaking, this yields to the analysis of
the asymptotic behavior of branching processes subject to rare neutral mutations. The relationship
between percolation on trees and branching process with mutations was first observed by Bertoin and
Uribe Bravo \cite{Uribe2015}. Recently, Holmgren and Janson \cite{HoJa2017} have shown that some
kinds of split trees (but not all) can be related to genealogical trees of general age-dependent
branching processes (or Crump-Mode-Jagers processes), for instance, $m$-ary search trees and
median-of-$(2\ell + 1)$ trees. Furthermore, Berzunza \cite{Berzunza2018} has proven the existence of
a giant percolation cluster for appropriate regimes of such genealogical trees via a similar
relationship with a general branching process with mutations. However, the branching processes with
mutations in \cite{Berzunza2018} is in general not Markovian due to the nature of the
Crump-Mode-Jagers processes; see \cite{Ja1975}. This makes the idea of \cite{Ber2015} difficult to
implement since there the Markov property is crucial. We thus have to use here a fairly different
route.

The method used here is inspired in the original technique developed by Janson \cite{Jans2004} to study the number of cuts needed to isolate the root of complete binary trees with the cutting-down procedure of Meir and Moon \cite{Meir1970}. Holmgren \cite{Holmgren2010, Holm2011} has successfully extended this method to study the same quantity as in \cite{Jans2004} for split trees. Informally speaking, we approximate $\hat{G}_{n}$ (resp. $G_{n}$) by the sum of the ``sizes'' of the percolation clusters of the sub-trees rooted at vertices that are at a distance around $ \ln \ln n$ from the root. There are approximately $b^{\ln \ln n}$ clusters, but we only consider those that are still connected to the root of $T_{n}^{{\rm sp}}$ after performing percolation for the regime $p_{n}$ as in (\ref{eq9}). The number of balls (or number of vertices) between the root of $T_{n}^{{\rm sp}}$ and the the vertices at height $\ln \ln n$ is equal to $O(\ln n)$ and thus they do not contribute to the fluctuations of $\hat{G}_{n}$ (resp. $G_{n}$). We then analyze carefully the ``sizes'' of percolation clusters at distances close to $ \ln \ln n$ from the root, and essentially, we view $\hat{G}_{n}$ (resp. $G_{n}$) as a sum of independent random variables. This will allow us to apply a classical limit theorem for the convergence of triangular arrays to get our main result. Therefore, we conclude that most of the random fluctuations can be explained by the ``sizes'' of percolation clusters at distances close to $\ln \ln n$ from the root of $T_{n}^{{\rm sp}}$ and that they are still connected to the root. It is important to point out, as well as an inspiration, that this phenomenon has been observed by Bertoin \cite[Section 3]{Be2} in a similar setting where he studied the fluctuations of the number of vertices at height $\ln \ln n$ which has been disconnected from the root in $b$-regular trees after performing supercritical percolation. The fluctuations in this setting are described by a L\'evy process without negative jumps that also appears in \cite{Jans2004}.

The rest of this paper is organized as follows: 
We start by proving Lemma
\ref{lemma4} and Lemma \ref{lemma3} in Section \ref{secproflemma}. In Section \ref{sec2}, we then focus on the proof of Theorem \ref{Theo2} and Theorem \ref{NewTheo2}. Section \ref{sec4} is devoted to the proof
of Theorem \ref{Theo1} which follows essentially from Theorem \ref{Theo2}. In Section
\ref{sec6}, we briefly point out that the present approach also applies to study the fluctuations of
the size of the giant cluster for percolation on regular trees. The appendices provide details on 
some technical results that are used in the proofs of the main result but that we decided to postpone for a better understanding of our approach. In particular, Appendix \ref{sec3} is dedicated to investigate the asymptotic behavior of distances between uniformly chosen vertices and uniformly chosen balls in $T_{n}^{{\rm sp}}$ which may be of independent interest.

\section{Proof of Lemma \ref{lemma4} and Lemma \ref{lemma3}} \label{secproflemma}

Lemma \ref{lemma4} and Lemma \ref{lemma3} are a merely consequence of the results of Bertoin \cite{Be1} after mild modifications.

\begin{proof}[Proof of Lemma \ref{lemma4}]
The result follows from exactly the same argument as the proof of \cite[Corollary 1 and Proposition 1]{Be1} by using Lemma \ref{lemma5}, Corollary \ref{cor2} in Appendix \ref{sec3} and by taking into account that the size is defined as the number of balls instead of the number of vertices.
\end{proof} 

\begin{proof}[Proof of Lemma \ref{lemma3}]
The result follows from a simple application of \cite[Corollary 1]{Be1}. Note that conditions ($\mathbf{H}_{k}$) and ($\mathbf{H}_{k}^{\prime}$), for $k=1,2$, in \cite[Corollary 1]{Be1} are verified in Lemma \ref{lemma1} and Corollary \ref{cor2} in Appendix \ref{sec3}. Therefore, in the percolation regime (\ref{eq9}), we have that $\lim_{n \rightarrow \infty} N^{-1} G_{n} =  e^{-\frac{c}{\mu} }$,  in probability. On the other hand, Conditions \ref{Cond1} and \ref{Cond2} imply that $\lim_{n \rightarrow \infty} N/n = \alpha$, in probability. This establishes (\ref{eq2}) in Lemma \ref{lemma3}. The uniqueness of the giant component follows from \cite[Proposition 1]{Be1} by noticing that the condition there is satisfied as a consequence of Lemma \ref{lemma1} and Corollary \ref{cor2} in Appendix \ref{sec3}, that is, we have the joint convergence
\begin{eqnarray*}
\lim_{n \rightarrow \infty} \frac{1}{\ln n} (d_{n}(u_{1}), d_{n}(u_{1}, u_{2}))=\left(1/\mu,
    2/\mu \right), \hspace*{5mm} \text{in probability},
\end{eqnarray*} 

\noindent where $u_{1}, u_{2}$ are two i.i.d.\ uniform random vertices in $T_{n}^{{\rm sp}}$,  $d_{n}(u_{1})$ denotes the height of $u_{1}$ and $d_{n}(u_{1}, u_{2})$ is the number of edges of $T_{n}^{{\rm sp}}$ which are needed to connect the root and the vertices $u_{1}$ and $u_{2}$.
\end{proof}

\section{Proof of Theorem \ref{Theo2}} \label{sec2}

This section is devoted to the proofs of Theorem \ref{Theo2} and Theorem \ref{NewTheo2} along the lines explained at the end of Section \ref{sec5}. The starting point is Lemma \ref{lemma6} where we estimate the number of balls of the percolation clusters of sub-trees rooted at vertices that are around height $\ln \ln n$. We continue with Lemmas \ref{lemma8}, \ref{lemma9} and \ref{lemma10} that allow us to approximate $\hat{G}_{n}$ as essentially a sum of independent random variables. Finally, we establish Theorem \ref{Theo3} that shows that the conditions of \cite[Theorem 15.28]{Kall2002}, a classical limit theorem for triangular arrays, are fulfilled which allow us to conclude with the proof of Theorem \ref{Theo2}.

For a vertex $v \in T_{n}^{{\rm sp}}$ that is at height $d_{n}(v) = j$, it is not difficult to see
from the definition of random split trees in Section \ref{sec1} that conditioning on the split
vectors, we have 
\begin{eqnarray} \label{eq19}
    \text{binomial}\left(n, \prod_{k=1}^{j} W_{v,k}  \right) - sj \leq_{\text{st}} n_{v}
    \leq_{\text{st}}
\text{binomial}\left(n, \prod_{k=1}^{j} W_{v,k} \right) + s_{1}j,
\end{eqnarray}

\noindent where \(\le_{\text{st}}\) denotes \emph{stochastically dominated by and}
$(W_{v,k}, k=1, \dots, j)$ are i.i.d.\ random variables on $[0,1]$ given by the split vectors
associated with the vertices in the unique path from $v$ to the root; This property has been used in
\cite{Luc1999} and \cite{Holm2012}. In particular $W_{v,k} = V_{1}$ in distribution. We deduce the
following important estimates.
\begin{eqnarray} \label{eq17}
E[n_{v}] \leq n  \prod_{k=1}^{j} \mathbb{E}[W_{v,k}] + s_{1}j = n b^{-j} + s_{1}j, 
\end{eqnarray}

\noindent where we have used $\mathbb{E}[W_{v,k}] = \mathbb{E}[V_{1}] = 1/b$. Moreover, 
\begin{eqnarray} \label{eq7}
E[n_{v}^{2}]  \leq  n^{2}  \prod_{k=1}^{j} \mathbb{E}[W_{v,k}^{2}] + n\left(\prod_{k=1}^{j} \mathbb{E}[W_{v,k}] -  \prod_{k=1}^{j} \mathbb{E}[W_{v,k}^{2}] \right) + 2s_{1}jn \prod_{k=1}^{j} \mathbb{E}[W_{v,k}] + s_{1}^{2}j^{2}. 
\end{eqnarray}

\noindent Notice that $\mathbb{E}[W_{v,k}^{2}] = \mathbb{E}[V_{1}^{2}] < 1/b$.

We use the notation $\log_{b}x = \ln x/\ln b$ for the logarithm with base $b$ of $x > 0$, and we
write $m_{n} =  \lfloor \beta \log_{b} \ln n \rfloor$ for some constant $\beta >
-2/(1+\log_{b}\mathbb{E}[V_{1}^{2}])$. We further assume that $n$ is large enough such that $0 <
m_{n} < \ln n$. For $1 \leq i \leq b^{m_{n}}$, let $v_{i}$ be a vertex in $T_{n}^{{\rm sp}}$ at height
$m_{n}$ and let $n_{i}$ be the number of balls stored at the sub-tree rooted at $v_{i}$. In
particular,
\begin{eqnarray} \label{eq41}
E[n_{i}^{2}]  = n^{2} \mathbb{E}^{m_{n}}[V_{1}^{2}] +o 
( n^{2} \ln^{-k}n ) ,
\end{eqnarray}

\noindent for an arbitrary $k \geq 0$.

We denote by $\hat{C}_{n,i}$ the number of balls of the sub-tree of $T_{n}^{{\rm sp}}$ rooted at
$v_{i}$ after Bernoulli bond-percolation with parameter $p_{n}$. Clearly, $(\hat{C}_{n,i}, 1 \leq i \leq b^{m_{n}})$ are conditionally independent random variables given $(n_{i}, 1 \leq i \leq b^{m_{n}})$. We write $\mathbb{E}_{n_{i}}[\hat{C}_{n,i}] \coloneqq\mathbb{E}[\hat{C}_{n,i} | n_{i}]$, i.e., it is the conditional expected value of $\hat{C}_{n,i}$ given $n_{i}$. 

In the sequel, we shall often use the following notation $A_{n} = B_{n}+ o_{\rm p}(f(n))$, where $A_{n}$ and $B_{n}$ are two sequences of real random variables and $f: \mathbb{N} \rightarrow (0, \infty)$ a function, to indicate that $\lim_{n \rightarrow \infty} |A_{n} - B_{n}|/f(n) = 0$ in probability.  

\begin{lemma} \label{lemma6}
Suppose that Condition \ref{Cond1} is fulfilled. For $1 \leq i \leq b^{m_{n}}$, we have that
\begin{eqnarray*}
\mathbb{E}_{n_{i}}[\hat{C}_{n,i} ] = n_{i}e^{-\frac{c}{\mu}\frac{\ln n_{i}}{\ln n} } -  \frac{c^{2}\mu^{2} - c^{2} \sigma^{2}}{2\mu^{3}} \frac{n_{i} \ln n_{i}}{\ln^{2} n} e^{-\frac{c}{\mu}\frac{\ln n_{i}}{\ln n} } -  c  \frac{ n_{i} \varpi(\ln n_{i})}{\ln n}  e^{- \frac{c}{\mu}\frac{\ln n_{i}}{\ln n} } +  o\left(\frac{n_{i}}{\ln n} \right),
\end{eqnarray*}

\noindent where $\varpi:\mathbb{R} \rightarrow \mathbb{R}$ is the function in (\ref{eq10}). 
\end{lemma}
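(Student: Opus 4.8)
The plan is to express $\mathbb{E}_{n_i}[\hat C_{n,i}]$ through the depth profile of a split tree and then import the fine asymptotics of the expected total path length \eqref{eq10}, together with the leading‑order variance of the depth of a uniformly chosen ball. Condition on $\{n_i=m\}$. By the recursive description in Section~\ref{sec1}, the sub‑tree of $T_n^{{\rm sp}}$ rooted at $v_i$ is then a fresh split tree $T_m^{{\rm sp}}$ with the same parameters; the bond percolation on $T_n^{{\rm sp}}$ restricts to independent bond percolation with parameter $p_n$ on it, and $\hat C_{n,i}$ is the number of balls in the cluster of $v_i$. A ball at depth $d$ in $T_m^{{\rm sp}}$ lies in that cluster exactly when its $d$ ancestral edges survive, so by linearity of expectation
\begin{equation*}
\mathbb{E}_{n_i}[\hat C_{n,i}] = g(n_i), \qquad g(m) \coloneqq \mathbb{E}\Big[\sum_{j=1}^m e^{-\lambda_n D_m(j)}\Big], \qquad \lambda_n \coloneqq -\ln p_n = \frac{c}{\ln n} + \frac{c^2}{2\ln^2 n} + O\!\big(\tfrac{1}{\ln^3 n}\big),
\end{equation*}
where $D_m(j)$ is the depth of ball $j$ in $T_m^{{\rm sp}}$. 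It thus suffices to estimate $g(m)$ for deterministic $m\to\infty$ (uniformity over the range of $m$ relevant below being routine, and the case of bounded $m$ being trivial since $g(m)\le m$), and then substitute $m=n_i$.

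Since $\lambda_n D_m(j)\asymp \ln m/\ln n$ need not be small, a naive power‑series expansion of $e^{-\lambda_n D_m(j)}$ is useless; instead we centre at the mean depth $\bar D_m\coloneqq m^{-1}\mathbb{E}[\Psi(T_m^{{\rm sp}})]$. Factoring out $e^{-\lambda_n\bar D_m}$ and Taylor‑expanding the centred exponents, which satisfy $\lambda_n|D_m(j)-\bar D_m|=O(1)$ on the overwhelming event $\{\operatorname{height}(T_m^{{\rm sp}})=O(\ln m)\}$, gives
\begin{equation*}
g(m) = e^{-\lambda_n \bar D_m}\Big( m + \tfrac{\lambda_n^2}{2}\, m\,\mathrm{Var}_*(D_m) + O\!\big(\lambda_n^3\, m\,\mathbb{E}|D_m - \bar D_m|^3\big)\Big),
\end{equation*}
the linear term vanishing because $\mathbb{E}\big[\sum_j(D_m(j)-\bar D_m)\big]=0$; here $D_m$ is the depth of a uniformly chosen ball and $\mathrm{Var}_*$, $\mathbb{E}|\cdot|^3$ are over the tree and the ball. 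By \eqref{eq10}, $\bar D_m=\mu^{-1}\ln m+\varpi(\ln m)+o(1)$. Using that $D_m$ coincides, up to an $O(1)$ correction that does not affect its first two cumulants at this precision, with the number of renewals before time $\ln m$ of the walk with increments $-\ln\hat V_1$ (where $\hat V_1$ is the size‑biased $V_1$, so that $-\ln\hat V_1$ has mean $\mu$ and variance $\sigma^2$ by \eqref{eq56} and tail $\mathbb{P}(-\ln\hat V_1>t)\le be^{-t}$), standard renewal theory yields $\mathrm{Var}_*(D_m)=\tfrac{\sigma^2}{\mu^3}\ln m+o(\ln m)$, $\mathbb{E}|D_m-\bar D_m|^3=o(\ln^2 m)$, and all higher cumulants of $D_m$ being $O(\ln m)$. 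Since $\ln m\le\ln n$, the error term above is $O(\lambda_n^3 m\ln^2 m)=o(m/\ln n)$, so $g(m)=m\exp\!\big(-\lambda_n\bar D_m+\tfrac{\sigma^2}{2\mu^3}\lambda_n^2\ln m\big)+o(m/\ln n)$.

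It remains to expand the exponent. Inserting the expansions of $\lambda_n$ and $\bar D_m$ and discarding cross‑terms via $\ln m\le\ln n$,
\begin{equation*}
-\lambda_n \bar D_m + \tfrac{\sigma^2}{2\mu^3}\lambda_n^2\ln m = -\frac{c}{\mu}\frac{\ln m}{\ln n} - \frac{c\,\varpi(\ln m)}{\ln n} - \frac{c^2(\mu^2 - \sigma^2)}{2\mu^3}\frac{\ln m}{\ln^2 n} + o\!\big(\tfrac{1}{\ln n}\big),
\end{equation*}
where the coefficient $-\tfrac{c^2(\mu^2-\sigma^2)}{2\mu^3}=-\tfrac{c^2}{2\mu}+\tfrac{c^2\sigma^2}{2\mu^3}$ combines the contribution $-\tfrac{c^2}{2\mu}\tfrac{\ln m}{\ln^2 n}$ of the second‑order part of $\lambda_n$ with the contribution $+\tfrac{c^2\sigma^2}{2\mu^3}\tfrac{\ln m}{\ln^2 n}$ of $\mathrm{Var}_*(D_m)$. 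Pulling out the genuinely $\Theta(1)$ factor $e^{-\frac{c}{\mu}\frac{\ln m}{\ln n}}$ and expanding $e^{x}=1+x+O(x^2)$ on the remaining $O(1/\ln n)$ part of the exponent gives
\begin{equation*}
g(m) = m\, e^{-\frac{c}{\mu}\frac{\ln m}{\ln n}} - c\,\frac{m\,\varpi(\ln m)}{\ln n}\, e^{-\frac{c}{\mu}\frac{\ln m}{\ln n}} - \frac{c^2(\mu^2 - \sigma^2)}{2\mu^3}\frac{m\ln m}{\ln^2 n}\, e^{-\frac{c}{\mu}\frac{\ln m}{\ln n}} + o\!\big(m/\ln n\big),
\end{equation*}
which is the claim after setting $m=n_i$.

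The main obstacle is the input from renewal theory in the second paragraph, namely $\mathrm{Var}_*(D_m)=\tfrac{\sigma^2}{\mu^3}\ln m+o(\ln m)$ together with the mild concentration bound $\mathbb{E}|D_m-\bar D_m|^3=o(\ln^2 m)$ (equivalently, sharp‑enough control of the second‑moment total path length and of the fluctuations of the depth of a uniform ball): this goes beyond Devroye's height estimate and relies on the renewal representation of the depth and the refined renewal estimates \eqref{eq11}--\eqref{eq33}. One must also verify that every error term is $o(m/\ln n)$ uniformly over the range of values that $n_i$ takes, noting that when $m$ is small the $\varpi$‑correction and the variance‑correction terms are themselves $o(m/\ln n)$ and are simply absorbed into the error.
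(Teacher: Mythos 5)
Your proposal is correct and follows essentially the same route as the paper: the identity $\mathbb{E}_{n_i}[\hat{C}_{n,i}] = n_i\,\mathbb{E}_{n_i}[p_n^{D_{n_i}(b_i)}]$ (Bertoin's observation), a second-order Taylor expansion in the depth controlled by a third absolute moment bound, and then the mean (with the $\varpi$ term from (\ref{eq10})), variance $\sigma^2\mu^{-3}\ln n_i$, and third-moment estimates for the depth of a uniform ball, which the paper collects as Lemma \ref{lemma5} in Appendix \ref{sec3}. The only cosmetic difference is that you center at the exact mean depth and keep the correction in the exponent, whereas the paper centers at $\mu^{-1}\ln n_i$ and lets the $\varpi$ term enter through the linear term; the resulting constants agree.
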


\begin{proof}
For $1 \leq i \leq b^{m_{n}}$, let $T_{i}$ be the sub-tree of $T_{n}^{{\rm sp}}$ rooted at the vertex $v_{i}$ at height $m_{n}$. Let $b_{i}$ be
an uniformly chosen ball in $T_{i}$. Let $D_{n_{i}}(b_{i})$ be the height of $b_{i}$ in the sub-tree $T_{i}$. We have the following key observation made by Bertoin \cite[Proof of Theorem 1]{Be1},
\begin{eqnarray} \label{eq13}
\mathbb{E}_{n_{i}}\left[ n_{i}^{-1} \hat{C}_{n,i} \right] = \mathbb{E}_{n_{i}}\left[ p_{n}^{D_{n_{i}}(b_{i})} \right].
\end{eqnarray}

\noindent In words, the left-hand side can be interpreted as the probability that $b_{i}$ belongs to
the percolation cluster containing the root of $T_{i}$, i.e., $v_{i}$, while the right-hand side can be interpreted as the probability that no edge has been removed in the path between $b_{i}$ and $v_{i}$.

We assume for a while that
\begin{align} \label{eq14}
& \mathbb{E}_{n_{i}}\left[ p_{n}^{D_{n_{i}}(b_{i})} \right] \nonumber \\
& \hspace*{5mm} = \mathbb{E}_{n_{i}}\left[  p_{n}^{\frac{ \ln n_{i}}{\mu}}\left(1 +  \left(D_{n_{i}}(b_{i}) - \frac{ \ln n_{i}}{\mu} \right)\ln p_{n} + \frac{1}{2}\left(D_{n_{i}}(b_{i}) - \frac{ \ln n_{i}}{\mu} \right)^{2}\ln^{2} p_{n}\right) \right] + o\left(\frac{1}{\ln n} \right).
\end{align} 

\noindent We next note from our assumption (\ref{eq9}) in the percolation parameter that
\begin{eqnarray} \label{eq16}
\ln p_{n} = -\frac{c}{\ln n} +o\left(\frac{1}{\ln n} \right) \hspace*{5mm} \text{and} \hspace*{5mm}
p_{n}^{\frac{ \ln n_{i}}{\mu}} = e^{-\frac{c}{\mu}\frac{\ln n_{i}}{\ln n} } - \frac{c^{2}}{2\mu}
\frac{\ln n_{i}}{\ln^{2} n} e^{-\frac{c}{\mu}\frac{\ln n_{i}}{\ln n} }+  o\left(\frac{1}{\ln n}
\right).
\end{eqnarray}

\noindent We have used that $\ln n_{i} \leq \ln n$. Then it follows from Lemma \ref{lemma5} (i)-(ii)
in Appendix \ref{sec3} and a couple of lines of calculations that
\begin{eqnarray*}
\mathbb{E}_{n_{i}}\left[ p_{n}^{D_{n_{i}}(b_{i})} \right] = e^{-\frac{c}{\mu}\frac{\ln n_{i}}{\ln n} } - \frac{c^{2} \mu^{2} - c^{2} \sigma^{2}}{2\mu^{3}} \frac{\ln n_{i}}{\ln^{2} n} e^{-\frac{c}{\mu}\frac{\ln n_{i}}{\ln n} } -  c   \frac{\varpi(\ln n_{i})}{\ln n} e^{-\frac{c}{\mu}\frac{\ln n_{i}}{\ln n} }  + o\left(\frac{1}{\ln n} \right).
\end{eqnarray*} 

\noindent Therefore, the result in Lemma \ref{lemma6} follows from the identity  (\ref{eq13}) and the above estimation.

Now, we focus on establishing (\ref{eq14}). From the inequality 
\begin{align*}
& \left| p_{n}^{D_{n_{i}}(b_{i})} - p_{n}^{\frac{ \ln n_{i}}{\mu}}\left(1 +  \left(D_{n_{i}}(b_{i}) - \frac{ \ln n_{i}}{\mu} \right)\ln p_{n} + \frac{1}{2}\left(D_{n_{i}}(b_{i}) - \frac{ \ln n_{i}}{\mu} \right)^{2}\ln^{2} p_{n}\right) \right| \\
& \hspace*{100mm} \leq \left|\left( D_{n_{i}}(b_{i}) - \frac{ \ln n_{i}}{\mu} \right) \ln p_{n} \right|^{3}, 
\end{align*}

\noindent we conclude that it is enough to show that 
\begin{eqnarray*}
\mathbb{E}_{n_{i}} \left[ \left|\left( D_{n_{i}}(b_{i}) - \frac{ \ln n_{i}}{\mu} \right) \ln p_{n} \right|^{3}\right] = o\left(\frac{1}{\ln n} \right)
\end{eqnarray*}

\noindent in order to obtain (\ref{eq14}). But this follows from Lemma \ref{lemma5} (iii) in Appendix \ref{sec3} and (\ref{eq16}).
\end{proof}

Let $\eta_{n,i}$ be the total number of edges on the branch from $v_{i}$ to the root which have been deleted after percolation with parameter $p_{n}$. Notice that the random variable $\eta_{n,i}$ has the binomial distribution with parameters $(m_{n}, 1-p_{n})$. But the random variables $(\eta_{n,i}, 1 \leq i \leq b^{m_{n}})$ are not independent. On the other hand, we remark that $\eta_{n,i} = 0$ if and only if the vertex $v_{i}$ is still connected to the root. 

\begin{lemma} \label{lemma8}
Suppose that Condition \ref{Cond1} is fulfilled. We have for $\beta > -2/(1+\log_{b}\mathbb{E}[V_{1}^{2}])$ that
\begin{eqnarray*}
\hat{G}_{n} =   \sum_{i=1}^{b^{m_{n}}} \mathbb{E}_{n_{i}}[\hat{C}_{n,i}] \mathds{1}_{\left \{ \eta_{n,i} = 0 \right \}}  + o_{\rm p}\left(\frac{n}{ \ln n}\right).
\end{eqnarray*}
\end{lemma}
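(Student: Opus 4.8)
The plan is to decompose the root cluster of $T_n^{{\rm sp}}$ after percolation according to the level-$m_n$ vertices $v_1,\dots,v_{b^{m_n}}$. Every ball in the root cluster is either stored in the ``top part'' of the tree (the vertices at height $< m_n$, together with the edges connecting them) or else lies in one of the sub-trees $T_i$ rooted at a $v_i$ that is \emph{still} connected to the root, i.e.\ with $\eta_{n,i}=0$. Writing $\hat C_{n,i}$ for the number of balls of $T_i$ in the cluster of $v_i$ after percolation, this gives the exact identity
\begin{eqnarray*}
\hat G_n = R_n + \sum_{i=1}^{b^{m_n}} \hat C_{n,i}\,\mathds{1}_{\{\eta_{n,i}=0\}},
\end{eqnarray*}
where $R_n$ counts the balls stored strictly above level $m_n$ that are in the root cluster. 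First I would dispose of $R_n$: the total number of balls stored above level $m_n$ is at most $\sum_{j=0}^{m_n-1} b^j\cdot s = O(b^{m_n}) = O((\ln n)^{\beta \log_b b}) = O((\ln n)^{\beta})$ — wait, more carefully, the number of \emph{internal} vertices above level $m_n$ is $O(b^{m_n})$ and each holds $s_0 \le s$ balls, so $R_n = O(b^{m_n}) = o(n/\ln n)$ deterministically since $b^{m_n} \le (\ln n)^{\beta}$ grows only polylogarithmically while $n/\ln n$ grows nearly linearly. So $R_n = o_{\rm p}(n/\ln n)$ and can be absorbed into the error term.

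It remains to replace $\sum_i \hat C_{n,i}\mathds{1}_{\{\eta_{n,i}=0\}}$ by $\sum_i \mathbb{E}_{n_i}[\hat C_{n,i}]\mathds{1}_{\{\eta_{n,i}=0\}}$ with an $o_{\rm p}(n/\ln n)$ error; that is, I must show
\begin{eqnarray*}
X_n := \sum_{i=1}^{b^{m_n}} \bigl(\hat C_{n,i} - \mathbb{E}_{n_i}[\hat C_{n,i}]\bigr)\mathds{1}_{\{\eta_{n,i}=0\}} = o_{\rm p}\!\left(\frac{n}{\ln n}\right).
\end{eqnarray*}
The natural tool is a second-moment (Chebyshev) argument, conditioning on the split vectors and the ball-counts $(n_i)$ and on the percolation status of the top edges, i.e.\ on the $\sigma$-field $\mathcal{F}_n$ generated by $(n_i,\ 1\le i\le b^{m_n})$ and $(\eta_{n,i},\ 1\le i\le b^{m_n})$. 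Given $\mathcal{F}_n$, the variables $\hat C_{n,i}$ are independent across $i$ (the percolation inside distinct sub-trees is independent), and each centered summand has conditional mean zero, so
\begin{eqnarray*}
\mathbb{E}[X_n^2 \mid \mathcal{F}_n] = \sum_{i=1}^{b^{m_n}} \mathrm{Var}\bigl(\hat C_{n,i}\mid n_i\bigr)\mathds{1}_{\{\eta_{n,i}=0\}} \le \sum_{i=1}^{b^{m_n}} \mathbb{E}[\hat C_{n,i}^2 \mid n_i] \le \sum_{i=1}^{b^{m_n}} n_i^2,
\end{eqnarray*}
using $\hat C_{n,i} \le n_i$. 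Taking expectations and invoking the estimate $\mathbb{E}[n_i^2] = n^2\mathbb{E}^{m_n}[V_1^2] + o(n^2\ln^{-k}n)$ from \eqref{eq41}, and $b^{m_n} \le (\ln n)^{\beta}$, gives
\begin{eqnarray*}
\mathbb{E}[X_n^2] \le b^{m_n}\Bigl(n^2\mathbb{E}^{m_n}[V_1^2] + o(n^2\ln^{-k}n)\Bigr) = n^2\, b^{m_n}\mathbb{E}^{m_n}[V_1^2] + o(n^2).
\end{eqnarray*}
Now $b^{m_n}\mathbb{E}^{m_n}[V_1^2] = \exp\bigl(m_n(\ln b + \ln \mathbb{E}[V_1^2])\bigr) = \exp\bigl(m_n\ln b\,(1+\log_b\mathbb{E}[V_1^2])\bigr)$; with $m_n = \lfloor \beta\log_b\ln n\rfloor$ this is of order $(\ln n)^{\beta(1+\log_b\mathbb{E}[V_1^2])}$. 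The whole point of the lower bound $\beta > -2/(1+\log_b\mathbb{E}[V_1^2])$ (note $1+\log_b\mathbb{E}[V_1^2] < 0$ since $\mathbb{E}[V_1^2] < 1/b$) is precisely that it forces $\beta(1+\log_b\mathbb{E}[V_1^2]) < 2$, hence $\mathbb{E}[X_n^2] = O(n^2(\ln n)^{\beta(1+\log_b\mathbb{E}[V_1^2])}) = o(n^2/\ln^2 n)$. By Chebyshev's inequality, $X_n = o_{\rm p}(n/\ln n)$, which together with the bound on $R_n$ completes the proof.

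The main obstacle — and the reason the precise lower bound on $\beta$ appears in the statement — is the competition between the number of sub-trees, $b^{m_n}$, and the decay of the second moments $\mathbb{E}[n_i^2] \sim n^2\mathbb{E}^{m_n}[V_1^2]$: both are polylogarithmic in $n$, and one needs $m_n$ chosen large enough (the constraint on $\beta$) that their product, after multiplying by $n^2$, is $o(n^2/\ln^2 n)$. A secondary technical point to handle cleanly is the conditioning: one must check that conditionally on $\mathcal{F}_n$ the clusters $\hat C_{n,i}$ are genuinely independent with the stated conditional means, which follows because the edges inside the distinct sub-trees $T_i$ are disjoint and percolation acts independently on them, and because $\mathds{1}_{\{\eta_{n,i}=0\}}$ is $\mathcal{F}_n$-measurable. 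One should also make sure the stochastic-domination bounds \eqref{eq19} are not actually needed here beyond the crude $\hat C_{n,i}\le n_i$ and the moment estimate \eqref{eq41}.
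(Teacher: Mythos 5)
Your proposal is correct and follows essentially the same route as the paper's proof: the same decomposition into the top part plus the level-$m_{n}$ sub-tree clusters, the same conditioning on $(n_{i})$ and $(\eta_{n,i})$ followed by a second-moment/Chebyshev argument using $\hat{C}_{n,i}\le n_{i}$ and the estimate (\ref{eq41}), and the same role for the lower bound on $\beta$. Only note a sign slip: the hypothesis forces $\beta(1+\log_{b}\mathbb{E}[V_{1}^{2}])<-2$, not $<2$, which is exactly what gives $\mathbb{E}[X_{n}^{2}]=o(n^{2}/\ln^{2}n)$ (and the $o(n^{2}\ln^{-k}n)$ remainder should be kept with $k$ large rather than weakened to $o(n^{2})$).
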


\begin{proof}
We denote by $\hat{C}_{n,0}$ the number of balls in the vertices of $T_{n}^{{\rm sp}}$ at height less or equal to $m_{n}-1$ that are connected to
the root after percolation with parameter $p_{n}$. Then, it should be plain that
\begin{eqnarray*}
\hat{G}_{n} = \hat{C}_{n,0} + \sum_{i=1}^{b^{m_{n}}} \hat{C}_{n,i} \mathds{1}_{\left \{ \eta_{n,i} = 0 \right \}}.
\end{eqnarray*}

\noindent We observe that the sequences of random variables $(\eta_{n,i}, 1 \leq i \leq b^{m_{n}})$ and $(\hat{C}_{n,i}, 1 \leq i \leq b^{m_{n}})$ are independent. Furthermore, the sequence of random variables $(\eta_{n,i}, 1 \leq i \leq b^{m_{n}})$ and $(n_{i}, 1 \leq i \leq b^{m_{n}})$ are also independent. Let $\mathcal{F}_{n}$ be the $\sigma$-field generated by $(\eta_{n,i}, 1 \leq i \leq b^{m_{n}})$ and $(n_{i}, 1 \leq i \leq b^{m_{n}})$. We also note that $\mathbb{E}[\hat{C}_{n,i} | \mathcal{F}_{n}] = \mathbb{E}_{n_{i}}[\hat{C}_{n,i}]$. By conditioning on the $\sigma$-field $\mathcal{F}_{n}$ and taking expectation, we obtain that
\begin{eqnarray*}
\mathbb{E} \left[ \left( \hat{G}_{n} - \hat{C}_{n,0} - \sum_{i=1}^{b^{m_{n}}} \mathbb{E}_{n_{i}}[\hat{C}_{n,i} ] \mathds{1}_{\left \{ \eta_{n,i} = 0 \right \}} \right)^{2} \right] & = & \mathbb{E}\left[ \sum_{i=1}^{b^{m_{n}}} \mathbb{E}_{n_{i}} \left[ \left(  \hat{C}_{n,i} - \mathbb{E}_{n_{i}}[\hat{C}_{n,i} ] \right)^{2}\right] \mathds{1}_{\left \{ \eta_{n,i} = 0 \right \}}  \right] \\
& = &   \sum_{i=1}^{b^{m_{n}}} \mathbb{E} \left[ \left(  \hat{C}_{n,i} - \mathbb{E}_{n_{i}}[\hat{C}_{n,i} ] \right)^{2}\right] \mathbb{P}\left ( \eta_{n,i} = 0 \right ). 
\end{eqnarray*}

\noindent Since $\mathbb{P}( \eta_{n,i} = 0) \leq 1$ and $\mathbb{E} [ (  \hat{C}_{n,i} - \mathbb{E}_{n_{i}}[\hat{C}_{n,i} ] )^{2} ] \leq 2\mathbb{E}[n_{i}^{2}]$, beacuse $\hat{C}_{n,i} \leq n_{i}$, we deduce that
\begin{eqnarray*}
\mathbb{E} \left[ \left( \hat{G}_{n} - \hat{C}_{n,0} - \sum_{i=1}^{b^{m_{n}}} \mathbb{E}_{n_{i}}[\hat{C}_{n,i} ] \mathds{1}_{\left \{ \eta_{n,i} = 0 \right \}} \right)^{2} \right] \leq 2 \sum_{i=1}^{b^{m_{n}}} \mathbb{E}[n_{i}^{2}].
\end{eqnarray*}

\noindent Since $\beta > -2/(1+\log_{b}\mathbb{E}[V_{1}^{2}])$, we obtain from the estimate (\ref{eq41}) that 
\begin{eqnarray*}
\mathbb{E} \left[ \left( \hat{G}_{n} - \hat{C}_{n,0} - \sum_{i=1}^{b^{m_{n}}} \mathbb{E}_{n_{i}}[\hat{C}_{n,i} ] \mathds{1}_{\left \{ \eta_{n,i} = 0 \right \}} \right)^{2} \right] = o\left(\frac{n^{2}}{ \ln^{2} n}\right).
\end{eqnarray*}
 
\noindent The above implies together with Chebyshev's inequality that
\begin{eqnarray*}
\hat{G}_{n} = \hat{C}_{n,0} + \sum_{i=1}^{b^{m_{n}}} \mathbb{E}_{n_{i}}[\hat{C}_{n,i} ] \mathds{1}_{\left \{ \eta_{n,i} = 0 \right \}} + o_{\rm p}\left(\frac{n}{ \ln n}\right).
\end{eqnarray*}

\noindent Finally, the statement follows easily after noticing that $0 \leq \hat{C}_{n,0} < b^{m_{n}+1} = o\left(\frac{n}{ \ln n}\right)$.
\end{proof}

Next, we combine Lemma \ref{lemma6} and \ref{lemma8}.

\begin{lemma} \label{lemma9}
Suppose that Condition \ref{Cond1} is fulfilled.  We have for $\beta > -2/(1+\log_{b}\mathbb{E}[V_{1}^{2}])$ that
\begin{equation*} 
    \begin{aligned}
    \hat{G}_{n} =
    & -  e^{- \frac{c}{\mu}  } \sum_{i=1}^{b^{m_{n}}} n_{i} \mathds{1}_{\left \{ \eta_{n,i} \geq 1
        \right \}}  + \sum_{i=1}^{b^{m_{n}}} n_{i} e^{-\frac{c}{\mu}\frac{\ln n_{i}}{\ln n} }  \\
    & -  ce^{-\frac{c}{\mu} }  \sum_{i=1}^{b^{m_{n}}} \frac{n_{i} \varpi(\ln n_{i})}{\ln n} -    \frac{c^{2}\mu^{2}-c^{2}\sigma^{2}}{2 \mu^{3}} e^{- \frac{c}{\mu}  }  \frac{n}{\ln n}    + o_{\rm p}\left(\frac{n}{\ln n} \right).
    \end{aligned}
\end{equation*}

\noindent where $\varpi:\mathbb{R} \rightarrow \mathbb{R}$ is the function in (\ref{eq10}). 
\end{lemma}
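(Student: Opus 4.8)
The plan is to combine the two previous lemmas directly. Starting from Lemma~\ref{lemma8}, we have
\[
\hat{G}_{n} = \sum_{i=1}^{b^{m_{n}}} \mathbb{E}_{n_{i}}[\hat{C}_{n,i}] \mathds{1}_{\{\eta_{n,i}=0\}} + o_{\rm p}\!\left(\frac{n}{\ln n}\right),
\]
and we wish to substitute the expansion of $\mathbb{E}_{n_{i}}[\hat{C}_{n,i}]$ from Lemma~\ref{lemma6}. First I would rewrite $\mathds{1}_{\{\eta_{n,i}=0\}} = 1 - \mathds{1}_{\{\eta_{n,i}\geq 1\}}$, so that the leading term $n_{i} e^{-\frac{c}{\mu}\frac{\ln n_{i}}{\ln n}}$ splits into a full sum over $1\le i\le b^{m_n}$ plus a correction indexed by $\{\eta_{n,i}\ge 1\}$. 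The key point here is that on the event $\{\eta_{n,i}\ge 1\}$ the vertex $v_i$ is disconnected, and since all but a negligible fraction of the balls sit in subtrees at height exactly $m_n$, we expect $\ln n_i = \ln n + o_{\rm p}(\ln n)$ for the relevant $i$, so that $e^{-\frac{c}{\mu}\frac{\ln n_i}{\ln n}}$ can be replaced by $e^{-c/\mu}$ in the disconnected-vertex sum at cost $o_{\rm p}(n/\ln n)$; this is what produces the first term $-e^{-c/\mu}\sum_i n_i \mathds{1}_{\{\eta_{n,i}\ge1\}}$.

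Next I would handle the lower-order terms of the Lemma~\ref{lemma6} expansion. For the term $-c\, n_i \varpi(\ln n_i)/\ln n\cdot e^{-\frac{c}{\mu}\frac{\ln n_i}{\ln n}}$, one again replaces $e^{-\frac{c}{\mu}\frac{\ln n_i}{\ln n}}$ by $e^{-c/\mu}$ --- permissible because $\varpi$ is bounded and $\sum_i n_i \le n$ --- and drops the indicator restriction $\mathds{1}_{\{\eta_{n,i}=0\}}$, since the difference is bounded by $(\text{const})\sum_i n_i\mathds{1}_{\{\eta_{n,i}\ge1\}}/\ln n$, which I will argue is $o_{\rm p}(n/\ln n)$ because $\mathbb{P}(\eta_{n,i}\ge1)\le m_n(1-p_n) = O(\ln\ln n/\ln n)\to 0$, hence $\mathbb{E}[\sum_i n_i\mathds{1}_{\{\eta_{n,i}\ge1\}}] = O(n\ln\ln n/\ln n) = o(n)$; combined with the extra $1/\ln n$ this is $o(n/\ln n)$ in $L^1$, hence $o_{\rm p}(n/\ln n)$. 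For the quadratic term $-\frac{c^2\mu^2-c^2\sigma^2}{2\mu^3}\cdot\frac{n_i\ln n_i}{\ln^2 n}\,e^{-\frac{c}{\mu}\frac{\ln n_i}{\ln n}}$, I would use $\ln n_i = \ln n + O_{\rm p}(m_n)$ uniformly, so $\ln n_i/\ln n = 1 + o_{\rm p}(1)$, and again replace the exponential by $e^{-c/\mu}$ and sum $\sum_i n_i = n - O(b^{m_n}) = n + o(n/\ln n)$ (the missing balls being those at height $<m_n$), producing the deterministic term $-\frac{c^2\mu^2-c^2\sigma^2}{2\mu^3}e^{-c/\mu}\frac{n}{\ln n}$. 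Finally the error term $\sum_i o(n_i/\ln n)$ is handled by noting it is $o(n/\ln n)$ since $\sum_i n_i \le n$, though one must be slightly careful that the $o(\cdot)$ in Lemma~\ref{lemma6} is uniform in $i$; this follows because the estimates in its proof depend on $n_i$ only through $\ln n_i \le \ln n$.

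The main obstacle I anticipate is making rigorous the replacement of $e^{-\frac{c}{\mu}\frac{\ln n_i}{\ln n}}$ by $e^{-c/\mu}$ inside sums weighted by $n_i$, uniformly over the $b^{m_n}$ subtrees and with the required $o_{\rm p}(n/\ln n)$ precision in the leading sum $-e^{-c/\mu}\sum_i n_i\mathds{1}_{\{\eta_{n,i}\ge1\}}$. Here we cannot afford to lose a factor of $\ln n$, so a crude bound $|e^{-\frac{c}{\mu}\frac{\ln n_i}{\ln n}} - e^{-c/\mu}| \le (\text{const})|\ln n_i - \ln n|/\ln n$ must be paired with control on $\mathbb{E}[\sum_i n_i |\ln n_i - \ln n|\mathds{1}_{\{\eta_{n,i}\ge1\}}]$. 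I would bound $|\ln n_i - \ln n|$ using~\eqref{eq19}: since $n_i$ is, up to $\pm s m_n$, binomial$(n,\prod_{k\le m_n} W_{v_i,k})$, one has $\ln n_i = \ln n + \sum_{k\le m_n}\ln W_{v_i,k} + o_{\rm p}(1)$ on the event $n_i$ is not too small, and $\sum_{k\le m_n}\ln W_{v_i,k}$ has mean $-m_n\mu/b\cdot b = O(m_n) = O(\ln\ln n)$ in absolute value (after using $\mathbb{E}[-\ln V_1] <\infty$ from Condition~\ref{Cond1}), so that $\mathbb{E}[\sum_i n_i|\ln n_i-\ln n|] = O(n\ln\ln n)$; dividing by $\ln n$ and using $\mathbb{P}(\eta_{n,i}\ge1) = O(\ln\ln n/\ln n)$ via independence of $(\eta_{n,i})$ from $(n_i)$ gives an $L^1$ bound of order $n(\ln\ln n)^2/\ln^2 n = o(n/\ln n)$, as needed. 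Once these uniform replacements are justified, collecting the four surviving terms yields exactly the claimed identity, and the proof concludes.
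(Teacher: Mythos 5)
Your proposal is correct and follows essentially the same route as the paper: substitute Lemma \ref{lemma6} into Lemma \ref{lemma8}, split $\mathds{1}_{\{\eta_{n,i}=0\}}=1-\mathds{1}_{\{\eta_{n,i}\geq 1\}}$, keep the full sum $\sum_i n_i e^{-\frac{c}{\mu}\frac{\ln n_i}{\ln n}}$ intact, and replace the exponential by $e^{-c/\mu}$ only in the disconnected-vertex sum and in the $1/\ln n$-weighted terms, using $\mathbb{P}(\eta_{n,i}\geq 1)=O(\ln\ln n/\ln n)$, the independence of $(\eta_{n,i})$ from $(n_i)$, the key bound $\mathbb{E}\bigl[\sum_i n_i|\ln n-\ln n_i|\bigr]=O(n\ln\ln n)$, and $\sum_i n_i = n+o(n/\ln n)$. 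The only cosmetic difference is that you justify the key bound through the size-biased mean of $-\ln W$ along the root path (what is actually needed and available there is $\mathbb{E}[-V_1\ln V_1]<\infty$, not $\mathbb{E}[-\ln V_1]<\infty$, which Condition \ref{Cond1} does not guarantee), whereas the paper truncates on the event $n_i>nb^{-km_n}$; both yield the same $O(n\ln\ln n)$ estimate.
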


\begin{proof}
We remark that the two sequences of random variables $(\eta_{n,i}, 1 \leq i \leq b^{m_{n}})$ and $(n_{i}, 1 \leq i \leq b^{m_{n}})$ are independent. Recall that the random variable $\eta_{n,i}$ has the binomial distribution with parameters $(m_{n}, 1-p_{n})$. Hence 
\begin{eqnarray} \label{eq29}
1 - \mathbb{P}\left(\eta_{n,i} = 0 \right ) = \mathbb{P}\left(\eta_{n,i} \geq 1 \right ) = 1-p_{n}^{m_{n}} = O \left(\frac{\ln \ln n}{\ln n} \right).
\end{eqnarray}

\noindent Since $\sum_{i=1}^{b^{m_{n}}} n_{i} \leq n$ and $\mathbb{P}( \eta_{n,i} = 0) \leq 1$, we obtain that
\begin{eqnarray*}
\mathbb{E}\left[ \sum_{i=1}^{b^{m_{n}}} \frac{n_{i}}{\ln n} \mathds{1}_{\left \{ \eta_{n,i} = 0 \right \}}  \right] = \frac{1}{\ln n}\sum_{i=1}^{b^{m_{n}}}  \mathbb{E}[ n_{i} ] \mathbb{P}\left(\eta_{n,i} = 0 \right ) \leq \frac{n}{\ln n}.
\end{eqnarray*}

\noindent Thus Lemma \ref{lemma6} and Lemma \ref{lemma8} imply that
\begin{eqnarray} \label{eq27}
\hat{G}_{n}  = \sum_{i=1}^{b^{m_{n}}} \left( n_{i}-  \frac{c^{2} \mu^{2}- c^{2}\sigma^{2}}{2 \mu^{3}}  \frac{n_{i}\ln n_{i}}{\ln^{2} n} -  c \frac{ n_{i} \varpi(\ln n_{i})}{\ln n}   \right) e^{-\frac{c}{\mu}\frac{\ln n_{i}}{\ln n} }  \mathds{1}_{\left \{ \eta_{n,i} = 0 \right \}}  + o_{\rm p}\left(\frac{n}{\ln n} \right).
\end{eqnarray}

\noindent By the estimation (\ref{eq29}) and the fact that $\sum_{i=1}^{b^{m_{n}}} n_{i} \leq n$, we get that
\begin{eqnarray*}
\mathbb{E}\left[ \left| \sum_{i=1}^{b^{m_{n}}} \frac{n_{i}\ln n_{i}}{\ln^{2} n} e^{- \frac{c}{\mu}\frac{\ln n_{i}}{\ln n} } \mathds{1}_{\left \{ \eta_{n,i} = 0 \right \}} - \sum_{i=1}^{b^{m_{n}}} \frac{n_{i} \ln n_{i}}{\ln^{2} n} e^{- \frac{c}{\mu}\frac{\ln n_{i}}{\ln n} } \right| \right]  \leq  \frac{1}{\ln n}\sum_{i=1}^{b^{m_{n}}}  \mathbb{E}[ n_{i} ] \mathbb{P}\left(\eta_{n,i} \geq 1 \right ) = o\left(\frac{n}{\ln n} \right)
\end{eqnarray*}

\noindent and
\begin{equation*}
    \begin{aligned}
        & \mathbb{E}\left[ \left| \sum_{i=1}^{b^{m_{n}}} \frac{n_{i} \varpi(\ln n_{i})}{\ln n} e^{-
                    \frac{c}{\mu}\frac{\ln n_{i}}{\ln n} } \mathds{1}_{\left \{ \eta_{n,i} = 0
                    \right \}} - \sum_{i=1}^{b^{m_{n}}} \frac{n_{i} \varpi(\ln n_{i})}{\ln n} e^{-
                    \frac{c}{\mu}\frac{\ln n_{i}}{\ln n} } \right| \right]  \\
        & \leq  \frac{K}{\ln n}\sum_{i=1}^{b^{m_{n}}}  \mathbb{E}[ n_{i} ] \mathbb{P}\left(\eta_{n,i} \geq 1 \right ) = o\left(\frac{n}{\ln n} \right),
    \end{aligned}
\end{equation*}

\noindent for some constant $K >0$ such that $|\varpi(x)| \leq K$ for $x \in \mathbb{R}$; recall that $\varpi$ in (\ref{eq10}) is a continuous function with period $d \geq 0$.  
The previous two estimates together with Markov's inequality imply that
\begin{eqnarray} \label{eq48}
\sum_{i=1}^{b^{m_{n}}} \frac{n_{i}\ln n_{i}}{\ln^{2} n} e^{- \frac{c}{\mu}\frac{\ln n_{i}}{\ln n} }
\mathds{1}_{\left \{ \eta_{n,i} = 0 \right \}} = \sum_{i=1}^{b^{m_{n}}} \frac{n_{i}\ln
    n_{i}}{\ln^{2} n} e^{- \frac{c}{\mu}\frac{\ln n_{i}}{\ln n} } +   o_{\rm p}\left(\frac{n}{\ln n}
\right),
\end{eqnarray}

\noindent and
\begin{eqnarray} \label{eq25}
\sum_{i=1}^{b^{m_{n}}} \frac{n_{i} \varpi(\ln n_{i})}{\ln n} e^{- \frac{c}{\mu}\frac{\ln n_{i}}{\ln n} } \mathds{1}_{\left \{ \eta_{n,i} = 0 \right \}} = \sum_{i=1}^{b^{m_{n}}} \frac{n_{i}\varpi(\ln n_{i})}{\ln n} e^{- \frac{c}{\mu}\frac{\ln n_{i}}{\ln n} } +   o_{\rm p}\left(\frac{n}{\ln n} \right).
\end{eqnarray}

We observe that for large enough $k \geq 1$,
\begin{eqnarray*}
\sum_{i=1}^{b^{m_{n}}} n_{i} \mathds{1}_{\{ n_{i} \leq nb^{-k m_{n}} \}} \leq b^{-m_{n}(k-1)}n = o \left( \frac{n}{\ln^{k-1} n} \right).
\end{eqnarray*}

\noindent Then, by using the inequality $|e^{-x}-e^{-y} | \leq | x- y|$ for $x,y \in\mathbb{R}_{+}$, we have that
\begin{eqnarray} \label{eq30}
\mathbb{E}\left[ \left| \sum_{i=1}^{b^{m_{n}}} n_{i} e^{- \frac{c}{\mu}\frac{\ln n_{i}}{\ln n} }  - \sum_{i=1}^{b^{m_{n}}} n_{i} e^{- \frac{c}{\mu}} \right| \right] & \leq & \frac{c}{\mu} \frac{1}{\ln n}\mathbb{E}\left[  \sum_{i=1}^{b^{m_{n}}} n_{i} (\ln n- \ln n_{i})  \right] \nonumber \\
&  =  & \frac{c}{\mu} \frac{1}{\ln n} \mathbb{E}\left[  \sum_{i=1}^{b^{m_{n}}} n_{i} (\ln n- \ln n_{i}) \mathds{1}_{\{ n_{i} > nb^{-k m_{n}} \}}  \right] +o\left( \frac{n}{\ln^{k-1}n} \right) \nonumber \\
& = &  O\left( \frac{n \ln \ln n}{\ln n} \right),
\end{eqnarray}

\noindent where we have used that $\sum_{i=1}^{b^{m_{n}}} n_{i} \leq n$ in order to obtain the last estimation. The above implies
\begin{eqnarray} \label{eq26}
\frac{1}{\ln n}\sum_{i=1}^{b^{m_{n}}} n_{i} e^{- \frac{c}{\mu}\frac{\ln n_{i}}{\ln n} } = e^{- \frac{c}{\mu} } \frac{1 }{\ln n} \sum_{i=1}^{b^{m_{n}}} n_{i}  +  o_{\rm p}\left( \frac{n}{\ln n} \right). 
\end{eqnarray}

\noindent Similarly, we deduce from (\ref{eq29})  and (\ref{eq30})
\begin{eqnarray*}
\mathbb{E}\left[ \left| \sum_{i=1}^{b^{m_{n}}} n_{i} e^{- \frac{c}{\mu}\frac{\ln n_{i}}{\ln n} } \mathds{1}_{\left \{ \eta_{n,i} \geq 1 \right \}} - \sum_{i=1}^{b^{m_{n}}} n_{i} e^{- \frac{c}{\mu}} \mathds{1}_{\left \{ \eta_{n,i} \geq 1 \right \}} \right| \right] & \leq & \mathbb{E}\left[ \left| \sum_{i=1}^{b^{m_{n}}} n_{i} e^{- \frac{c}{\mu}\frac{\ln n_{i}}{\ln n} }  - \sum_{i=1}^{b^{m_{n}}} n_{i} e^{- \frac{c}{\mu}} \right| \right] \mathbb{P}\left(\eta_{n,1} \geq 1  \right) \\
& = & o\left( \frac{n}{\ln n} \right),
\end{eqnarray*}
\begin{eqnarray*}
\mathbb{E}\left[ \left| \sum_{i=1}^{b^{m_{n}}} \frac{n_{i} \ln n_{i}}{\ln^{2} n} e^{- \frac{c}{\mu}\frac{\ln n_{i}}{\ln n} } - \frac{1}{\ln n}\sum_{i=1}^{b^{m_{n}}} n_{i} e^{- \frac{c}{\mu}}  \right| \right] = o \left( \frac{n}{\ln n} \right)
\end{eqnarray*}

\noindent and 
\begin{eqnarray*}
\mathbb{E}\left[ \left| \sum_{i=1}^{b^{m_{n}}} \frac{n_{i} \varpi(\ln n_{i})}{\ln n} e^{- \frac{c}{\mu}\frac{\ln n_{i}}{\ln n} } - \sum_{i=1}^{b^{m_{n}}} \frac{n_{i} \varpi(\ln n_{i})}{\ln n} e^{- \frac{c}{\mu}}  \right| \right] = o \left( \frac{n}{\ln n} \right).
\end{eqnarray*}

\noindent As a consequence of the previous three estimates, we deduce from an application of the Markov's inequality that
\begin{eqnarray} \label{eq49}
\sum_{i=1}^{b^{m_{n}}} n_{i} e^{- \frac{c}{\mu}\frac{\ln n_{i}}{\ln n} } \mathds{1}_{\left \{ \eta_{n,i} \geq 1 \right \}} = e^{- \frac{c}{\mu}} \sum_{i=1}^{b^{m_{n}}} n_{i}  \mathds{1}_{\left \{ \eta_{n,i} \geq 1 \right \}} + o_{\rm p}\left( \frac{n}{\ln n} \right),
\end{eqnarray}

\begin{eqnarray} \label{eq50}
\sum_{i=1}^{b^{m_{n}}} \frac{n_{i} \ln n_{i}}{\ln^{2} n} e^{- \frac{c}{\mu}\frac{\ln n_{i}}{\ln
        n} }  = e^{- \frac{c}{\mu}}  \frac{1}{\ln n} \sum_{i=1}^{b^{m_{n}}} n_{i}  + o_{\rm p}\left(
    \frac{n}{\ln n} \right),
\end{eqnarray}

\noindent and 
\begin{eqnarray} \label{eq32}
\sum_{i=1}^{b^{m_{n}}} \frac{n_{i} \varpi(\ln n_{i})}{\ln n} e^{- \frac{c}{\mu}\frac{\ln n_{i}}{\ln n} }  = e^{- \frac{c}{\mu}}   \sum_{i=1}^{b^{m_{n}}} \frac{n_{i} \varpi(\ln n_{i})}{\ln n}  + o_{\rm p}\left( \frac{n}{\ln n} \right).
\end{eqnarray}

By applying the estimations (\ref{eq48}), (\ref{eq25}), (\ref{eq26}), (\ref{eq49}), (\ref{eq50}) and (\ref{eq32}) into the expression in (\ref{eq27}), we obtain that
\begin{align*} 
\hat{G}_{n} & = -e^{- \frac{c}{\mu}} \sum_{i=1}^{b^{m_{n}}} n_{i} \mathds{1}_{\left \{ \eta_{n,i} \geq 1 \right \}} + \sum_{i=1}^{b^{m_{n}}} n_{i} e^{-\frac{c}{\mu}\frac{\ln n_{i}}{\ln n} } \\
& \hspace*{10mm} -  \frac{c^{2}\mu^{2}-c^{2}\sigma^{2}}{2 \mu^{3}} e^{-\frac{c}{\mu} }   \frac{1}{\ln n}  \sum_{i=1}^{b^{m_{n}}} n_{i} - ce^{-\frac{c}{\mu}} \sum_{i=1}^{b^{m_{n}}} \frac{n_{i} \varpi(\ln n_{i})}{\ln n} + o_{\rm p}\left(\frac{n}{\ln n} \right);
\end{align*}

\noindent notice also that $\mathds{1}_{\left \{ \eta_{n,i} \geq 1 \right \}} = 1-\mathds{1}_{\left \{ \eta_{n,i} =0 \right \}}$. Finally, our claim in Lemma \ref{lemma9} follows by showing that
\begin{eqnarray} \label{eq31}
 \frac{1}{\ln n} \sum_{i=1}^{b^{m_{n}}} n_{i} = \frac{n}{\ln n} + o_{\rm p}\left(\frac{n}{\ln n} \right). 
\end{eqnarray}

In this direction, we notice that $\sum_{i=1}^{b^{m_{n}}} n_{i} = n - \hat{C}(n)$, where $\hat{C}(n)$ denotes the number of balls of the vertices of $T_{n}^{{\rm sp}}$ at distance less or equal to $m_{n} - 1$ from the root. It should be clear that $0 \leq  \hat{C}(n) < \max(s,s_{0}) b^{m_{n} +1} = o(n)$, which implies (\ref{eq31}).
\end{proof}

We refine the result of Lemma \ref{lemma9}.

\begin{lemma} \label{lemma10}
Suppose that Condition \ref{Cond1} is fulfilled.  We have for $\beta > -2/(1+\log_{b}\mathbb{E}[V_{1}^{2}])$ that
\begin{align*}
\hat{G}_{n} & = - e^{-\frac{c}{\mu} }   \sum_{1 \leq d_{n}(v) \leq m_{n} } n_{v}  \varepsilon_{v} +   \sum_{d_{n}(v)=m_{n}} n_{v} e^{-\frac{c}{\mu}\frac{\ln n_{v}}{\ln n} }  \\
& \hspace*{10mm} - c e^{-\frac{c}{\mu} }  \sum_{ d_{n}(v)=m_{n}} \frac{n_{v} \varpi(\ln n_{v})}{\ln n}   - \frac{c^{2}\mu^{2}-c^{2}\sigma^{2}}{2 \mu^{3}}  e^{-\frac{c}{\mu} }   \frac{n}{\ln n}    + o_{\rm p}\left(\frac{n}{\ln n} \right).
\end{align*} 

 \noindent where $\varpi:\mathbb{R} \rightarrow \mathbb{R}$ is the function in (\ref{eq10}) and $(\varepsilon_{v}, 1 \leq  d_{n}(v) \leq m_{n})$ is a sequence of i.i.d.\ Bernoulli random variables with parameter $1-p_{n}$. 
\end{lemma}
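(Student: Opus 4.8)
The plan is to run the whole argument from the representation of $\hat{G}_{n}$ obtained in Lemma~\ref{lemma9}: the last three sums there already range over the vertices $v_{i}$ at height $m_{n}$ with $n_{i}=n_{v_{i}}$, so they are literally the sums $\sum_{d_{n}(v)=m_{n}}(\cdots)$ appearing in the statement, and the only thing to do is to replace the first term $-e^{-c/\mu}\sum_{i=1}^{b^{m_{n}}}n_{i}\mathds{1}_{\{\eta_{n,i}\ge 1\}}$ by $-e^{-c/\mu}\sum_{1\le d_{n}(v)\le m_{n}}n_{v}\varepsilon_{v}$, up to an error $o_{\rm p}(n/\ln n)$. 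To this end, for each vertex $v$ with $1\le d_{n}(v)\le m_{n}$ let $\varepsilon_{v}$ be the indicator that the edge between $v$ and its parent has been removed by the percolation; by the definition of Bernoulli bond-percolation in the regime (\ref{eq9}) these are i.i.d.\ Bernoulli random variables with parameter $1-p_{n}$, independent of $T_{n}^{{\rm sp}}$ and in particular of $(n_{i},\,1\le i\le b^{m_{n}})$, and $\eta_{n,i}=\sum_{v\preceq v_{i}}\varepsilon_{v}$, the sum being over the $m_{n}$ ancestors $v$ of $v_{i}$ (including $v_{i}$ itself) with $1\le d_{n}(v)\le m_{n}$.

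The first step is to show that having two or more deleted edges on a root-to-$v_{i}$ path is negligible on the scale $n/\ln n$. Using $(\eta-1)^{+}\le\tfrac12\eta(\eta-1)$ for every non-negative integer $\eta$ together with the factorial-moment formula for $\eta_{n,i}\sim\mathrm{binomial}(m_{n},1-p_{n})$,
\begin{equation*}
\mathbb{E}\big[\eta_{n,i}-\mathds{1}_{\{\eta_{n,i}\ge1\}}\big]=\mathbb{E}\big[(\eta_{n,i}-1)^{+}\big]\le\tfrac12\,m_{n}(m_{n}-1)(1-p_{n})^{2}=O\!\left(\frac{(\ln\ln n)^{2}}{\ln^{2} n}\right),
\end{equation*}
because $m_{n}=\lfloor\beta\log_{b}\ln n\rfloor$ and $1-p_{n}=c/\ln n$. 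Since $(\eta_{n,i})$ is independent of $(n_{i})$ and $\sum_{i}n_{i}\le n$, this yields $\mathbb{E}\big[\sum_{i}n_{i}(\eta_{n,i}-\mathds{1}_{\{\eta_{n,i}\ge1\}})\big]=o(n/\ln n)$; the summands being non-negative, Markov's inequality gives
\begin{equation*}
\sum_{i=1}^{b^{m_{n}}}n_{i}\,\mathds{1}_{\{\eta_{n,i}\ge1\}}=\sum_{i=1}^{b^{m_{n}}}n_{i}\,\eta_{n,i}+o_{\rm p}\!\left(\frac{n}{\ln n}\right).
\end{equation*}

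Finally I would rearrange the right-hand side by summing over the deleted edge rather than over $i$:
\begin{equation*}
\sum_{i=1}^{b^{m_{n}}}n_{i}\,\eta_{n,i}=\sum_{i=1}^{b^{m_{n}}}n_{i}\sum_{v\preceq v_{i}}\varepsilon_{v}=\sum_{1\le d_{n}(v)\le m_{n}}\varepsilon_{v}\sum_{i:\,v_{i}\succeq v}n_{i},
\end{equation*}
and for $v$ at height $j\le m_{n}$ the inner sum equals $n_{v}-r_{v}$, where $r_{v}$ is the number of balls of the sub-tree rooted at $v$ that lie at height (from the root) strictly below $m_{n}$. A ball at height $h<m_{n}$ contributes to at most $m_{n}$ of the $r_{v}$'s, and there are at most $\max(s,s_{0})\,b^{m_{n}+1}$ balls at heights below $m_{n}$, so $\sum_{1\le d_{n}(v)\le m_{n}}r_{v}\le m_{n}\max(s,s_{0})b^{m_{n}+1}=O\big((\ln n)^{\beta}\ln\ln n\big)=o(n/\ln n)$; hence $\sum_{i}n_{i}\eta_{n,i}=\sum_{1\le d_{n}(v)\le m_{n}}n_{v}\varepsilon_{v}+o(n/\ln n)$ deterministically. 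Chaining the last three displays gives $\sum_{i}n_{i}\mathds{1}_{\{\eta_{n,i}\ge1\}}=\sum_{1\le d_{n}(v)\le m_{n}}n_{v}\varepsilon_{v}+o_{\rm p}(n/\ln n)$, and substituting this into Lemma~\ref{lemma9} (and rewriting $\sum_{i=1}^{b^{m_{n}}}$ as $\sum_{d_{n}(v)=m_{n}}$ in the remaining terms, with $n_{i}=n_{v_{i}}$) proves the lemma. There is no serious obstacle here: the argument is a reorganisation of sums plus the one elementary bound above, and the only delicate point is precisely that bound — it is what makes the replacement error a genuine $o(n/\ln n)$ rather than merely $O(n\ln\ln n/\ln n)$, the latter being the order of the surviving drift term $e^{-c/\mu}\sum_{v}n_{v}\varepsilon_{v}$ itself, and it is where the choice $m_{n}\asymp\log_{b}\ln n$ enters.
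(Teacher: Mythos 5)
Your proof is correct and follows essentially the same route as the paper's: starting from Lemma~\ref{lemma9}, replacing $\mathds{1}_{\{\eta_{n,i}\geq 1\}}$ by $\eta_{n,i}=\sum_{v\preceq v_{i}}\varepsilon_{v}$ at cost $o_{\rm p}(n/\ln n)$, and then swapping the order of summation while absorbing the balls stored below height $m_{n}$ into the error term. The only (immaterial) differences are that you control $\mathbb{E}[\eta_{n,i}-\mathds{1}_{\{\eta_{n,i}\geq 1\}}]$ via the second factorial moment of the binomial rather than via the difference $m_{n}(1-p_{n})-(1-p_{n}^{m_{n}})$ as the paper does, and your accounting of the boundary contribution through $r_{v}$ is slightly sharper than the paper's two-sided bound $n_{v}-sb^{m_{n}}\leq\sum_{i:\,v\preceq v_{i}}n_{i}\leq n_{v}$.
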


\begin{proof}
Our claim follows from Lemma \ref{lemma9} by showing that 
\begin{eqnarray} \label{eq37}
e^{-\frac{c}{\mu} }   \sum_{i=1}^{b^{m_{n}}} n_{i}  \mathds{1}_{\left \{ \eta_{n,i} \geq 1 \right \}} =  e^{-\frac{c}{\mu} }   \sum_{1 \leq d_{n}(v) \leq m_{n}} n_{v} \varepsilon_{v} + o_{\rm p} \left(\frac{n}{\ln n} \right).
\end{eqnarray}

Recall that the sequences of random variables $(\eta_{n,i}, 1 \leq i \leq b^{m_{n}})$ and $(n_{i}, 1 \leq i \leq b^{m_{n}})$ are independent. It should be obvious that
\begin{eqnarray} \label{eq34}
\mathbb{E}\left[ e^{-\frac{c}{\mu} }   \sum_{i=1}^{b^{m_{n}}} n_{i} \mathds{1}_{\left \{ \eta_{n,i} \geq 1 \right \}} \right] = \left(1- p_{n}^{m_{n}}\right) e^{-\frac{c}{\mu} }   \sum_{i=1}^{b^{m_{n}}} \mathbb{E}\left[n_{i} \right]. 
\end{eqnarray}

\noindent Next consider the vertices $v_{i,0}, v_{i,1}, \dots, v_{i,m_{n}}=v_{i}$ along the path from the root $v_{i,0}$ of $T_{n}^{{\rm sp}}$ to the vertex $v_{i}$ at height $m_{n}$. For $j=1, \dots, m_{n}$, we associate to each consecutive pair of vertices $(v_{i,j-1}, v_{i,j})$ the edge that is between them (where $v_{i,j}$ is a vertex at height $j$ on $T_{n}^{{\rm sp}}$). Define the event $E_{i,j} \coloneqq \{ \text{the edge $(v_{i,j-1}, v_{i,j})$ has been removed after percolation} \}$ and write $\varepsilon_{i,j} \coloneqq  \mathds{1}_{E_{i,j}}$. So, $(\varepsilon_{i,j}, 1 \leq j \leq m_{n} )$ is a sequence of i.i.d.\ Bernoulli random variables with parameter $1-p_{n}$ and
\begin{eqnarray} \label{eq36}
\eta_{n,i} = \sum_{j=1}^{m_{n}} \varepsilon_{i,j}.
\end{eqnarray}

\noindent Then
\begin{eqnarray} \label{eq35}
\mathbb{E}\left[e^{-\frac{c}{\mu} }   \sum_{i=1}^{b^{m_{n}}} n_{i} \eta_{n,i} \right] = m_{n} \left(1-p_{n} \right) e^{-\frac{c}{\mu} }   \sum_{i=1}^{b^{m_{n}}} \mathbb{E}\left[n_{i}  \right]. 
\end{eqnarray}

\noindent Since 
\begin{eqnarray*}
e^{-\frac{c}{\mu} }   \sum_{i=1}^{b^{m_{n}}} n_{i} \mathds{1}_{\left \{ \eta_{n,i} \geq 1 \right \}} \leq e^{-\frac{c}{\mu} }   \sum_{i=1}^{b^{m_{n}}} n_{i}  \eta_{n,i},
\end{eqnarray*}

\noindent we deduce from (\ref{eq34})  and (\ref{eq35}) that
\begin{eqnarray*}
\mathbb{E} \left[e^{-\frac{c}{\mu} }   \sum_{i=1}^{b^{m_{n}}} n_{i}  \eta_{n,i} -  e^{-\frac{c}{\mu} }   \sum_{i=1}^{b^{m_{n}}} n_{i} \mathds{1}_{\left \{ \eta_{n,i} \geq 1 \right \}}\right]  \leq  \left( m_{n}(1-p_{n})  -(1-p_{n}^{m_{n}})  \right) e^{-\frac{c}{\mu}} \sum_{i=1}^{b^{m_{n}}} \mathbb{E}[n_{i}] = o \left(\frac{n}{\ln n} \right), 
\end{eqnarray*}

\noindent where we have used that $\sum_{i=1}^{b^{m_{n}}} n_{i} \leq n$ and our assumption (\ref{eq9}) in the percolation parameter. Therefore, the identity (\ref{eq36}) implies that
\begin{eqnarray} \label{eq39}
e^{-\frac{c}{\mu} }   \sum_{i=1}^{b^{m_{n}}} n_{i} \mathds{1}_{\left \{ \eta_{n,i} \geq 1 \right \}} =  e^{-\frac{c}{\mu} }   \sum_{i=1}^{b^{m_{n}}} \sum_{j=1}^{m_{n}} n_{i}  \varepsilon_{i,j} + o_{\rm p} \left(\frac{n}{\ln n} \right).
\end{eqnarray}

Finally, let $P(v_{i})$ denote the unique path from the root $v_{i,0}$ of $T_{n}^{{\rm sp}}$ to
$v_{i}$, i.e., the unique sequence of vertices $v_{i,0}, v_{i,1}, \dots, v_{i,m_{n}} = v_{i}$. For $v = v_{i,j} \in P(v_{i}) \setminus \{v_{i,0} \}$, write $\varepsilon_{v}$ instead of $\varepsilon_{i,j}$. We observe that
\begin{eqnarray} \label{eq40}
 e^{-\frac{c}{\mu}}  \sum_{i=1}^{b^{m_{n}}}  \sum_{j=1}^{m_{n}} n_{i}\varepsilon_{i,j} & =  & e^{-\frac{c}{\mu}}  \sum_{i=1}^{b^{m_{n}}} n_{i}  \sum_{v \in P(v_{i}) \setminus \{v_{i,0} \}}  \varepsilon_{v} \nonumber \\
& = & e^{-\frac{c}{\mu}}  \sum_{1 \leq  d_{n}(v) \leq m_{n}}  \varepsilon_{v} \sum_{i:v\in P(v_{i}) \setminus \{v_{i,0} \}} n_{i} \nonumber \\
& = & e^{-\frac{c}{\mu}}  \sum_{1 \leq  d_{n}(v) \leq m_{n}} n_{v}  \varepsilon_{v} + o_{\rm p} \left(\frac{n}{\ln n} \right),
\end{eqnarray}

\noindent because $n_{v}-sb^{m_{n}} \leq \sum_{i:v\in P(v_{i}) \setminus \{v_{i,0} \}} n_{i} \leq n_{v}$. 

Therefore, the estimation (\ref{eq37}) follows by combining (\ref{eq39}) and (\ref{eq40}).
\end{proof}

Following the original idea of Janson \cite{Jans2004} and subsequently used by Holmgren \cite{Holmgren2010, Holm2011} (where the number of random cuts required to isolate the root of a tree
was studied), we express $\hat{G}_{n}$ as a sum of triangular arrays. We write
\begin{eqnarray} \label{eqnew1}
\xi_{v} \coloneqq  e^{-\frac{c}{\mu} }  \frac{ \ln n}{n} n_{v} \varepsilon_{v}, \hspace*{5mm} \text{for} \hspace*{3mm} v \in T_{n}^{{\rm sp}} \hspace*{5mm} \text{such that} \hspace*{5mm} d_{n} \leq m_{n},
\end{eqnarray}

\noindent where $(\varepsilon_{v}, 1 \leq d_{n}(v) \leq m_{n})$ is a sequence of i.i.d.\ Bernoulli random variables with parameter $1-p_{n}$. We also write $\xi_{i}^{\prime} \coloneqq  -\alpha_{n}/n $ for $i \in \mathbb{N}$, where
\begin{align*}
\alpha_{n} & \coloneqq  \frac{\ln n}{n} \sum_{d_{n}(v)=m_{n}} n_{v} e^{-\frac{c}{\mu}\frac{\ln n_{v}}{\ln n} } - ce^{-\frac{c}{\mu} }  \sum_{d_{n}(v)=m_{n}} \frac{n_{v} \varpi(\ln n_{v})}{n} \\
& \hspace*{10mm} -e^{-\frac{c}{\mu}} \ln n -\frac{c}{\mu} e^{-\frac{c}{\mu}} \ln \ln n + ce^{-\frac{c}{\mu}}\varpi(\ln n) - ce^{-\frac{c}{\mu}}\phi \left(\ln \left( \theta^{-1} e^{-\frac{c}{\mu}} \ln n \right) \right) - \frac{c^{2}\mu^{2}-c^{2}\sigma^{2}}{2 \mu^{3}}  e^{-\frac{c}{\mu} }
\end{align*}

\noindent for any constant $\theta > 0$. By normalizing $\hat{G}_{n}$, Lemma \ref{lemma10} gives that
\begin{align*}
&  \left(n^{-1} \hat{G}_{n}-e^{-\frac{c}{\mu}}  \right)\ln n - c \mu^{-1} e^{-\frac{c}{\mu}} \ln \ln n + ce^{-\frac{c}{\mu}}\left( \varpi(\ln n)   - \phi \left(\ln \left( \theta^{-1} e^{-\frac{c}{\mu}} \ln n \right) \right)\right)  \\
& \hspace*{10mm} =  - \sum_{1 \leq  d_{n}(v) \leq m_{n}} \xi_{v} - \sum_{i =1}^{n} \xi_{i}^{\prime} + o_{\rm p}(1).
\end{align*}

\noindent Recall that the cardinalities $(n_{v}, 1 \leq  d_{n}(v) \leq m_{n})$ are not independent random variables and thus the sequence $(\xi_{v}, 1 \leq  d_{n}(v) \leq m_{n}) \cup (\xi^{\prime}_{i}, i \in \mathbb{N})$ is not a triangular array. However, conditional on $\mathcal{F}_{m_{n}}$, the $\sigma$-field generated by $(n_{v}, 1 \leq d_{n}(v) \leq m_{n})$, the sequence $(\xi_{v}, 1 \leq d_{n}(v) \leq m_{n}) \cup (\xi^{\prime}_{i}, i \in \mathbb{N})$ is a triangular array where $(\xi^{\prime}_{i}, i \in \mathbb{N})$ is a deterministic sequence.  

Finally, the proofs of Theorem \ref{Theo2} and Theorem \ref{NewTheo2} are going to be completed via a classical theorem for convergence of sums of triangular arrays to infinitely divisible distributions; see e.g. \cite[Theorem 15.28]{Kall2002}. In this direction, we need the following result. For the sake of simplicity, we introduce the following notation. For any constants $\theta, x >0$,
\begin{eqnarray*}
\Delta_{n,1}\coloneqq  \sum_{1 \leq  d_{n}(v) \leq m_{n}} \mathbb{P}(\xi_{v} \geq x| \mathcal{F}_{m_{n}}), \hspace*{5mm} \Delta_{n,2} \coloneqq \sum_{1 \leq d_{n}(v) \leq m_{n}} \mathbb{E} \left[ \xi_{v} \mathds{1}_{\left \{  \xi_{v} \leq \theta \right\} }| \mathcal{F}_{m_{n}} \right] - \alpha_{n}, 
\end{eqnarray*}

\begin{eqnarray*}
 \hspace*{5mm} \text{and} \hspace*{5mm}   \Delta_{n,3} \coloneqq \sum_{1 \leq d_{n}(v) \leq m_{n}} Var \left( \xi_{v} \mathds{1}_{\left \{  \xi_{v} \leq \theta \right\} }| \mathcal{F}_{m_{n}} \right).
\end{eqnarray*}

\noindent For $\theta>0$ and $x \geq 0$, we also define the function 
\begin{eqnarray*}
\psi_{\theta}(x) = 1 - \frac{\theta x}{1-e^{-x}} e^{x \lfloor \varrho -x^{-1}\ln \theta - x^{-1}c/\mu \rfloor - x \varrho +c/\mu}
\end{eqnarray*}

\noindent such that $\psi_{\theta}(0)=0$.  

\begin{theorem} \label{Theo3}
Recall that $m_{n} = \lfloor\beta \log_{b} \ln n \rfloor$. Suppose that Condition \ref{Cond1} holds. Furthermore, if 
$\ln V_{1}$ is lattice with span $d$ defined in (\ref{eq51}), we also assume that Condition \ref{NewCond1} holds for some $\varrho \in [0,1)$. For any constant $\theta >0$ and large enough $\beta$, the following statements hold as $n \rightarrow \infty$,
\begin{itemize}
\item[(i)] $\displaystyle \sup_{1 \leq  d_{n}(v) \leq m_{n}}\mathbb{P}\left(\xi_{v} \geq x| \mathcal{F}_{m_{n}} \right) \xrightarrow[]{a.s.} 0$ for every $x >0$.

\item[(ii)] For every $x >0$,
\begin{eqnarray*}
\Delta_{n,1} \xrightarrow[]{\mathbb{P}} \nu([x,\infty) ):= \left\{ \begin{array}{lcl}
              \frac{c}{\mu}e^{-\frac{c}{\mu}} \frac{1}{x} & \mbox{  if } & \ln V_{1} \hspace*{2mm} \text{is non-lattice}, \\
              \frac{c}{\mu} \frac{d}{1-e^{-d}} e^{d \lfloor \varrho -d^{-1}\ln x - d^{-1}c/\mu \rfloor - d \varrho}  & \mbox{  if } & \ln V_{1} \hspace*{2mm} \text{is lattice}. \\
              \end{array}
    \right.
\end{eqnarray*}

\item[(iii)] $ \displaystyle \Delta_{n,2}  \xrightarrow[]{\mathbb{P}} \left(\frac{2c\mu + c\mu^{2} - c\sigma^{2}  -\mu \sigma^{2} + \mu^{3} }{2\mu^{2}}  + \ln \theta + \psi_{\theta}(d) \right) \frac{c}{\mu} e^{-\frac{c}{\mu}} $.

\item[(iv)] $ \displaystyle  \Delta_{n,3}  \xrightarrow[]{\mathbb{P}} \theta  \left(1 + \psi_{\theta}(d) \right) \frac{c}{\mu}e^{-\frac{c}{\mu}}$.
\end{itemize}
\end{theorem}

The proof of this theorem is rather technical and postponed until the Appendix \ref{appendix1}. 

\begin{proof}[Proof of Theorem \ref{Theo2}]
We apply \cite[Theorem 15.28]{Kall2002} with the constants
\begin{eqnarray*}
a=0 \hspace*{5mm} \text{and} \hspace*{5mm} b = \left(\frac{2c\mu + c\mu^{2} - c\sigma^{2}  -\mu \sigma^{2} + \mu^{3} }{2\mu^{2}}  \right) \frac{c}{\mu} e^{-\frac{c}{\mu}} 
\end{eqnarray*}

\noindent to the sequence $\left( Z_{n} \coloneqq \sum_{1 \leq d_{n}(v) \leq m_{n}} \xi_{v} + \sum_{i =1}^{n} \xi_{i}^{\prime}, n \geq 1 \right)$ conditioned on $\mathcal{F}_{m_{n}}$. We observe that $\alpha_{n}/n \rightarrow 0$ as $n \rightarrow \infty$. Thus, Theorem \ref{Theo3} (i) implies that conditioned on $\mathcal{F}_{m_{n}}$ the variables $(\xi_{v} ,  1 \leq d_{n}(v) \leq m_{n}) \cup (\xi_{i}^{\prime}, i \geq 1)$ form a null array. Theorem \ref{Theo3} (ii) shows that $
\nu({\rm d}x) = c \mu^{-1} e^{-\frac{c}{\mu}} x^{-2}$, for $x >0$. Hence
\begin{eqnarray*}
\int_{0}^{\theta}x^{2} \nu({\rm d}x) = c \mu^{-1} e^{-\frac{c}{\mu}} \theta \hspace*{4mm} \text{and} \hspace*{4mm} \int_{\theta}^{1}x \nu({\rm d}x) = - c \mu^{-1}e^{-\frac{c}{\mu}} \ln \theta  \hspace*{4mm}  \text{for} \hspace*{4mm} \theta >0.
\end{eqnarray*}

\noindent Thus the right-hand side of Theorem \ref{Theo3} (iii) and (iv) can be written as
\begin{eqnarray*}
b- \int_{\theta}^{1}x \nu({\rm d}x) \hspace*{4mm}   \text{and} \hspace*{4mm} a +\int_{0}^{\theta}x^{2} \nu({\rm d}x), \hspace*{4mm}  \text{for} \hspace*{4mm} \theta >0,
\end{eqnarray*}

\noindent respectively. Therefore \cite[Theorem 15.28]{Kall2002} implies that there is the convergence in distribution $Z_{n} \xrightarrow[ ]{d} W$ conditioned on $\mathcal{F}_{m_{n}}$, where $W$ has a weakly $1$-stable distribution with characteristic function given by
\begin{eqnarray*}
\mathbb{E}[e^{itW}] = \exp \left(ibt + \int_{0}^{\infty} \left(e^{it x} - 1 -itx \mathds{1}_{\{x < 1 \}} \right) \nu({\rm d} x)\right).
\end{eqnarray*}
 
\noindent This expression can be simplified to show that $W$ is equal in distribution to 
\begin{eqnarray*}
\frac{c}{\mu} e^{- \frac{c}{\mu}} \left( Z + \ln \left( \frac{c}{\mu} \right) + \frac{(\mu^{2}- \sigma^{2})(c+\mu)}{2\mu^{2}}  - \gamma + 1 \right),
\end{eqnarray*}

\noindent where $\gamma$ is the Euler constant and the variable $Z$ has the continuous Luria-Delbr\"uck distribution; see, e.g., \cite[Section XVII.3]{Fe1971}. Finally, we notice that the conditioning does not affect the distribution of $W$. Then it follows that the convergence $Z_{n} \xrightarrow[ ]{d} W$ holds also unconditioned; We refer to \cite[pages 407-409]{Holmgren2010} for a formal proof of this fact where a general argument is provided for a sequence with a similar structure as $\left( Z_{n}, n \geq 1 \right)$.  Therefore, the proof of Theorem \ref{Theo2} is completed.
\end{proof}

\begin{proof}[Proof of Theorem \ref{NewTheo2}]
The proof follows along the lines of the proof of Theorem \ref{Theo2}. Details are left to the reader.
\end{proof}

\section{Proof of Theorem \ref{Theo1}} \label{sec4}

In this section, we deduce Theorem \ref{Theo1} from Theorem \ref{Theo2} by showing that $\frac{n}{\ln n} G_{n}$ and $\frac{\alpha n}{\ln n}\hat{G}_{n}$ are close enough as $n \rightarrow \infty$. We start by recalling some notation from Section \ref{sec2}. Remember that we write $m_{n} =  \lfloor \beta \log_{b} \ln n \rfloor$, for some constant $\beta > 0$, and that we assume that $n$ is large enough such that $0 < m_{n} < \ln n$. For $1 \leq i \leq b^{m_{n}}$, recall also that we let $v_{i}$ be a vertex in $T_{n}^{{\rm sp}}$ at height $m_{n}$ and we let $n_{i}$ be the number of balls stored at the sub-tree rooted at $v_{i}$. We further let $N_{i}$ be the (random) number of vertices at the sub-tree rooted at $v_{i}$. 

We denote by $C_{n,i}$ the number of vertices of the sub-tree of $T_{n}^{{\rm sp}}$ rooted at $v_{i}$ after percolation with parameter $p_{n}$. Clearly, $(C_{n,i}, 1 \leq i \leq b^{m_{n}})$ are conditionally independent random variables given $(n_{i}, 1 \leq i \leq b^{m_{n}})$. We write $\mathbb{E}_{n_{i}}[C_{n,i}] \coloneqq\mathbb{E}[C_{n,i} | n_{i}]$, i.e., it is the conditional expected value of $C_{n,i}$ given $n_{i}$.

We have the following estimation of  $C_{n,i}$ that corresponds to Lemma \ref{lemma6}.

\begin{lemma} \label{lemma15}
Suppose that Condition \ref{Cond1} and \ref{Cond2} are fulfilled. For $1 \leq i \leq d^{m_{n}}$, we have that
\begin{eqnarray*}
\mathbb{E}_{n_{i}}[C_{n,i} ] = \alpha n_{i}e^{-\frac{c}{\mu}\frac{\ln n_{i}}{\ln n} } -  \alpha  \frac{c^{2} \mu^{2}-c^{2}\sigma^{2}}{2\mu^{3}} \frac{n_{i} \ln n_{i}}{\ln^{2} n} e^{-\frac{c}{\mu}\frac{\ln n_{i}}{\ln n} } -  c\zeta   \frac{ n_{i}}{\ln n}  e^{- \frac{c}{\mu}\frac{\ln n_{i}}{\ln n} } +  o\left(\frac{n_{i}}{\ln n} \right),
\end{eqnarray*}

\noindent where $\zeta \in \mathbb{R}$ is the constant in (\ref{eq5}). 
\end{lemma}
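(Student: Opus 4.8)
The plan is to mimic the proof of Lemma~\ref{lemma6} step by step, counting vertices in place of balls. The starting point is the same observation of Bertoin \cite[Proof of Theorem 1]{Be1}: conditionally on the sub-tree $T_{i}$ rooted at $v_{i}$, a vertex $w\in T_{i}$ at height $d_{n_{i}}(w)$ inside $T_{i}$ remains connected to $v_{i}$ after percolation with probability $p_{n}^{\,d_{n_{i}}(w)}$, so that
\[
\mathbb{E}_{n_{i}}[C_{n,i}]=\mathbb{E}\!\left[\sum_{w\in T_{i}}p_{n}^{\,d_{n_{i}}(w)}\;\Big|\;n_{i}\right],
\]
and, given $n_{i}$, the right-hand side is distributed as $\mathbb{E}\bigl[\sum_{w\in T_{n_{i}}^{{\rm sp}}}p_{n}^{\,d_{n_{i}}(w)}\bigr]$. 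If $w_{i}$ denotes a uniformly chosen vertex of $T_{i}$, this equals $\mathbb{E}_{n_{i}}[N_{i}\,p_{n}^{\,d_{n_{i}}(w_{i})}]$, the analogue of the identity $\mathbb{E}_{n_{i}}[n_{i}^{-1}\hat{C}_{n,i}]=\mathbb{E}_{n_{i}}[p_{n}^{D_{n_{i}}(b_{i})}]$ used in Lemma~\ref{lemma6}; the essential new feature is that the number of summands $N_{i}$ is itself random.

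Next I would Taylor expand $p_{n}^{\,d_{n_{i}}(w)}$ about the deterministic power $p_{n}^{\ln n_{i}/\mu}$ exactly as in \eqref{eq14}, with a cubic remainder controlled by $|(d_{n_{i}}(w)-\mu^{-1}\ln n_{i})\ln p_{n}|^{3}$. Summing over $w\in T_{i}$ and taking the conditional expectation, the estimate reduces to four ingredients: (a) $\mathbb{E}_{n_{i}}[N_{i}]$; (b) the centred first moment $\mathbb{E}_{n_{i}}\bigl[\sum_{w}(d_{n_{i}}(w)-\mu^{-1}\ln n_{i})\bigr]=\mathbb{E}[\Upsilon(T_{n_{i}}^{{\rm sp}})]-\mu^{-1}(\ln n_{i})\,\mathbb{E}[N_{n_{i}}]$; (c) the centred second moment $\mathbb{E}_{n_{i}}\bigl[\sum_{w}(d_{n_{i}}(w)-\mu^{-1}\ln n_{i})^{2}\bigr]$; and (d) the absolute third moment $\mathbb{E}_{n_{i}}\bigl[\sum_{w}|d_{n_{i}}(w)-\mu^{-1}\ln n_{i}|^{3}\bigr]$, for which it suffices to know that it is $o(n_{i}\ln^{2}n)$, so that after multiplication by $p_{n}^{\ln n_{i}/\mu}|\ln p_{n}|^{3}=O(\ln^{-3}n)$ the remainder is $o(n_{i}/\ln n)$; this last point follows from the concentration of the vertex depths about $\mu^{-1}\ln n_{i}$ on the scale $\sqrt{\ln n_{i}}$, i.e.\ from the vertex counterpart of Lemma~\ref{lemma5} established in Appendix~\ref{sec3} (cf.\ Lemma~\ref{lemma1}).

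For ingredients (a) and (b) I would invoke the inputs already recorded in Section~\ref{sec1}: Condition~\ref{Cond2} gives $\mathbb{E}[N_{n_{i}}]=\alpha n_{i}+O(n_{i}\ln^{-1-\varepsilon}n_{i})$ and \eqref{eq5} gives $\mathbb{E}[\Upsilon(T_{n_{i}}^{{\rm sp}})]=\alpha\mu^{-1}n_{i}\ln n_{i}+\zeta n_{i}+o(n_{i})$, so that (b) equals $\zeta n_{i}+o(n_{i})$, the two $\alpha\mu^{-1}n_{i}\ln n_{i}$ terms cancelling and $(\ln n_{i})\cdot O(n_{i}\ln^{-1-\varepsilon}n_{i})=o(n_{i})$; for ingredient (c) the vertex counterpart of Lemma~\ref{lemma5}(ii) gives $\alpha\sigma^{2}\mu^{-3}n_{i}\ln n_{i}+o(n_{i}\ln n_{i})$. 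Finally, inserting these together with $\ln p_{n}=-c\ln^{-1}n+o(\ln^{-1}n)$ and $p_{n}^{\ln n_{i}/\mu}=e^{-\frac{c}{\mu}\frac{\ln n_{i}}{\ln n}}-\frac{c^{2}}{2\mu}\frac{\ln n_{i}}{\ln^{2}n}e^{-\frac{c}{\mu}\frac{\ln n_{i}}{\ln n}}+o(\ln^{-1}n)$ from \eqref{eq16}, and collecting the terms of orders $n_{i}$, $n_{i}\ln n_{i}\ln^{-2}n$ and $n_{i}\ln^{-1}n$, one gets: leading term $\alpha n_{i}e^{-\frac{c}{\mu}\frac{\ln n_{i}}{\ln n}}$; the coefficient $-\frac{c^{2}}{2\mu}=-\frac{c^{2}\mu^{2}}{2\mu^{3}}$ coming from the expansion of $p_{n}^{\ln n_{i}/\mu}$ combining with the $+\frac{c^{2}\sigma^{2}}{2\mu^{3}}$ produced by the (positive) quadratic term to give $-\alpha\frac{c^{2}\mu^{2}-c^{2}\sigma^{2}}{2\mu^{3}}\frac{n_{i}\ln n_{i}}{\ln^{2}n}e^{-\frac{c}{\mu}\frac{\ln n_{i}}{\ln n}}$; and the linear contribution $\zeta n_{i}\cdot\ln p_{n}$ producing $-c\zeta\frac{n_{i}}{\ln n}e^{-\frac{c}{\mu}\frac{\ln n_{i}}{\ln n}}$, which is exactly the claimed expansion. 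Note that the periodic function $\varpi$ appearing in Lemma~\ref{lemma6} is replaced here by the constant $\zeta$: Condition~\ref{Cond2} forces $\ln V_{1}$ to be non-lattice, and it is the path length over \emph{vertices}, governed by \eqref{eq5}, rather than over balls, governed by \eqref{eq10}, that enters the centred first moment.

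I expect the main obstacle to be the bookkeeping around the random vertex count $N_{i}$: one must verify that replacing $N_{n_{i}}$ by $\alpha n_{i}$ and $\Upsilon(T_{n_{i}}^{{\rm sp}})$ by its mean is legitimate at every order and, in particular, that the error term in Condition~\ref{Cond2} is small enough that, after being multiplied by $\ln n_{i}$ in ingredient (b), it does not corrupt the $\zeta n_{i}$ contribution --- which is precisely why the stronger form of Condition~\ref{Cond2}, rather than merely $\mathbb{E}[N]=\alpha n+o(n)$, is needed. As with Lemma~\ref{lemma6}, these estimates are to be read in the regime where $n_{i}$ is of the typical order $n/b^{m_{n}}$ (so that $\ln n_{i}\asymp\ln n$), which is where the lemma is subsequently applied.
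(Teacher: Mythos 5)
Your proposal is correct and rests on the same core mechanism as the paper's proof (Bertoin's connectivity-probability identity, a second-order Taylor expansion of $p_{n}^{d}$ about $p_{n}^{\ln n_{i}/\mu}$, and the depth-moment asymptotics), but it organizes the one genuinely new difficulty of this lemma --- the randomness of the vertex count $N_{i}$ --- differently. The paper uses the normalized identity $\mathbb{E}_{n_{i}}[N_{i}^{-1}C_{n,i}]=\mathbb{E}_{n_{i}}[p_{n}^{d_{n_{i}}(u_{i})}]$, expands the right-hand side via the per-vertex moments packaged in Lemma~\ref{lemma1}(i)--(iii), and then trades $N_{i}^{-1}$ for $(\alpha n_{i})^{-1}$ using $C_{n,i}\leq N_{i}$, Condition~\ref{Cond2} and the variance bound $Var(N)=o(n^{2}\ln^{-2-2\varepsilon}n)$ of Remark~\ref{remark1}. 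You instead keep the sum unnormalized, $\mathbb{E}_{n_{i}}[C_{n,i}]=\mathbb{E}_{n_{i}}[\sum_{w}p_{n}^{d_{n_{i}}(w)}]$, so the $N_{i}$-randomness is absorbed into the deterministic per-tree moments $\mathbb{E}[N]$, $\mathbb{E}[\Upsilon(T_{n_{i}}^{{\rm sp}})]$, $\mathbb{E}[\sum_{u}(d-\mu^{-1}\ln n_{i})^{2}]$ and the third absolute moment; these are exactly the raw inputs from which the paper derives Lemma~\ref{lemma1} in the appendix (equations (\ref{eq5}), (\ref{eq63}) and the cited result of Holmgren), so nothing new is needed, and your route bypasses the $N_{i}^{-1}$-versus-$\mathbb{E}[N_{i}]^{-1}$ step and hence the explicit use of $Var(N)$. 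Your accounting of where each coefficient comes from (the cancellation producing $\zeta n_{i}$ in ingredient (b), the $-c^{2}\mu^{2}/2\mu^{3}$ from the expansion of $p_{n}^{\ln n_{i}/\mu}$ combining with $+c^{2}\sigma^{2}/2\mu^{3}$ from the quadratic term, the third-moment remainder being $o(n_{i}/\ln n)$ after multiplication by $|\ln p_{n}|^{3}$) matches the statement, and your closing caveats --- that the stronger error term in Condition~\ref{Cond2} is what makes the $\ln n_{i}$-multiplied cancellation work, and that the $o(\cdot)$ terms are to be read in the regime $\ln n_{i}\asymp\ln n$ --- are exactly the points the paper relies on implicitly.
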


\begin{proof}
For $1 \leq i \leq d^{m_{n}}$, let $T_{i}$ be the sub-tree of $T_{n}^{{\rm sp}}$ rooted at the vertex $v_{i}$ at height $m_{n}$. Let $u_{i}$ be a vertex in $T_{i}$ with the uniform distribution on the set of vertices of the sub-tree $T_{i}$. Let $d_{n_{i}}(u_{i})$ be the height of $u_{i}$. We have the following key observation made by Bertoin \cite[Proof of Theorem 1]{Be1},
\begin{eqnarray} \label{eq70}
\mathbb{E}_{n_{i}}\left[ N_{i}^{-1} C_{n,i} \right] = \mathbb{E}_{n_{i}}\left[ p_{n}^{d_{n_{i}}(u_{i})} \right].
\end{eqnarray}

\noindent In words, the left-hand side can be interpreted as the probability that $u_{i}$ belongs to the percolation cluster containing the root of $T_{i}$, i.e., $v_{i}$, while the right-hand side can be interpreted as the probability that no edge has been removed in the path between $u_{i}$ and $v_{i}$. Then a similar computation as in the proof of Lemma 
\ref{lemma6} together with Lemma \ref{lemma1} (i)-(iii) in Appendix \ref{sec3} shows that  
\begin{eqnarray} \label{eq71}
\mathbb{E}_{n_{i}}\left[ p_{n}^{d_{n_{i}}(u_{i})} \right] = e^{-\frac{c}{\mu}\frac{\ln n_{i}}{\ln n} } - \frac{c^{2}\mu^{2}- c^{2}\sigma^{2}}{2\mu^{3}} \frac{\ln n_{i}}{\ln^{2} n} e^{-\frac{c}{\mu}\frac{\ln n_{i}}{\ln n} } -  \frac{c \zeta}{\alpha} \frac{1}{\ln n} e^{-\frac{c}{\mu}\frac{\ln n_{i}}{\ln n} }  + o\left(\frac{1}{\ln n} \right).
\end{eqnarray} 

\noindent On the other hand, we note that $C_{n,i} \leq N_{i}$. Hence Condition \ref{Cond2} and Remark \ref{remark1} imply that
\begin{eqnarray*}
\left| \mathbb{E}_{n_{i}}\left[ N_{i}^{-1} C_{n,i} \right] - \mathbb{E}_{n_{i}}\left[ \mathbb{E}_{n_{i}}^{-1}[N_{i}] C_{n,i} \right] \right| \leq \mathbb{E}_{n_{i}}^{-1}[N_{i}] \mathbb{E}\left[ \left| N_{i} -  \mathbb{E}_{n_{i}}[N_{i}]\right| \right] = o\left( \ln^{-1}n\right).
\end{eqnarray*}

\noindent By making use of Condition \ref{Cond2} one more time, we deduce that
\begin{eqnarray} \label{eq72}
 \mathbb{E}_{n_{i}}\left[ N_{i}^{-1} C_{n,i} \right] = \alpha^{-1} \mathbb{E}_{n_{i}}\left[ n_{i}^{-1} C_{n,i} \right] + o\left( \ln^{-1}n\right).
\end{eqnarray}

\noindent Therefore, our claim follows from the combination of (\ref{eq70}), (\ref{eq71}) and (\ref{eq72}).\end{proof}

Recall that $\eta_{n,i}$ denotes the total number of edges on the branch from $v_{i}$ to the root which has been deleted after percolation with parameter $p_{n}$. The next result is analogous of Lemma \ref{lemma8}.

\begin{lemma}  \label{lemma16}
Suppose that Condition \ref{Cond1} and \ref{Cond2} are fulfilled. We have for $\beta > -2/(\log_{b}\mathbb{E}[V_{1}^{2}]+1)$ that
\begin{eqnarray*}
G_{n} =   \sum_{i=1}^{b^{m_{n}}} \mathbb{E}_{n_{i}}[C_{n,i}] \mathds{1}_{\left \{ \eta_{n,i} = 0 \right \}}  + o_{\rm p}\left(\frac{n}{ \ln n}\right).
\end{eqnarray*}
\end{lemma}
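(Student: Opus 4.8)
The plan is to mimic the proof of Lemma~\ref{lemma8} almost verbatim, replacing the quantity "number of balls" by "number of vertices" throughout. First I would decompose $G_{n}$ as $C_{n,0} + \sum_{i=1}^{b^{m_{n}}} C_{n,i} \mathds{1}_{\{\eta_{n,i}=0\}}$, where $C_{n,0}$ is the number of vertices of $T_{n}^{{\rm sp}}$ at height at most $m_{n}-1$ that are still connected to the root after percolation. Exactly as before, $0 \le C_{n,0} < b^{m_{n}+1} = o(n/\ln n)$, so this term is harmless. The remaining task is to show that replacing each $C_{n,i}$ by its conditional mean $\mathbb{E}_{n_{i}}[C_{n,i}]$ introduces only an $o_{\rm p}(n/\ln n)$ error.

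For that second-moment step, I would condition on the $\sigma$-field $\mathcal{F}_{n}$ generated by $(\eta_{n,i})$ and $(n_{i})$, using that $(\eta_{n,i})$ is independent of $(C_{n,i})$ and of $(n_{i})$, and that $\mathbb{E}[C_{n,i}\mid\mathcal{F}_{n}] = \mathbb{E}_{n_{i}}[C_{n,i}]$. This gives
\begin{eqnarray*}
\mathbb{E}\left[\left(G_{n} - C_{n,0} - \sum_{i=1}^{b^{m_{n}}}\mathbb{E}_{n_{i}}[C_{n,i}]\mathds{1}_{\{\eta_{n,i}=0\}}\right)^{2}\right] = \sum_{i=1}^{b^{m_{n}}}\mathbb{E}\left[\left(C_{n,i}-\mathbb{E}_{n_{i}}[C_{n,i}]\right)^{2}\right]\mathbb{P}(\eta_{n,i}=0).
\end{eqnarray*}
Since $C_{n,i}\le N_{i}$ and $N_{i}\le n_{i}$ (each vertex of the sub-tree holds at least one ball), we have $\mathbb{E}[(C_{n,i}-\mathbb{E}_{n_{i}}[C_{n,i}])^{2}]\le 2\mathbb{E}[N_{i}^{2}]\le 2\mathbb{E}[n_{i}^{2}]$, and $\mathbb{P}(\eta_{n,i}=0)\le 1$, so the whole expression is bounded by $2\sum_{i=1}^{b^{m_{n}}}\mathbb{E}[n_{i}^{2}]$. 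By the estimate \eqref{eq41} and the choice $\beta > -2/(1+\log_{b}\mathbb{E}[V_{1}^{2}])$, this is $o(n^{2}/\ln^{2}n)$. Chebyshev's inequality then yields $G_{n} = C_{n,0} + \sum_{i}\mathbb{E}_{n_{i}}[C_{n,i}]\mathds{1}_{\{\eta_{n,i}=0\}} + o_{\rm p}(n/\ln n)$, and absorbing $C_{n,0}$ finishes the proof.

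The only genuinely new ingredient compared with Lemma~\ref{lemma8} is the crude domination $N_{i}\le n_{i}$; this is immediate from the split-tree construction, since every vertex retained in $T_{n}^{{\rm sp}}$ contains at least one ball, so the sub-tree rooted at $v_{i}$ has at most $n_{i}$ vertices. I do not anticipate any real obstacle here: the argument is a direct transcription, and all the analytic work (the variance bound \eqref{eq41}, the estimate on $C_{n,0}$, the Chebyshev step) carries over unchanged because it only ever used the bound $\hat{C}_{n,i}\le n_{i}$, which we now have in the form $C_{n,i}\le N_{i}\le n_{i}$.
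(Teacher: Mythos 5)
Your overall strategy is exactly the paper's (the paper's own proof simply says it "follows from a very similar argument as the proof of Lemma \ref{lemma8}"), and the decomposition, the conditioning on $\mathcal{F}_{n}$, the treatment of $C_{n,0}$ and the Chebyshev step all carry over. However, the one step you single out as the "only genuinely new ingredient" --- the domination $N_{i}\leq n_{i}$, justified by "every vertex retained in $T_{n}^{{\rm sp}}$ contains at least one ball" --- is false in general. An internal vertex of $T_{n}^{{\rm sp}}$ contains exactly $s_{0}$ balls, and $s_{0}=0$ is allowed by (\ref{eq3}); what the construction guarantees is only that the \emph{subtree} of every retained vertex contains at least one ball. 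For tries ($s=1$, $s_{0}=0$, an example discussed explicitly in the paper) already $n=2$ gives $N>n$ with positive probability, since the two balls can share an arbitrarily long chain of empty unary internal vertices before separating; consistently, the constant $\alpha$ in Condition \ref{Cond2} can exceed $1$, so $N_{i}\leq n_{i}$ fails even on average.

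The step is repairable, and the repair is precisely where Condition \ref{Cond2} enters (note that Lemma \ref{lemma16} assumes it while Lemma \ref{lemma8} does not). Bound $\mathbb{E}\left[\left(C_{n,i}-\mathbb{E}_{n_{i}}[C_{n,i}]\right)^{2}\right]\leq \mathbb{E}[C_{n,i}^{2}]\leq \mathbb{E}[N_{i}^{2}]$, and then use $\mathbb{E}[N_{i}^{2}]=\mathbb{E}[Var(N_{i}\mid n_{i})]+\mathbb{E}\left[\mathbb{E}^{2}[N_{i}\mid n_{i}]\right]=O(\mathbb{E}[n_{i}^{2}])$, which follows from Condition \ref{Cond2} and Remark \ref{remark1} exactly as in the proof of Lemma \ref{cor1} (the argument leading to (\ref{eq108})). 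With this, $\sum_{i=1}^{b^{m_{n}}}\mathbb{E}[N_{i}^{2}]=O\bigl(\sum_{i=1}^{b^{m_{n}}}\mathbb{E}[n_{i}^{2}]\bigr)=o(n^{2}\ln^{-2}n)$ by (\ref{eq41}) and the choice of $\beta$, and the remainder of your argument goes through unchanged.
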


\begin{proof}
The proof follows from a very similar argument as the proof of Lemma \ref{lemma8}. 
\end{proof}

Finally, we show that $\frac{n}{\ln n} G_{n}$ and $\frac{\alpha n}{\ln n}\hat{G}_{n}$ possess the same asymptotic behavior. 

\begin{lemma}  \label{lemma17}
Suppose that Condition \ref{Cond1} and \ref{Cond2} is fulfilled.  We have for $\beta > -2/(1+\log_{b}\mathbb{E}[V_{1}^{2}]+1)$ that
\begin{eqnarray*} 
G_{n} = \alpha \hat{G}_{n} + c \alpha (\varsigma  -\zeta \alpha^{-1}) e^{-\frac{c}{\mu}} \frac{n}{\ln n} + o_{\rm p}\left(\frac{n}{\ln n} \right).
\end{eqnarray*}

 \noindent where $\zeta \in \mathbb{R}$ is the constant defined in (\ref{eq5}) and 
 $\varsigma \in \mathbb{R}$ is the constant value of the $d$-periodic function $\varpi$  in (\ref{eq10}) when $d=0$. 
\end{lemma}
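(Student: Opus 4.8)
The plan is to mirror the proof architecture already used for $\hat G_n$, replacing $\hat C_{n,i}$ and Lemma \ref{lemma6} by $C_{n,i}$ and Lemma \ref{lemma15}, and then to compare the resulting expansion term by term with the expansion of $\alpha\hat G_n$ coming from Lemma \ref{lemma10}. Concretely, I would first run the analogues of Lemmas \ref{lemma9} and \ref{lemma10} starting from Lemma \ref{lemma15} and Lemma \ref{lemma16}: using the estimate \eqref{eq41} together with $\beta > -2/(1+\log_b\mathbb{E}[V_1^2])$, the same second-moment / Chebyshev argument as in Lemma \ref{lemma8} gives $G_n = \sum_{i} \mathbb{E}_{n_i}[C_{n,i}]\mathds{1}_{\{\eta_{n,i}=0\}} + o_{\rm p}(n/\ln n)$, and plugging in Lemma \ref{lemma15} and stripping the indicators exactly as in \eqref{eq48}--\eqref{eq32} (the bound $|\varpi|\le K$ is replaced by the constant $|\zeta|$, which is if anything easier) yields
\begin{align*}
G_n &= -\alpha e^{-\frac{c}{\mu}}\sum_{i=1}^{b^{m_n}} n_i \mathds{1}_{\{\eta_{n,i}\ge 1\}} + \alpha\sum_{i=1}^{b^{m_n}} n_i e^{-\frac{c}{\mu}\frac{\ln n_i}{\ln n}} \\
&\qquad - c\zeta e^{-\frac{c}{\mu}}\frac{n}{\ln n} - \alpha\,\frac{c^2\mu^2 - c^2\sigma^2}{2\mu^3} e^{-\frac{c}{\mu}}\frac{n}{\ln n} + o_{\rm p}\!\left(\frac{n}{\ln n}\right),
\end{align*}
and finally, passing from the sum over $i$ to the sum over the path vertices as in \eqref{eq37}--\eqref{eq40}, one obtains the exact analogue of Lemma \ref{lemma10} for $G_n$ with the single $\varpi$-term replaced by a $\zeta$-term:
\begin{align*}
G_n &= -\alpha e^{-\frac{c}{\mu}}\sum_{1\le d_n(v)\le m_n} n_v \varepsilon_v + \alpha\sum_{d_n(v)=m_n} n_v e^{-\frac{c}{\mu}\frac{\ln n_v}{\ln n}} \\
&\qquad - c\zeta e^{-\frac{c}{\mu}}\frac{n}{\ln n} - \alpha\,\frac{c^2\mu^2-c^2\sigma^2}{2\mu^3}e^{-\frac{c}{\mu}}\frac{n}{\ln n} + o_{\rm p}\!\left(\frac{n}{\ln n}\right),
\end{align*}
where crucially the Bernoulli variables $\varepsilon_v$ can be taken to be the \emph{same} as those appearing in Lemma \ref{lemma10}, since both statements concern percolation on the same realization of $T_n^{\rm sp}$ with the same removed edges.

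Having both expansions in hand with a common coupling, the second step is simply to subtract. Multiplying the Lemma \ref{lemma10} expansion of $\hat G_n$ by $\alpha$ and subtracting it from the expansion of $G_n$ above, the leading term $-\alpha e^{-c/\mu}\sum n_v\varepsilon_v$, the term $\alpha\sum_{d_n(v)=m_n} n_v e^{-\frac{c}{\mu}\frac{\ln n_v}{\ln n}}$, and the term $-\alpha\frac{c^2\mu^2-c^2\sigma^2}{2\mu^3}e^{-c/\mu}\frac{n}{\ln n}$ all cancel identically, leaving
\begin{equation*}
G_n - \alpha\hat G_n = \alpha c e^{-\frac{c}{\mu}}\sum_{d_n(v)=m_n}\frac{n_v\varpi(\ln n_v)}{\ln n} - c\zeta e^{-\frac{c}{\mu}}\frac{n}{\ln n} + o_{\rm p}\!\left(\frac{n}{\ln n}\right).
\end{equation*}
So the whole problem reduces to showing that $\sum_{d_n(v)=m_n} n_v\varpi(\ln n_v) = \varsigma\, n + o_{\rm p}(n)$, i.e. that the $\varpi$-weighted ball count at level $m_n$ concentrates, which is immediate once $\varpi$ is replaced by its constant value $\varsigma$: indeed when $d=0$ the function $\varpi$ is identically $\varsigma$, so $\sum_{d_n(v)=m_n} n_v\varpi(\ln n_v) = \varsigma\sum_{d_n(v)=m_n} n_v = \varsigma(n - \hat C(n)) = \varsigma n + O(b^{m_n+1}) = \varsigma n + o(n)$, exactly as in \eqref{eq31}. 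Substituting this gives $G_n - \alpha\hat G_n = (\alpha c\varsigma - c\zeta)e^{-c/\mu}\frac{n}{\ln n} + o_{\rm p}(n/\ln n) = c\alpha(\varsigma - \zeta\alpha^{-1})e^{-c/\mu}\frac{n}{\ln n} + o_{\rm p}(n/\ln n)$, which is the claim.

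A few points need care. First, Lemma \ref{lemma15} carries a hypothesis $\varpi\equiv\varsigma$ is \emph{not} assumed there — but Condition \ref{Cond2} already forces $\ln V_1$ non-lattice, hence $d=0$ and $\varpi\equiv\varsigma$, so there is no inconsistency; I would make this reduction explicit at the start. Second, one must check that all the error terms produced in the $G_n$-chain are genuinely $o_{\rm p}(n/\ln n)$ under \emph{both} Conditions \ref{Cond1} and \ref{Cond2}; the only genuinely new input beyond the $\hat G_n$ argument is \eqref{eq72}, which converts $N_i^{-1}C_{n,i}$ to $\alpha^{-1}n_i^{-1}C_{n,i}$ using Condition \ref{Cond2} and Remark \ref{remark1} (the improved variance bound $\operatorname{Var}(N)=o(n^2\ln^{-2-2\varepsilon}n)$), and this is already embedded in Lemma \ref{lemma15}, so downstream everything goes through verbatim. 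Third, I should note that $\sum_{d_n(v)=m_n} N_v$ versus $\sum_{d_n(v)=m_n} n_v$ never actually enters — after Lemma \ref{lemma16} the expansion is phrased entirely in terms of the deterministic ball counts $n_i$ — so no concentration estimate for the vertex counts $N_i$ is needed beyond what Lemma \ref{lemma15} already used. The main obstacle, then, is not any single hard estimate but bookkeeping: ensuring the coupling of the $\varepsilon_v$'s across the two expansions is legitimate and that every term matches so that the subtraction collapses cleanly to the single $\varpi$/$\zeta$ discrepancy; once that is set up, the conclusion is essentially a one-line consequence of $\varpi\equiv\varsigma$ together with $\sum n_v = n + o(n)$.
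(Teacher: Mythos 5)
Your proposal is correct and follows essentially the same route as the paper: both arguments rest on Lemma \ref{lemma15} and Lemma \ref{lemma16}, reduce $G_{n}-\alpha\hat{G}_{n}$ to the single $\zeta$-versus-$\varpi$ discrepancy (all other terms in the two expansions cancelling up to the factor $\alpha$), invoke the non-latticeness forced by Condition \ref{Cond2} to replace $\varpi$ by the constant $\varsigma$, and finish with the concentration estimates (\ref{eq25}), (\ref{eq26}) and (\ref{eq31}). The only organizational difference is that the paper performs the subtraction one step earlier, at the level of equation (\ref{eq27}) with the indicators $\mathds{1}_{\{\eta_{n,i}=0\}}$ still in place, which renders your (correct) coupling remark about reusing the same $\varepsilon_{v}$'s unnecessary.
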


\begin{proof}
We deduce from Lemma \ref{lemma15}, Lemma \ref{lemma16} and equation (\ref{eq27}) that
\begin{eqnarray*}
G_{n} = \alpha \hat{G}_{n}  - c \alpha \sum_{i=1}^{b^{m_{n}}} \left(   \frac{\zeta}{\alpha}  \frac{n_{i}}{\ln n}  - \frac{n_{i} \varpi(\ln n_{i})}{\ln n} \right) e^{-\frac{c}{\mu}\frac{\ln n_{i}}{\ln n}} \mathds{1}_{\left \{ \eta_{n,i} = 0 \right \}} + o_{\rm p}\left(\frac{n}{\ln n} \right).
\end{eqnarray*}

\noindent By Condition \ref{Cond2}, the random variable $\ln V_{1}$ is non-lattice and thus the function $\varpi$ is a constant equal to $\varsigma$. Hence 
\begin{eqnarray*}
G_{n} = \alpha \hat{G}_{n}  - c \alpha ( \zeta \alpha^{-1} - \varsigma) \sum_{i=1}^{b^{m_{n}}} \frac{n_{i}}{\ln n}   e^{-\frac{c}{\mu}\frac{\ln n_{i}}{\ln n}} \mathds{1}_{\left \{ \eta_{n,i} = 0 \right \}} + o_{\rm p}\left(\frac{n}{\ln n} \right).
\end{eqnarray*}

\noindent Furthermore, the estimations (\ref{eq25}),  (\ref{eq26}) and  (\ref{eq31}) allow us to deduce that 
\begin{eqnarray*}
 \sum_{i=1}^{b^{m_{n}}} \frac{n_{i}}{\ln n} e^{-\frac{c}{\mu} \frac{\ln n_{i}}{\ln n}} \mathds{1}_{\left \{ \eta_{n,i} = 0 \right \}} = e^{-\frac{c}{\mu}} \frac{n}{\ln n} + o_{\rm p}\left(\frac{n}{\ln n} \right).
\end{eqnarray*}

\noindent Therefore, the result follows clearly by combining the previous two estimates. 
\end{proof}

We are now in the position to prove Theorem \ref{Theo1}.

\begin{proof}[Proof of Theorem \ref{Theo1}]
By normalizing $G_{n}$, Lemma \ref{lemma17} gives that
\begin{eqnarray*}
 \left(n^{-1} G_{n} - \alpha e^{-\frac{c}{\mu}}  \right)\ln n - \frac{c \alpha}{\mu}e^{-\frac{c}{\mu}} \ln \ln n = \alpha \left(n^{-1} \hat{G}_{n} -e^{-\frac{c}{\mu}}  \right)\ln n - \frac{c \alpha}{\mu}e^{-\frac{c}{\mu}} \ln \ln n + c \alpha (\varsigma  -\zeta \alpha^{-1}) e^{-\frac{c}{\mu}}  +  o_{\rm p}(1).
\end{eqnarray*}

\noindent Therefore, the result in Theorem \ref{Theo1} follows from a simple application of Theorem \ref{Theo2}.
\end{proof}

\section{Percolation on \texorpdfstring{$b$}{b}-regular trees} \label{sec6}

In this section, we point out that the approach developed in the proof of Theorem \ref{Theo2} can be also applied to study percolation on other classes of trees. We focus here on the case of rooted complete regular $b$-ary trees $T_{h}^{{\rm reg}}$ with height $h \in \mathbb{N}$ and $b \geq 2$ a fixed integer (i.e., each vertex has exactly out-degree $b$). We note that there are $b^{k}$ vertices at distance $k = 0, 1, \dots, h$ from the root and a total of $n_{h} = (b^{h+1}-1)/(b-1)$
vertices. We perform Bernoulli bond percolation with parameter
\begin{eqnarray*}
p_{h} = e^{-c/h},
\end{eqnarray*}

\noindent where $c > 0$ is fixed. It is not difficult to show that this choice of the percolation parameter corresponds precisely to the supercritical regime, i.e., there exists a (unique) giant cluster such that $\lim_{h \rightarrow \infty} n_{h}^{-1} G_{h}^{{\rm reg}} = e^{- c }$, in probability, where $G_{h}^{{\rm reg}}$ denotes the size (i.e., the number of vertices) of the cluster that contains the root. We refer to \cite[Section 3]{Be1} for details. We are interested in the fluctuations of $G_{h}^{{\rm reg}}$. Recall that we write $y = \lfloor y \rfloor + \{ y \}$ for the decomposition of a real number $y$ as the sum of its integer and fractional parts. We introduce for every $\rho \in [0,1)$ and $x > 0$,
\begin{eqnarray*}
\bar{\Lambda}_{\rho }(x) = \frac{b^{- \rho + \lfloor \rho  - \log_{b}x \rfloor +1}}{b-1}.
\end{eqnarray*} 

\noindent This function decreases as $x \rightarrow \infty$ and it can be viewed as the tail of a measure $\Lambda_{\rho}$ on $(0, \infty)$. Furthermore, it is not difficult to see that this measure fulfills the integral condition $\int_{(0, \infty)} (1 \wedge x^{2}) \Lambda_{\rho }({\rm d} x) < \infty$. This enables us to introduce a L\'evy process without negative jumps $L_{\rho } = (L_{\rho }(t))_{t \geq 0}$ with Laplace exponent
\begin{eqnarray*}
\Psi_{\rho }(a) = \int_{(0, \infty)} (e^{-ax} - 1 + ax \mathds{1}_{\{x < 1 \}}) \Lambda_{\rho }({\rm d} x), \hspace*{4mm} \text{for} \hspace*{2mm} a \geq 0.
\end{eqnarray*} 

We stress that the same process arises in the study of percolation on rooted complete regular $b$-ary trees. More precisely, Bertoin \cite[Theorem 3.1]{Be2} has proven that the fluctuations of the number of vertices at height $h$ which has been disconnected from the root after percolation are described by $L_{\rho}$. Furthermore, a similar process appears in relation with limit theorems for the number of random records on a complete binary tree; see Janson \cite{Jans2004}. 

We state the following analogue of Theorem \ref{Theo1}. 
 
\begin{theorem} \label{Theo4}
In the regime where $h \rightarrow \infty$ with $\{\log_ {b} h \} \rightarrow \rho  \in [0,1)$, there is the convergence in distribution
\begin{eqnarray*}
\left(\frac{G_{h}^{{\rm reg}}}{n_{h}}- e^{-c} \right)h - ce^{-c} \log_{b} h \xrightarrow[ ]{d} - e^{-c} \left( L_{\rho }(c) + c \rho  - \frac{c}{b-1} \right).
\end{eqnarray*}
\end{theorem}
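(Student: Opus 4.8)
The plan is to transcribe the argument of Section~\ref{sec2} to the deterministic tree $T_h^{{\rm reg}}$, where every renewal‑theoretic ingredient used for split trees (the distance estimates of Appendix~\ref{sec3}, equations~(\ref{eq11})--(\ref{eq33})) collapses to elementary geometric summation: in a complete $b$‑ary tree of height $\ell$ the depth of a uniformly chosen vertex lies a.s.\ within $O(1)$ of $\ell$, with an explicit essentially‑geometric law. Fix $\beta>2$, put $m_h\coloneqq\lfloor\beta\log_b h\rfloor$, let $v_1,\dots,v_{b^{m_h}}$ be the height‑$m_h$ vertices, each root of a complete $b$‑ary subtree $T_i$ of height $h-m_h$ (so with $n_{h-m_h}$ vertices), write $C_{h,i}$ for the number of vertices of the cluster of $v_i$ inside $T_i$ after percolation and $\eta_{h,i}$ for the number of deleted edges on the root‑to‑$v_i$ path. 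As in Lemma~\ref{lemma8}, $G_h^{{\rm reg}}=\widehat C_{0}+\sum_i C_{h,i}\mathds{1}_{\{\eta_{h,i}=0\}}$ with $0\le\widehat C_{0}\le b^{m_h+1}=o(n_h/h)$. The analogue of Lemma~\ref{lemma6} is a direct computation: by Bertoin's identity (\cite[Proof of Theorem~1]{Be1}; cf.\ (\ref{eq13})), $\mathbb{E}[C_{h,i}]=n_{h-m_h}\,\mathbb{E}[p_h^{d(u)}]=\sum_{k=0}^{h-m_h}(bp_h)^k$ for $u$ uniform in $T_i$, and expanding $p_h=e^{-c/h}$ gives, uniformly in $i$,
\[
\mathbb{E}[C_{h,i}]=n_{h-m_h}\,e^{-c}\bigl(1+\tfrac{c\,m_h}{h}+\tfrac{c}{(b-1)h}+o(h^{-1})\bigr),
\]
which is where the constant $\tfrac{c}{b-1}$ first appears. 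Conditioning on $\sigma(\eta_{h,1},\dots,\eta_{h,b^{m_h}})$ and running the second‑moment/Chebyshev estimate of Lemma~\ref{lemma8} (using $C_{h,i}\le n_{h-m_h}$ and $b^{m_h}n_{h-m_h}^2=O(n_h^2h^{-\beta})=o(n_h^2/h^2)$ since $\beta>2$) then gives $G_h^{{\rm reg}}=\mathbb{E}[C_{h,1}]\,M_h+o_{\rm p}(n_h/h)$, where $M_h\coloneqq\#\{i:\eta_{h,i}=0\}$; so, exactly as for split trees, all the fluctuation is carried by the count $M_h$ of height‑$m_h$ vertices still joined to the root.

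Next I would linearize $M_h$ along the lines of Lemmas~\ref{lemma9}--\ref{lemma10}. A deleted edge at level $j\le m_h$ whose upper endpoint is still joined to the root disconnects exactly the $b^{m_h-j}$ height‑$m_h$ vertices below it, and attributing each disconnected vertex to the deleted edge closest to the root on its path gives an exact expression; since for $j\le m_h$ the event "upper endpoint joined to the root" has probability $p_h^{\,j-1}=1-O(\log h/h)$, it may be dropped so that, after weighting by $\mathbb{E}[C_{h,1}]$,
\[
G_h^{{\rm reg}}=\mathbb{E}[C_{h,1}]\,b^{m_h}-\mathbb{E}[C_{h,1}]\!\!\sum_{1\le d(v)\le m_h}\!\! b^{m_h-d(v)}\,\varepsilon_v+o_{\rm p}(n_h/h),
\]
where $\varepsilon_v$ indicates deletion of the edge just above $v$ and the $(\varepsilon_v)$ are i.i.d.\ $\mathrm{Bernoulli}(1-p_h)$. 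Setting $\xi_v\coloneqq e^{-c}\tfrac{h}{n_h}\,n_{h-d(v)}\,\varepsilon_v$ (so $\xi_v$ takes the value $\approx e^{-c}hb^{-d(v)}$ with probability $1-p_h$, and there are $b^{k}$ of them at each level $k\le m_h$), normalizing as in the statement produces
\[
\Bigl(\tfrac{G_h^{{\rm reg}}}{n_h}-e^{-c}\Bigr)h-ce^{-c}\log_b h=\tfrac{ce^{-c}}{b-1}-\Bigl(\sum_{1\le d(v)\le m_h}\xi_v-\alpha_h\Bigr)+o_{\rm p}(1),
\]
where $\alpha_h$ is an explicit deterministic sequence (of order $ce^{-c}(m_h-\log_b h)$) absorbing the $\log_b h$‑scale divergence carried by $\mathbb{E}[C_{h,1}]b^{m_h}$, and where $(\xi_v)_{1\le d(v)\le m_h}$ is now an honest independent triangular array.

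Finally I would feed $\sum_v\xi_v-\alpha_h$ into \cite[Theorem~15.28]{Kall2002}, verifying the four conditions exactly as in Theorem~\ref{Theo3}: (i) $\sup_v\mathbb{P}(\xi_v>x)\le 1-p_h\to 0$; (ii) $\sum_v\mathbb{P}(\xi_v>x)=(1-p_h)\sum_{k=1}^{\lfloor\log_b(he^{-c}/x)\rfloor}b^{k}$ converges; (iii) the truncated means $\sum_v\mathbb{E}[\xi_v\mathds{1}_{\{\xi_v\le\theta\}}]-\alpha_h$ converge; (iv) the truncated variances $\sum_v\mathrm{Var}(\xi_v\mathds{1}_{\{\xi_v\le\theta\}})$ converge. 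Each limit is a finite geometric sum, but its evaluation must track the floor functions carefully, via $b^{\lfloor\log_b h\rfloor}=h\,b^{-\{\log_b h\}}$ and $\lfloor\log_b h-\log_b y\rfloor=\lfloor\log_b h\rfloor+\lfloor\{\log_b h\}-\log_b y\rfloor$: this is precisely where the hypothesis $\{\log_b h\}\to\rho$ enters, and it is what makes the limiting Lévy measure the $b$‑adic step measure $\Lambda_\rho$ and produces the oscillation term $c\rho$. Matching the resulting Lévy triplet with that of $-e^{-c}\bigl(L_\rho(c)+c\rho-\tfrac{c}{b-1}\bigr)$ (a routine but careful computation), Kallenberg's theorem yields the convergence in distribution of $\sum_v\xi_v-\alpha_h$, hence of the left‑hand side of the displayed identity; the conditioning on $\sigma(\eta_{h,\cdot})$ is harmless here, so the convergence is unconditional (cf.\ \cite[pp.~407--409]{Holmgren2010}), which is the claim.

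The main obstacle is the same as in Section~\ref{sec2}, and comes in two parts. First, the oscillation bookkeeping in (ii)--(iv): the levels carrying the fluctuations sit near $\log_b h$, whose fractional part has no limit as $h\to\infty$ through all integers, so one must push the hypothesis $\{\log_b h\}\to\rho$ through every floor‑function manipulation to make the $\rho$‑indexed object $L_\rho$ emerge — this is the step with no analogue in the "smooth" part of the split‑tree argument. Second, making the truncations rigorous: that the subtree‑internal centred fluctuations $C_{h,i}-\mathbb{E}[C_{h,i}]$ are $o_{\rm p}(n_h/h)$ (which forces $\beta>2$), that the contributions of the levels $j>m_h$ — which carry the bulk of $G_h^{{\rm reg}}$ but none of its fluctuations — enter only through the deterministic term $\mathbb{E}[C_{h,1}]b^{m_h}$, and that replacing $\mathds{1}_{\{\eta_{h,i}\ge 1\}}$ by the sum of edge‑indicators along the path to $v_i$ costs only $o_{\rm p}(n_h/h)$.
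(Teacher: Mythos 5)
Your reduction of $G_{h}^{{\rm reg}}$ coincides with the paper's own: the same decomposition over the height-$m_h$ vertices, the same use of Bertoin's identity to compute $\mathbb{E}[C_{h,i}]$ (your expansion matches Lemma \ref{lemma18}, including the origin of the constant $c/(b-1)$), and the same conditional second-moment/Chebyshev step to replace each $C_{h,i}$ by its mean (Lemma \ref{lemma19}); the only cosmetic difference is that you take $m_h=\lfloor\beta\log_b h\rfloor$ with $\beta>2$ because you bound the subtree variances crudely by $O(n_{h-m_h}^2)$, whereas the paper keeps $m_h=2\lfloor\log_b h\rfloor$ by using the sharper $o(n_{h,i}^2)$ bound from \cite[Proof of Corollary 1]{Be1}. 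Where you genuinely diverge is the final step: having reached $G_h^{{\rm reg}}=\mathbb{E}[C_{h,1}]\bigl(b^{m_h}-\sum_i\mathds{1}_{\{\eta_{h,i}\ge1\}}\bigr)+o_{\rm p}(n_h/h)$, the paper simply invokes \cite[Corollary 3.4]{Be2}, which asserts $hb^{-m_h}\sum_i\mathds{1}_{\{\eta_{h,i}\ge1\}}-c\lfloor\log_b h\rfloor\xrightarrow{d}L_{\rho}(c)$, while you re-prove that limit from scratch by linearizing the indicators into an independent triangular array of edge variables and verifying the four conditions of \cite[Theorem 15.28]{Kall2002}, exactly as in Theorem \ref{Theo3}. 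The remark following Theorem \ref{Theo4} explicitly acknowledges your route as the alternative the authors declined in order to avoid repetition, so your plan is legitimate, and the linearization error estimates you sketch are all of the right order. Be aware, however, that the one step you defer as routine --- matching the limiting L\'evy triplet with that of $e^{-c}L_{\rho}(c)+ce^{-c}\rho$ --- is precisely the content of Bertoin's corollary and is where essentially all the remaining work lies: evaluating $\sum_v\mathbb{P}(\xi_v>x)$ via $b^{\lfloor\log_b h\rfloor}=h\,b^{-\{\log_b h\}}$ injects a prefactor $b^{-\{\log_b h\}}\to b^{-\rho}$ in front of $b^{\lfloor\rho-\log_b(xe^c)\rfloor}$, and you must verify that this prefactor, the drift carried by $\alpha_h$, and the compensation $\int_\theta^1 x\,\nu({\rm d}x)$ recombine into exactly the tail $c\bar{\Lambda}_{\rho}(xe^{c})$ and the shift $ce^{-c}\rho$. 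Until that computation is written out, your argument is a correct programme rather than a complete proof, whereas the paper's citation of \cite{Be2} closes the gap in one line at the cost of being non-self-contained.
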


We now prepare the ground for the proof of Theorem \ref{Theo4}. The strategy is the same as the one used in the proof of Theorem \ref{Theo2}. We write $m_{h} = 2 \lfloor \log_{b} h \rfloor$ and assume that $h$ is large enough such that $0 < m_{h} < h$. For $1 \leq i \leq b^{m_{h}}$, let $v_{i}$ be the $b^{m_{h}}$ vertices at height $m_{h}$. We notice that the number of vertices of the sub-tree of $T_{h}^{{\rm reg}}$ rooted at $v_{i}$ is given by $n_{h,i} = (b^{h-m_{h}+1}-1)/(b-1)$. We denote by $C_{h,i}$ the number of vertices of the sub-tree of $T_{h}^{{\rm reg}}$ rooted at $v_{i}$ after percolation with parameter $p_{h}$. Clearly, $(C_{h,i}, 1 \leq i \leq b^{m_{h}})$ is a sequence of independent and identically distributed random variables. 

\begin{lemma} \label{lemma18}
For $1 \leq i \leq b^{m_{h}}$, we have that
\begin{eqnarray*}
\mathbb{E}[C_{h,i} ] = n_{h,i} e^{-c} + n_{h,i}h^{-1} (b-1)^{-1}ce^{-c} + n_{h,i} m_{h} h^{-1} c e^{-c} +o(n_{h,i} h^{-1}).
\end{eqnarray*}
\end{lemma}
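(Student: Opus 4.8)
The plan is to mimic the argument of Lemma~\ref{lemma6}, which is considerably simpler here because the regular tree is deterministic: the subtree rooted at $v_i$ is itself a complete $b$-ary tree of height $h-m_h$ with exactly $n_{h,i}=(b^{h-m_h+1}-1)/(b-1)$ vertices. First I would invoke the key identity of Bertoin \cite[Proof of Theorem~1]{Be1}, namely $\mathbb{E}[n_{h,i}^{-1}C_{h,i}]=\mathbb{E}[p_h^{d(u_i)}]$, where $u_i$ is a uniformly chosen vertex of the subtree and $d(u_i)$ is its depth inside that subtree. Since the subtree is a complete $b$-ary tree of height $H:=h-m_h$, the depth distribution is explicit: $\mathbb{P}(d(u_i)=k)=b^k/n_{h,i}$ for $k=0,1,\dots,H$, so
\begin{eqnarray*}
\mathbb{E}[C_{h,i}] \;=\; \sum_{k=0}^{H} b^{k}\, p_h^{k} \;=\; \frac{(b\,p_h)^{H+1}-1}{b\,p_h-1}.
\end{eqnarray*}

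Next I would perform a careful asymptotic expansion of this closed form as $h\to\infty$, using $p_h=e^{-c/h}$, $H=h-m_h$ and $m_h=2\lfloor\log_b h\rfloor=O(\log h)$. The leading term comes from $(b p_h)^{H+1}=b^{H+1}e^{-c(H+1)/h}$, and since $(H+1)/h = 1 - (m_h-1)/h$, we get $e^{-c(H+1)/h}=e^{-c}\bigl(1 + c(m_h-1)/h + O((\log h/h)^2)\bigr)=e^{-c}+c e^{-c} m_h h^{-1}+o(m_h h^{-1})$; multiplying by $b^{H+1}=(b-1)n_{h,i}+1$ gives the first and third terms $n_{h,i}e^{-c}+n_{h,i}m_h h^{-1}ce^{-c}$, up to the stated error. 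The denominator $b p_h - 1 = (b-1)\bigl(1 - b\,h^{-1}(b-1)^{-1}c + o(h^{-1})\bigr)$, so $1/(bp_h-1) = (b-1)^{-1}\bigl(1 + b\,h^{-1}(b-1)^{-1}c + o(h^{-1})\bigr)$; dividing the numerator $b^{H+1}e^{-c}(1+o(1))$ by this correction produces the middle term $n_{h,i}h^{-1}(b-1)^{-1}ce^{-c}$. I would keep track of the $-1$ in the numerator and the $+1$ in $b^{H+1}$ and check they contribute only to the $o(n_{h,i}h^{-1})$ error, since $b^{-(H+1)}=O(b^{-h})$ is super-polynomially small compared with $h^{-1}$.

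The main obstacle is purely bookkeeping: ensuring that all three of the $O(h^{-1})$-type corrections (from expanding $e^{-c(H+1)/h}$, from expanding the denominator, and cross terms) are collected correctly and that no term of order $n_{h,i}h^{-1}$ is dropped or double-counted, while verifying that the genuinely negligible pieces — the constants $\pm 1$ in numerator and denominator, and the quadratic remainders $O((\log h/h)^2)$ and $O(h^{-2})$ — are indeed $o(n_{h,i}h^{-1})$. Since $n_{h,i}\asymp b^{H}$ grows geometrically, any additive $O(1)$ error is trivially $o(n_{h,i}h^{-1})$, and $O((\log h)^2/h^2)\cdot n_{h,i}=o(n_{h,i}h^{-1})$ as well, so the expansion closes. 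This establishes the lemma; it is the exact analogue of Lemma~\ref{lemma6} with the periodic function $\varpi$ replaced by the constant $(b-1)^{-1}$ and the quadratic fluctuation term absent because the depth of a uniform vertex in a complete $b$-ary tree concentrates at scale $O(1)$ rather than $O(\sqrt{\log})$.
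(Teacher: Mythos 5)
Your approach is essentially the same as the paper's: both rest on Bertoin's identity $\mathbb{E}[n_{h,i}^{-1}C_{h,i}]=\mathbb{E}[p_h^{d(u_i)}]$, the explicit depth law $\mathbb{P}(d(u_i)=k)=b^k/n_{h,i}$, and an expansion of the resulting geometric sum; the paper factors out the top term $b^{H}e^{-cH/h}$ and sums the reversed series, whereas you use the closed form $((bp_h)^{H+1}-1)/(bp_h-1)$, which is an equivalent reorganization.

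There is, however, a concrete bookkeeping error in your expansion, precisely in the place you identify as the main obstacle. When you write $e^{-c(H+1)/h}=e^{-c}+ce^{-c}m_hh^{-1}+o(m_hh^{-1})$ you have discarded the term $-ce^{-c}h^{-1}$ coming from the ``$-1$'' in $(m_h-1)/h$; since $h^{-1}$ is \emph{not} $o(h^{-1})$, this term is exactly at the order of precision the lemma requires and cannot be absorbed into the error. Correspondingly, your claim that the denominator expansion $1/(bp_h-1)=(b-1)^{-1}(1+bc(b-1)^{-1}h^{-1}+o(h^{-1}))$ ``produces the middle term $n_{h,i}h^{-1}(b-1)^{-1}ce^{-c}$'' is off by a factor of $b$: it actually contributes $n_{h,i}\,bc(b-1)^{-1}e^{-c}h^{-1}$. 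The correct middle term arises only from combining the two, via $bc(b-1)^{-1}-c=c(b-1)^{-1}$; your two slips happen to cancel, so the final formula you state is right, but the derivation as written does not establish it. Replace the error term by $O((\log h)^2h^{-2})=o(h^{-1})$, keep the $-ce^{-c}h^{-1}$ explicitly, and collect the two first-order corrections together; the rest of your argument (the negligibility of the additive $\pm1$'s and of the quadratic remainders) is fine.
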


\begin{proof}
For $1 \leq i \leq b^{m_{h}}$, let $T_{h, i}$ be the sub-tree of $T_{h}^{{\rm reg}}$ rooted at the vertex $v_{i}$. Let $u_{i}$ denote a uniform chosen vertex in $T_{h, i}$ and write $d_{h}(u_{i})$ for its height in $T_{h, i}$. It should be obvious that $\mathbb{P}( d_{h}(u_{i}) = k ) = b^{k} n_{h,i}^{-1}$, for $k \in \{ 0, 1, \dots, h-m_{h} \}$. By the key observation made by Bertoin \cite[Proof of Theorem 1]{Be1}, we have that
\begin{eqnarray*}
\mathbb{E} \left[ n_{h,i}^{-1} C_{h,i} \right] & = & \mathbb{E} \left[ e^{-ch^{-1}d_{h}(u_{i})}\right] = \sum_{k=0}^{h-m_{h}} e^{-c h^{-1}k } \mathbb{P}(d_{h}(u_{i}) = k)  \\
& = & \frac{b^{h-m_{h}}}{n_{h,i}} e^{-c\frac{h-m_{h}}{h}} \sum_{k=0}^{h - m_{h}} e^{ch^{-1}k}b^{-k} \\
& = & \frac{b^{h-m_{h}}}{n_{h,i}} e^{-c\frac{h-m_{h}}{h}} \left( \frac{b}{b-1} + \frac{cb}{h(b-1)^{2}} + o(h^{-1})\right).
\end{eqnarray*}

\noindent Recall that $n_{h,i} = (b^{h-m_{h}+1}-1)/(b-1)$. Therefore, after some simple computations we obtain that
\begin{eqnarray*}
\mathbb{E} \left[ n_{h,i}^{-1} C_{h,i} \right] & = &  e^{-c\frac{h-m_{h}}{h}} \left( 1 +c(b-1)^{-1}h^{-1} \right) + o(h^{-1})
\end{eqnarray*}

\noindent from which our claim follows.
\end{proof}

Let $\eta_{h,i}$ be the total number of edges on the branch from $v_{i}$ to the root which have been deleted after percolation with parameter $p_{h}$. Notice that the random variable $\eta_{h,i}$ has the binomial distribution with parameters $(m_{h}, 1-p_{h})$. But the random variables $(\eta_{h,i}, 1 \leq i \leq b^{m_{h}})$ are not independent. On the other hand,  $\eta_{h,i} = 0$ if and only if the vertex $v_{i}$ is still connected to the root of $T_{h}^{{\rm reg}}$. 

\begin{lemma} \label{lemma19}
We have that
\begin{eqnarray*}
G_{h}^{{\rm reg}} =  - n_{h,1} e^{-c}  \sum_{i=1}^{b^{m_{h}}} \mathds{1}_{\left \{ \eta_{h,i} \geq 1 \right \}} + n_{h} e^{-c} +  n_{h}h^{-1} (b-1)^{-1}ce^{-c} + n_{h}m_{h}h^{-1}ce^{-c} + o_{\rm p}(n_{h}h^{-1}).
\end{eqnarray*}
\end{lemma}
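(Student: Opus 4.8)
The plan is to mimic exactly the chain of estimates that led from Lemma \ref{lemma6} to Lemma \ref{lemma10} in the split-tree setting, but now everything is genuinely deterministic at the level of the sub-tree sizes, which simplifies several steps. First I would write the obvious exact decomposition
\begin{eqnarray*}
G_{h}^{{\rm reg}} = C_{h,0} + \sum_{i=1}^{b^{m_{h}}} C_{h,i} \mathds{1}_{\left\{ \eta_{h,i} = 0 \right\}},
\end{eqnarray*}
where $C_{h,0}$ counts the vertices at height $\leq m_{h}-1$ still connected to the root; since $C_{h,0} \leq (b^{m_{h}+1}-1)/(b-1) = O(h^{2})$ and $n_{h} h^{-1}$ grows like $b^{h}/h$, this term is $o(n_{h} h^{-1})$ and can be discarded immediately. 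The sequences $(\eta_{h,i})$ and $(C_{h,i})$ are independent, so conditioning on the $\sigma$-field generated by $(\eta_{h,i}, 1\leq i \leq b^{m_{h}})$ and using the conditional variance bound $\mathbb{E}[(C_{h,i} - \mathbb{E}[C_{h,i}])^{2}] \leq \mathbb{E}[C_{h,i}^{2}] \leq n_{h,i}^{2}$ together with $\sum_i \mathbb{P}(\eta_{h,i}=0) \leq b^{m_{h}}$ gives
\begin{eqnarray*}
\mathbb{E}\left[\Big(\sum_{i=1}^{b^{m_{h}}} (C_{h,i} - \mathbb{E}[C_{h,i}])\mathds{1}_{\left\{\eta_{h,i}=0\right\}}\Big)^{2}\right] \leq b^{m_{h}} n_{h,1}^{2},
\end{eqnarray*}
and with $m_{h} = 2\lfloor \log_b h\rfloor$ the right-hand side is $b^{2\log_b h} (b^{h-m_h+1})^2 / (b-1)^2 \asymp h^2 b^{2h-2m_h} = b^{2h}/(h^2(b-1)^2)\cdot b^{-2m_h+2}h^4 = o(n_h^2 h^{-2})$; hence by Chebyshev $G_{h}^{{\rm reg}} = \sum_{i} \mathbb{E}[C_{h,i}] \mathds{1}_{\left\{\eta_{h,i}=0\right\}} + o_{\rm p}(n_h h^{-1})$.

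Next I would substitute the expansion from Lemma \ref{lemma18}. Writing $\mathbb{E}[C_{h,i}] = n_{h,i} e^{-c} + R_{h,i}$ with $R_{h,i} = n_{h,i}h^{-1}((b-1)^{-1} + m_h)ce^{-c} + o(n_{h,i}h^{-1})$, the key point is that the correction terms $R_{h,i}$, when summed against $\mathds{1}_{\left\{\eta_{h,i}=0\right\}}$, can have the indicator removed at the cost of an error of order $\mathbb{P}(\eta_{h,1}\geq 1)\sum_i |R_{h,i}|$. Since $\mathbb{P}(\eta_{h,1}\geq 1) = 1 - p_h^{m_h} = 1 - e^{-cm_h/h} = O(m_h/h) = O(\log_b h / h)$ and $\sum_i |R_{h,i}| = O(n_h m_h/h)$, this error is $O(n_h m_h^2 / h^2) = o(n_h h^{-1})$ — wait, that needs $m_h^2 = o(h)$, which is false. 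The fix is the same as in Lemma \ref{lemma9}: one keeps the indicator on the leading $n_{h,i}e^{-c}$ term but for the $m_h$-order correction one argues directly that $c e^{-c} m_h h^{-1} \sum_i n_{h,i} \mathds{1}_{\{\eta_{h,i}=0\}} = c e^{-c} m_h h^{-1} (n_h - n_{h,1}\sum_i \mathds{1}_{\{\eta_{h,i}\geq 1\}}) + o(n_h h^{-1})$ and then notes $n_{h,1} m_h h^{-1} \sum_i \mathds{1}_{\{\eta_{h,i}\geq 1\}}$ has expectation $n_{h,1} b^{m_h} m_h h^{-1}(1-p_h^{m_h}) = O(n_h m_h^2 h^{-2})$, which is genuinely $o(n_h/h)$ only after one checks $m_h^2 = (2\log_b h)^2 = o(h)$ — which is true. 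So the earlier slip is harmless; $m_h^2/h \to 0$ indeed. Collecting, $\sum_i \mathbb{E}[C_{h,i}]\mathds{1}_{\{\eta_{h,i}=0\}} = e^{-c}\sum_i n_{h,i}\mathds{1}_{\{\eta_{h,i}=0\}} + (n_h h^{-1}(b-1)^{-1} + n_h m_h h^{-1})ce^{-c} + o_{\rm p}(n_h h^{-1})$, and finally I rewrite $e^{-c}\sum_i n_{h,i}\mathds{1}_{\{\eta_{h,i}=0\}} = e^{-c} n_{h,1}(b^{m_h} - \sum_i \mathds{1}_{\{\eta_{h,i}\geq 1\}})$ using $n_{h,i} = n_{h,1}$ and $b^{m_h} n_{h,1} = (b^{h+1} - b^{m_h})/(b-1) = n_h + O(h^2) = n_h + o(n_h h^{-1})$; this yields exactly the claimed formula.

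The step I expect to require the most care is not any single inequality but the bookkeeping that keeps every discarded term genuinely $o_{\rm p}(n_h h^{-1})$: one must track that $b^{m_h}$ is polynomial in $h$ (so $C_{h,0}$ and similar boundary contributions are negligible against $n_h/h \asymp b^h/h$), that the variance bound after conditioning beats $(n_h/h)^2$ — which forces the specific choice $m_h = 2\lfloor\log_b h\rfloor$, since $b^{m_h} n_{h,1}^2 \asymp b^{2h - m_h}h^2$ must be $o(b^{2h}/h^2)$, i.e. $m_h > 4\log_b h$ is false but $b^{-m_h}h^4 \to 0$ requires $m_h > 4\log_b h$ — so again I should double-check the constant: with $m_h = 2\log_b h$ one gets $b^{-m_h}h^4 = h^{-2}h^4 = h^2 \not\to 0$. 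This means the naive variance bound $\mathbb{E}[C_{h,i}^2]\leq n_{h,i}^2$ is too lossy and one must instead use $\mathrm{Var}(C_{h,i} \mid \text{-}) = O(n_{h,i})$ or the sharper $\mathbb{E}[(C_{h,i}-\mathbb{E}[C_{h,i}])^2] = O(n_{h,i})$ coming from the fact that $C_{h,i}$ is a sum of weakly dependent indicators over the sub-tree — then $b^{m_h} n_{h,1} = n_h(1+o(1))$ finishes it. So the real obstacle, and the place to be careful, is establishing a second-moment bound on $C_{h,i}$ of the right (linear, not quadratic) order; this is standard for percolation clusters on a fixed finite tree but should be stated explicitly, e.g. via $\mathrm{Var}(C_{h,i}\mid \eta) \leq \mathbb{E}[C_{h,i}^2\mid\eta]$ and a direct computation that $\mathbb{E}[C_{h,i}^2]$ is $O(n_{h,i})$ under $p_h = e^{-c/h}$ because the expected cluster size is $\Theta(1)$-bounded per starting vertex in this regime.
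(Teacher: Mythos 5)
Your overall route is the same as the paper's: the exact decomposition $G_{h}^{{\rm reg}} = C_{h,0} + \sum_{i} C_{h,i}\mathds{1}_{\{\eta_{h,i}=0\}}$, discarding $C_{h,0}=O(b^{m_h})$, conditioning on the $(\eta_{h,i})$ to replace $C_{h,i}$ by $\mathbb{E}[C_{h,i}]$ via Chebyshev, substituting Lemma \ref{lemma18}, and using $\sum_i \mathds{1}_{\{\eta_{h,i}=0\}} = b^{m_h} - \sum_i \mathds{1}_{\{\eta_{h,i}\geq 1\}}$ together with $b^{m_h}n_{h,1}=n_h+O(h^2)$ and $m_h^2 = o(h)$. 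Your self-correction about $m_h^2/h \to 0$ lands in the right place, and your diagnosis that the crude bound $\mathbb{E}[(C_{h,i}-\mathbb{E}[C_{h,i}])^2]\leq n_{h,i}^2$ only yields $\Theta(n_h^2h^{-2})$ rather than $o(n_h^2h^{-2})$ is correct.

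However, the repair you propose for that step is a genuine gap: the claim $\mathbb{E}[(C_{h,i}-\mathbb{E}[C_{h,i}])^2]=O(n_{h,i})$ is false, and the heuristic offered for it (``the expected cluster size is $\Theta(1)$-bounded per starting vertex in this regime'') is wrong — in the supercritical regime $p_h=e^{-c/h}$ the root cluster of the subtree has expected size $\sim n_{h,i}e^{-c}$, and the indicators $\mathds{1}_{\{u\leftrightarrow v_i\}}$ are strongly positively correlated for vertices sharing a long path to $v_i$. Indeed, conditioning on the status of a single edge from $v_i$ to a child already contributes $p_h(1-p_h)\bigl(\mathbb{E}[C_{\text{child}}]\bigr)^2 \asymp n_{h,i}^2/h \gg n_{h,i}$ to the variance, so the true order is $\Theta(n_{h,i}^2/h)$, not $O(n_{h,i})$. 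What the paper uses instead is Bertoin's estimate $\mathbb{E}[(C_{h,i}-\mathbb{E}[C_{h,i}])^2]=o(n_{h,i}^2)$ (from the proof of his Corollary 1); summing this little-o over the $b^{m_h}$ subtrees, whose combined squared sizes are $\Theta(n_h^2h^{-2})$ by the choice $m_h=2\lfloor\log_b h\rfloor$, gives exactly the required $o(n_h^2h^{-2})$. Your strategy survives if you replace your linear bound by this $o(n_{h,i}^2)$ (or the sharper $O(n_{h,i}^2/h)$) estimate, but as written the key second-moment step rests on a false claim.
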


\begin{proof}
We denote by $C_{h,0}$ the number of vertices of the tree $T_{h}^{{\rm reg}}$ at height less or equal to $m_{h}-1$ that are connected to the root after percolation with parameter $p_{h}$. Then, it should be plain that
\begin{eqnarray*}
G_{h}^{{\rm reg}} = C_{h,0} + \sum_{i=1}^{b^{m_{h}}} C_{h,i}  \mathds{1}_{\left \{ \eta_{h,i} = 0 \right \}}.
\end{eqnarray*}

\noindent We observe that the sequences of random variables $(\eta_{h,i}, 1 \leq i \leq b^{m_{h}})$ and $(C_{h,i}, 1 \leq i \leq b^{m_{h}})$ are independent. By conditioning first on the value of the random variables $(\eta_{h,i}, 1 \leq i \leq b^{m_{h}})$ and then taking expectation, we obtain that
\begin{eqnarray*}
\mathbb{E} \left[ \left( G_{h}^{{\rm reg}} - C_{h,0} - \sum_{i=1}^{b^{m_{h}}} \mathbb{E} \left[C_{h,i} \right]  \mathds{1}_{\left \{ \eta_{h,i} = 0 \right \}} \right)^{2} \right] & = & \mathbb{E}\left[ \sum_{i=1}^{b^{m_{h}}} \mathbb{E} \left[ \left(  C_{h,i} - \mathbb{E}[C_{h,i} ] \right)^{2}\right] \mathds{1}_{\left \{ \eta_{h,i} = 0 \right \}}  \right] \\
& = &   \sum_{i=1}^{b^{m_{h}}} \mathbb{E} \left[ \left(  C_{h,i} - \mathbb{E}[C_{h,i} ] \right)^{2}\right]  \mathbb{P}\left ( \eta_{h,i} = 0 \right ). 
\end{eqnarray*}

\noindent On the one hand, $\mathbb{P}( \eta_{h,i} = 0) \leq 1$. On the other hand, Bertoin \cite[Section 3]{Be1} has proven in \cite[Proof of Corollary 1]{Be1} that $\mathbb{E} [ (  C_{h,i} - \mathbb{E}[C_{h,i} ] )^{2}] = o(n_{h,i}^{2})$. Thus,
\begin{eqnarray*}
\mathbb{E} \left[ \left( G_{h}^{{\rm reg}} - C_{h,0} - \sum_{i=1}^{b^{m_{h}}} \mathbb{E} \left[C_{h,i} \right]  \mathds{1}_{\left \{ \eta_{h,i} = 0 \right \}} \right)^{2} \right]  = \sum_{i=1}^{b^{m_{h}}} o(n_{h,i}^{2}) = o(n_{h}^{2}h^{-2}).
\end{eqnarray*}

\noindent The above estimate and Chebyshev's inequality imply that
\begin{eqnarray*}
 G_{h}^{{\rm reg}} = C_{h,0} + \mathbb{E}[C_{h,1} ]\sum_{i=1}^{b^{m_{h}}}  \mathds{1}_{\left \{ \eta_{h,i} = 0 \right \}} + o_{{\rm p}}(n_{h} h^{-1})
\end{eqnarray*}

\noindent since $(C_{h,i}, 1 \leq i \leq b^{m_{h}})$ is a sequence of i.i.d.\ random variables. Moreover, we notice that $0 \leq C_{h,0} < b^{m_{h}+1} = o(n_{h} h^{-1})$. Hence 
\begin{eqnarray} \label{eq90}
G_{h}^{{\rm reg}}=  \mathbb{E}[C_{h,1} ]\sum_{i=1}^{b^{m_{h}}}  \mathds{1}_{\left \{ \eta_{h,i} = 0 \right \}} + o_{{\rm p}}(n_{h} h^{-1}).
\end{eqnarray}

\noindent We note that 
\begin{eqnarray} \label{eq91}
 \sum_{i=1}^{b^{m_{h}}} \mathds{1}_{  \left \{ \eta_{h,i} = 0 \right \} } = b^{m_{h}} - \sum_{i=1}^{b^{m_{h}}} \mathds{1}_{  \left \{ \eta_{h,i} \geq 1 \right \} }.
\end{eqnarray}

\noindent Finally, our claim follows by combining (\ref{eq91}) and Lemma \ref{lemma18} into (\ref{eq90}).
\end{proof}

We can now complete the proof of Theorem \ref{Theo4}. 

\begin{proof}[Proof of Theorem \ref{Theo4}]
From Lemma \ref{lemma19} we deduce that 
\begin{align*}
& \left( \frac{G_{h}^{{\rm reg}}}{n_{h}}- e^{-c} \right)h - ce^{-c} \log_{b} h   \\
& \hspace*{10mm} =  -\frac{n_{h,1} h}{n_{h}} e^{-c} \sum_{i=1}^{b^{m_{h}}} \mathds{1}_{\left \{ \eta_{h,i} \geq 1 \right \}} + ce^{-c}\lfloor \log_{b} h \rfloor - ce^{-c} \{ \log_{b} h \}  + \frac{c}{b-1}e^{-c} + o_{{\rm p}}(1).
\end{align*}

\noindent Since $n_{h}^{-1} n_{h,1} = b^{-m_{h}} + o(b^{-m_{h}})$ and 
\begin{eqnarray*}
\mathbb{E} \left[\sum_{i=1}^{b^{m_{h}}} \mathds{1}_{\left \{ \eta_{h,i} \geq 1 \right \}} \right] = \sum_{i=1}^{b^{m_{h}}} \mathbb{P}\left (\eta_{h,i} \geq 1 \right ) = b^{m_{h}}(1-e^{-cm_{h}h^{-1}}),
\end{eqnarray*}

\noindent we conclude by the Markov inequality that
\begin{align*}
& \left( \frac{G_{h}^{{\rm reg}}}{n_{h}}- e^{-c} \right)h - ce^{-c} \log_{b} h  \\
& \hspace*{10mm} = - h b^{-m_{h}} e^{-c} \sum_{i=1}^{b^{m_{h}}} \mathds{1}_{\left \{ \eta_{h,i} \geq 1 \right \}} + ce^{-c}\lfloor \log_{b} h \rfloor - ce^{-c} \{ \log_{b} h \}  + \frac{c}{b-1}e^{-c} + o_{{\rm p}}(1).
\end{align*}

\noindent Our claim follows by \cite[Corollary 3.4]{Be2} that establishes the convergence in distribution
 \begin{eqnarray*}
 h b^{-m_{h}}  \sum_{i=1}^{b^{m_{h}}} \mathds{1}_{\left \{ \eta_{h,i} \geq 1 \right \}} - c\lfloor \log_{b} h \rfloor \xrightarrow[ ]{d} L_{\rho }(c),
\end{eqnarray*}

\noindent in the regime where $h \rightarrow \infty$ with $\{\log_ {b} h \} \rightarrow \rho \in [0,1)$.
\end{proof}

\begin{remark}
One could have finished the proof of Theorem \ref{Theo4} along the same lines as for Theorem \ref{Theo2}, i.e., by using a classical limit result for triangular arrays. But for the sake of avoiding repetition, we decided to directly apply a result proven by Bertoin \cite{Be2} which is enough for our purpose. 
\end{remark}

\paragraph{Acknowledgements.}
This work is supported by the Knut and Alice Wallenberg
Foundation, a grant from the Swedish Research Council and The Swedish Foundations' starting grant from Ragnar S\"oderbergs Foundation.


\providecommand{\bysame}{\leavevmode\hbox to3em{\hrulefill}\thinspace}
\providecommand{\MR}{\relax\ifhmode\unskip\space\fi MR }
\providecommand{\MRhref}[2]{%
  \href{http://www.ams.org/mathscinet-getitem?mr=#1}{#2}
}
\providecommand{\href}[2]{#2}


\begin{appendices}

\section{Distances in split trees} \label{sec3}

The purpose of this section is to establish some general results on the distribution of the
distances between uniform chosen vertices and uniformly chosen balls in $T_{n}^{{\rm sp}}$ when $n \rightarrow \infty$. The results can be seen as a complement (or extension) of those of Devroye
\cite{Luc1999} and Holmgren \cite{Holm2012}. Let $H_{n}$  be the height of $T_{n}^{{\rm sp}}$, i.e., the maximal distance between the root and any leaf in $T_{n}^{{\rm sp}}$. We deduce the following moment estimate for $H_{n}$. For $y \in \mathbb{R}$, recall that $\lceil y \rceil$ denotes the least integer greater than or equal to $y$. Similarly, $\lfloor y \rfloor$ denotes the greatest integer less than or equal to $y$

\begin{lemma}  \label{lemma7}
Assume that Condition \ref{Cond1} is fulfilled. For all $r > 0$, we have that $\sup_{n \geq 1} \mathbb{E}[H_{n}^{r}] \ln^{-r} n < \infty$.
\end{lemma}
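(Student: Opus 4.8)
The plan is to bound $\mathbb{E}[H_{n}^{r}]$ by reducing it to a tail estimate for $H_{n}$ and then controlling that tail by a first--moment (union) bound over the vertices of $\mathbb{T}$ at a fixed depth, for which the stochastic domination~(\ref{eq19}) is tailor--made. By the layer--cake identity $\mathbb{E}[H_{n}^{r}]=\sum_{j\geq 1}\big(j^{r}-(j-1)^{r}\big)\mathbb{P}(H_{n}\geq j)\leq C_{r}\sum_{j\geq 1}j^{r-1}\mathbb{P}(H_{n}\geq j)$ for a constant $C_{r}$; since $\mathbb{P}(H_{n}\geq j)\leq 1$, the part of the sum with $j\leq A\ln n$ contributes at most $C_{r}(A\ln n)^{r}=O(\ln^{r}n)$ for any fixed $A$, so it suffices to choose $A$ (depending only on the law of $V_{1}$) such that $\mathbb{P}(H_{n}\geq j)$ decays geometrically in $j$ once $j>A\ln n$; the remaining tail of the sum is then $O(1)$.

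For the tail estimate I would argue as follows. If $H_{n}\geq j$ then $T_{n}^{{\rm sp}}$ contains a vertex at depth $j$, hence a vertex $u$ at depth $j-1$ whose bucket overflowed during the construction, i.e.\ with $n_{u}\geq s+1$. Since $\mathbb{T}$ has $b^{j-1}$ vertices at depth $j-1$, a union bound gives $\mathbb{P}(H_{n}\geq j)\leq b^{j-1}\,\mathbb{P}(n_{u}\geq s+1)$ for one such $u$. When $s_{1}=0$, relation~(\ref{eq19}) says that $n_{u}$ is stochastically dominated by $\mathrm{Bin}\big(n,\prod_{k=1}^{j-1}W_{u,k}\big)$ with the $W_{u,k}$ i.i.d.\ copies of $V_{1}$, whence
\[
\mathbb{P}(n_{u}\geq s+1)\;\leq\;\mathbb{E}\!\left[\binom{n}{s+1}\prod_{k=1}^{j-1}W_{u,k}^{\,s+1}\right]\;\leq\;\frac{n^{s+1}}{(s+1)!}\,\big(\mathbb{E}[V_{1}^{\,s+1}]\big)^{j-1}.
\]
Under Condition~\ref{Cond1} one has $\mathbb{P}(V_{1}=1)=0$, and $\mathbb{P}(V_{1}>0)>0$ because $\mathbb{E}[V_{1}]=1/b>0$; as $s\geq 1$ this forces the strict inequality $\mathbb{E}[V_{1}^{\,s+1}]<\mathbb{E}[V_{1}]=1/b$, so we may write $\mathbb{E}[V_{1}^{\,s+1}]=\rho/b$ with $\rho\in(0,1)$. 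Then $b^{j-1}\mathbb{P}(n_{u}\geq s+1)\leq n^{s+1}\rho^{\,j-1}/(s+1)!$, and choosing $A$ with $A\ln(1/\rho)\geq 2(s+1)$ makes the right--hand side at most $\rho^{-1}\rho^{\,j/2}/(s+1)!$ for every $j>A\ln n$, which is the desired geometric decay. Feeding this into the layer--cake sum completes the proof when $s_{1}=0$, and in fact yields $\sup_{n}\mathbb{E}[H_{n}^{r}]\ln^{-r}n\leq C_{r}A^{r}+O(1)$.

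The step I expect to be the main obstacle is the additive correction $+s_{1}j$ in~(\ref{eq19}) when $s_{1}\geq 1$ (which occurs, for instance, for median--of--$(2k+1)$ trees, where $s_{1}=k$): the bound above on $\mathbb{P}(n_{u}\geq s+1)$ then becomes vacuous as soon as $s_{1}j>s$, since the deterministic $s_{1}$--balls alone can account for the overflow. The way around this is to rerun the union bound along a root--to--leaf path $v_{0},\dots,v_{j}$ applied to the number of balls sent from $v_{k}$ to $v_{k+1}$ \emph{by a random choice} (multinomial routing, rather than the deterministic allocation): a ball is placed deterministically into a bucket at most once along its path, so $\{n_{v_{k+1}}\geq s+1\}$ forces at least $s+1-s_{1}$ random routing events at $v_{k}$, an exponent which is still $\geq 1$ and is $\geq 2$ except in the single degenerate case $b=2$, $s=s_{1}=1$, $s_{0}=0$. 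One then telescopes binomial--moment estimates along the path, keeping at each level the sharper domination $n_{v_{k}}\leq_{\mathrm{st}}\mathrm{Bin}(n,\prod W)+s_{1}j$ so as not to destroy the product structure, and — modulo this extra care — recovers a bound of the same shape $n^{\,s+1-s_{1}}(\rho')^{\,j}$ with $\rho'\in(0,1)$ (using $\mathbb{E}[V_{1}^{\,s+1-s_{1}}]<1/b$), which is handled exactly as in the case $s_{1}=0$. Alternatively, one may simply invoke the height estimates for general split trees already obtained by Devroye~\cite{Luc1999}. In either case, the bookkeeping for the deterministic $s_{1}$--balls is the only genuinely delicate point; the rest is the elementary union bound and moment computation above.
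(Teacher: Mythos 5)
Your reduction (moments from a tail bound at scale $\ln n$) is sound, and in the case $s_{1}=0$ your argument is complete and in fact more self-contained than the paper's: the layer--cake identity plus the union bound $\mathbb{P}(H_{n}\geq j)\leq b^{j-1}\binom{n}{s+1}\big(\mathbb{E}[V_{1}^{s+1}]\big)^{j-1}$, with $\mathbb{E}[V_{1}^{s+1}]<1/b$ guaranteed by Condition \ref{Cond1}, does give the required geometric decay beyond $A\ln n$, and nothing else is needed. The paper instead handles all parameter choices at once by quoting Devroye's explicit tail inequality (\ref{eq43}) (from the proof of Theorem 1 in \cite{Luc1999}), choosing $k=k'=\lfloor c_{r}\ln n\rfloor$ and $t,t'$ so large that $bm(t)<1$ and $bm(t')<1$ (possible under Condition \ref{Cond1}), which yields (\ref{eq42}), and then beats the crude bound $H_{n}\leq n$ on the complementary event.

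The genuine gap is exactly where you suspect it: the case $s_{1}\geq 1$. Your sketched repair does not go through as described. First, the per-level estimate you would telescope is of the form $\mathbb{P}\big(n_{v_{k+1}}\geq s+1\mid n_{v_{k}},\,V\big)\leq\binom{n_{v_{k}}}{s+1-s_{1}}W^{s+1-s_{1}}$, and since $n_{v_{k}}$ can be of order $n$ at every level, the product along a path of length $j$ produces a factor of order $n^{(s+1-s_{1})j}$, not $n^{s+1-s_{1}}$; obtaining the shape $n^{\,s+1-s_{1}}(\rho')^{j}$ requires the two-scale bookkeeping ($k$, $k'$ and the buffer $3l$) that is precisely the content of Devroye's proof, so "modulo this extra care" is hiding the whole difficulty. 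Second, the degenerate case you flag, $b=2$, $s=s_{1}=1$, $s_{0}=0$, is admissible under (\ref{eq3}), and there $\mathbb{E}[V_{1}^{\,s+1-s_{1}}]=\mathbb{E}[V_{1}]=1/b$ exactly, so the claimed strict inequality — and with it any geometric gain against the $b^{j}$ entropy factor — fails. Your fallback of invoking Devroye's height estimates is legitimate and is in fact the paper's proof, but note that what must be invoked is the quantitative inequality (\ref{eq43}) with freely chosen exponents $t,t'$ (so that the tail can be made $o(n^{-r})$), not merely the in-probability convergence $H_{n}/\ln n\to\mathrm{const}$ from Devroye's theorem, which by itself gives no moment control against the crude bound $H_{n}\leq n$. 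With that substitution your layer--cake framework closes the argument for all admissible $(b,s,s_{0},s_{1})$.
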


\begin{proof}
We claim that for all $r > 0$ there exists $c_{r} >0$ such that 
\begin{eqnarray} \label{eq42}
\lim_{n \rightarrow \infty} n^{r}\mathbb{P}(H_{n} \geq (3s_{1}+4)\lfloor c_{r} \ln n \rfloor) = 0.
\end{eqnarray}

\noindent Then, the bound $H_{n} \leq n$ implies that 
\begin{eqnarray*}
\mathbb{E}[H_{n}^{r}] \leq (3s_{1} + 4) c_{r} \ln^{r} n + n^{r} \mathbb{P}(H_{n} \geq (3s_{1}+4)\lfloor c_{r} \ln n \rfloor)
\end{eqnarray*}

\noindent which combined with (\ref{eq42}) allows us to conclude with the proof of Lemma \ref{lemma7}. 

Therefore, it only remains to prove the claim in (\ref{eq42}).  Devroye \cite{Luc1999} has shown that for integers $0 \leq k^{\prime} \leq k$ and $l = k^{\prime}(s_{1}+1)$ such that $s_{1}k^{\prime} < l$, and real numbers $t, t^{\prime} > 0$, we have that
\begin{eqnarray} \label{eq43}
\mathbb{P}(H_{n} \geq k + 3l) \leq 2b^{-k} + b^{k}(ne)^{t}b^{2kt/l}m(t)^{k} + b^{k}(s_{1}(k-k^{\prime}+1)e)^{t^{\prime}} b^{2kt^{\prime}/l}m(t^{\prime})^{k^{\prime}},
\end{eqnarray}

\noindent where $m(t) = \mathbb{E}[V_{1}^{t}]$ for $t >0$; see proof of \cite[Theorem 1]{Luc1999} for details. Then consider the estimate in (\ref{eq43}) with $k = k^{\prime} =  \lfloor c_{r} \ln n \rfloor$ and $l = k^{\prime}(s_{1}+1)$. Then choose $t, t^{\prime} \geq 0$ large enough such that $b m(t) < 1$ and $b m(t^{\prime}) < 1$. This is possible because $\mathbb{P}(V_{1} = 1) =0$ by Condition \ref{Cond1}, and thus, $m(t) \rightarrow 0$ as $t \rightarrow \infty$; see \cite[Lemma 1]{Luc1999}. Finally, (\ref{eq42}) follows immediately by taking $c_{r} > \max(r/\ln b, -(r+t)/\ln (bm(t)), -r/\ln (bm(t^{\prime})))$.
\end{proof}

For each fixed $n \in \mathbb{N}$, let $b_{1}$ be a uniformly distributed ball on the set $\{1,
\dots, n\}$ of balls in $T_{n}^{{\rm sp}}$. Recall that we denote by $D_{n}(b_{1})$ the height (or
depth) of the ball $b_{1}$ in $T_{n}^{{\rm sp}}$, i.e., the number of edges of $T_{n}^{{\rm sp}}$ which are between the root and the vertex where the ball $b_{1}$ is stored. 

\begin{lemma} \label{lemma5}
Assume that Condition \ref{Cond1} is fulfilled. 
\begin{itemize}
\item[(i)] Recall that $\varpi: \mathbb{R} \rightarrow \mathbb{R}$ denotes the function in (\ref{eq10}). Then $ \displaystyle 
\mathbb{E}[D_{n}(b_{1})] = \mu^{-1} \ln n + \varpi(\ln n) +o(1)$. 

\item[(ii)] We also have $ \displaystyle 
\mathbb{E}[(D_{n}(b_{1}) - \mu^{-1}\ln n)^{2}] = \mu^{-3} \sigma^{2} \ln n +o(\ln n)$. 

\item[(iii)] Furthermore, $ \displaystyle 
 \mathbb{E} \left[  \left| D_{n}(b_{1}) - \mu^{-1} \ln n  \right|^{3}\right]  = O (\ln^{\frac{3}{2}}n )$. 

\item[(iv)] As a consequence, we conclude that
$ \displaystyle 
\lim_{n \rightarrow \infty} D_{n}(b_{1}) (\ln n)^{-1} = 1/ \mu$, in probability.
\end{itemize} 
\end{lemma}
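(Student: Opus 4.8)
Part~(i) is, up to the factor $n$, precisely the total-path-length asymptotics (\ref{eq10}): conditionally on $T_n^{\rm sp}$ the ball $b_1$ is uniform among the $n$ balls, so $\mathbb{E}[D_n(b_1)\mid T_n^{\rm sp}]=n^{-1}\Psi(T_n^{\rm sp})$, and taking expectations and applying (\ref{eq10}) gives $\mathbb{E}[D_n(b_1)]=\mu^{-1}\ln n+\varpi(\ln n)+o(1)$. Part~(iv) is then immediate from~(ii): $\mathbb{E}\big[(D_n(b_1)/\ln n-1/\mu)^2\big]=\mu^{-3}\sigma^2/\ln n+o(1/\ln n)\to 0$, so $D_n(b_1)/\ln n\to 1/\mu$ in $L^2$, hence in probability. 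Thus the substantive work is~(ii) and~(iii).

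The plan for those is to identify $D_n(b_1)$, up to lower-order terms, with a first-passage time of a renewal walk. Conditioning on $T_n^{\rm sp}$ and using that $\sum_{v:\,d_n(v)=k}n_v$ counts the balls at depth at least $k$ gives $\mathbb{P}(D_n(b_1)\ge k)=n^{-1}\mathbb{E}\big[\sum_{v:\,d_n(v)=k}n_v\big]$ for every $k\ge 1$. Each subtree count satisfies $n_v\approx{\rm binomial}(n,\prod_{j\le k}W_{v,j})$ by the stochastic sandwich (\ref{eq19}), with $W_{v,1},\dots,W_{v,k}$ i.i.d.\ copies of $V_1$, so $\prod_{j\le k}W_{v,j}\overset{d}{=}e^{-S_k}$ with $S_k=\sum_{j=1}^k(-\ln V_j')$ as in (\ref{eq11}). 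Taking expectations, using that these binomials vanish once $ne^{-S_k}$ falls below a constant, and passing to the size-biased walk $S'_k$ that appears along the tagged path of $b_1$ (so that $\mathbb{E}[-\ln W_1]=\mu$ and $\mathrm{Var}(-\ln W_1)=\sigma^2$, the quantities in (\ref{eq56})), one gets $\mathbb{P}(D_n(b_1)\ge k)=\mathbb{P}(S'_k\le\ell_n)+(\text{error})$ for an effective level $\ell_n=\ln n+O(1)$, which heuristically says $D_n(b_1)=\tau'(\ell_n)-1$ with $\tau'(x)=\inf\{k\ge 1:S'_k>x\}$. Classical renewal theory then gives $\mathrm{Var}(\tau'(x))=\sigma^2 x/\mu^3+o(x)$ and $\mathbb{E}[|\tau'(x)-x/\mu|^3]=O(x^{3/2})$; combining the first with~(i) — which makes the squared bias $(\mathbb{E}[D_n(b_1)]-\mu^{-1}\ln n)^2$ bounded, hence negligible against $\ln n$ — yields~(ii), and the second, together with Lemma~\ref{lemma7} to dispose of the event that $D_n(b_1)$ greatly exceeds $\ln n$, yields~(iii). (The refined renewal estimates (\ref{eq11}) and (\ref{eq33}) are what pin down $\ell_n$ and the $O(1)$ term in $\mathbb{E}[\tau'(x)]$, thereby re-deriving~(i) internally; they are also the key input for the companion statement about a uniform random vertex.)

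Concretely, I would start from $\mathbb{E}[D_n(b_1)^2]=\sum_{k\ge1}(2k-1)\mathbb{P}(D_n(b_1)\ge k)$ and the layer-cake formula for the third moment, truncate at $k_0=A\ln n$ for a large constant $A$, and use $D_n(b_1)\le H_n$ together with the tail bound (\ref{eq42}) from the proof of Lemma~\ref{lemma7} — which gives $\mathbb{P}(H_n>A\ln n)=O(n^{-3})$ once $A$ is large — so that the range $\{k>k_0\}$ contributes $o(1)$; for $\{k\le k_0\}$ one replaces $n_v$ by its binomial proxy through (\ref{eq19}) and binomial concentration and evaluates the resulting renewal sums. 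The \emph{main obstacle} is the precision of this replacement in the critical window $k\approx\mu^{-1}\ln n$: there the additive error $\pm O(k)=\pm O(\ln n)$ permitted by (\ref{eq19}) is of exactly the order one must control to precision $o(\ln n)$ after being weighted by $2k-1$, so one needs genuinely sharper control of the subtree counts near the leaf level — of the kind developed by Devroye~\cite{Luc1999} and Holmgren~\cite{Holm2012} — together with the precise $o(\cdot)$ remainders in (\ref{eq11}) and (\ref{eq33}), rather than mere order-of-magnitude estimates.
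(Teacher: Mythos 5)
Parts (i) and (iv) match the paper exactly: (i) is the identity $\mathbb{E}[D_n(b_1)]=n^{-1}\mathbb{E}[\Psi(T_n^{\rm sp})]$ plus (\ref{eq10}), and (iv) is Chebyshev from (ii). For (ii) and (iii), however, you take a genuinely different route from the paper, and your route has a gap at its central step — one you yourself flag as the ``main obstacle'' but do not close. Your plan is to re-derive the second and third moments of $D_n(b_1)$ from scratch by writing $\mathbb{P}(D_n(b_1)\ge k)=n^{-1}\mathbb{E}\bigl[\sum_{v:\,d_n(v)=k}n_v\bigr]$, replacing $n_v$ by its binomial proxy via (\ref{eq19}), and identifying $D_n(b_1)$ with a first-passage time of the size-biased renewal walk. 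The renewal asymptotics you quote ($\mathrm{Var}(\tau'(x))=\sigma^2x/\mu^3+o(x)$, third absolute moment $O(x^{3/2})$) are correct, and the size-biased constants do match (\ref{eq56}). But, as you note, the additive $\pm O(k)$ slack in (\ref{eq19}) is of exactly the order $o(\ln n)$ that must be controlled in the critical window $k\approx\mu^{-1}\ln n$, and saying that one needs ``sharper control of the kind developed by Devroye and Holmgren'' is not a proof of that control. That sharper control is precisely the content of the results the paper invokes, so the hardest part of (ii)--(iii) is deferred rather than established.

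The paper avoids this entirely: it writes $\mathbb{E}[(D_n(b_1)-\mu^{-1}\ln n)^2]=n^{-1}\sum_{j=1}^n\mathbb{E}[(D_n(j)-\mu^{-1}\ln n)^2]$ and cites Holmgren's already-proven per-ball estimates — \cite[Theorem 1.3]{Holm2012} for the second moment and \cite[equation (3.62)]{Holm2012} for the third, which hold uniformly for labels $j\ge\lceil n\ln^{-2}n\rceil$ (resp.\ $j\ge\lceil n\ln^{-1}n\rceil$) — then handles the remaining small labels by the stochastic monotonicity $D_j(j)\le_{\rm st} D_n(j)\le_{\rm st} D_n(n)$ together with the height moment bound of Lemma \ref{lemma7}; the small-label ranges contain so few indices that their cruder bounds ($o(\ln^2 n)$ and $o(\ln^3 n)$ per term) still average to $o(\ln n)$. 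Your tail-sum identity does have the virtue of averaging over ball labels automatically, sidestepping the label-dependence that forces the paper into its three-range case analysis; but to make it rigorous you would essentially have to reprove Holmgren's renewal-theoretic moment estimates, which is exactly the work the paper outsources. As written, your argument for (ii) and (iii) is a plausible plan, not a proof.
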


\begin{proof}
We observe that $\mathbb{E}[D_{n}(b_{1})] = n^{-1}\mathbb{E} [  \sum_{i=1}^{n} D_{n}(i) ] = n^{-1} \mathbb{E}\left[  \Psi(T_{n}^{{\rm sp}}) \right]$. Then (i) follows immediately from the result in (\ref{eq10}). Turning our attention to the proof of (ii), we write
\begin{eqnarray} \label{eq23}
\mathbb{E}[(D_{n}(b_{1})- \mu^{-1}\ln n)^{2}] =  n^{-1} \mathbb{E}\Big[ \sum_{i=1}^{n}(D_{n}(i)  - \mu^{-1}\ln n)^{2} \Big].
\end{eqnarray}

\noindent Holmgren \cite[Proposition 1.1]{Holm2012} has shown that for $j \leq j^{\prime}$ we have that $D_{n}(j) \leq D_{n}(j^{\prime})$ in the stochastic sense. Moreover, $D_{j}(j) \leq D_{n}(j)$, for $n \geq j$, since a ball with label $j$ only move downward during the splitting process when new balls are added to the tree. Furthermore, it follows from \cite[Theorem 1.3]{Holm2012} that
\begin{eqnarray} \label{eq18}
 \mathbb{E}\left[ (D_{n}(j)  - \mu^{-1}\ln n)^{2} \right] = \mu^{-3} \sigma^{2}  \ln n + o(\ln n), \hspace*{5mm} \text{uniformly for} \hspace*{5mm} \left \lceil n \ln^{-1}n \right \rceil \leq j \leq n.
\end{eqnarray}

\noindent Since $D_{n}(j)$ can be stochastically dominated from above and below by $D_{n}(n)$ and $D_{j}(j)$, for $1 \leq j \leq n$, respectively, we deduce that
\begin{eqnarray} \label{eq20}
\mathbb{E}\left[ (D_{n}(j)  - \mu^{-1}\ln n)^{2} \right] & \leq &  \mathbb{E}\left[ (D_{n}(n)  - \mu^{-1}\ln n)^{2} \right] +  \mathbb{E}\left[ (D_{j}(j)  - \mu^{-1}\ln n)^{2} \right] \nonumber \\
& \leq &  \mathbb{E}\left[ (D_{n}(n)  - \mu^{-1}\ln n)^{2} \right] +  4\mathbb{E}\left[ (D_{j}(j)  - \mu^{-1}\ln j)^{2} \right]  + 4 \mu^{-2} \left|  \ln j-  \ln n   \right|^{2} \nonumber \\
& = & o(\ln^{2} n), 
\end{eqnarray}

\noindent uniformly for $\left \lceil  n \ln^{-2}n \right \rceil \leq j < \left \lceil n \ln^{-1} n \right \rceil$; We have used the inequality $|x-y|^{2} \leq 4x^{2} + 4y^{2}$  for $x,y \geq 0$. On the other hand, Lemma \ref{lemma7} implies that
\begin{eqnarray} \label{eq21}
\mathbb{E}\left[ (D_{n}(j)  - \mu^{-1}\ln n)^{2} \right] & \leq &  4 \mathbb{E}\left[H_{n}^{2} \right] + 4\mu^{-2} \ln^{2}n = o(\ln^{3}n)
\end{eqnarray}

\noindent uniformly for $1 \leq j < \left \lceil n \ln^{-2} n \right \rceil$. Then the combination (\ref{eq23}), (\ref{eq18}), (\ref{eq20}) and (\ref{eq21}) imply (ii). 

We now prove (iii). We observe that
\begin{eqnarray} \label{eq24}
\mathbb{E} \left[  \left| D_{n}(b_{1}) - \mu^{-1} \ln n   \right|^{3}\right] & = & n^{-1} \mathbb{E}\Big[ \sum_{i=1}^{n}\left| D_{n}(i)  - \mu^{-1}\ln n \right|^{3} \Big].
\end{eqnarray}

\noindent We also observe that 
\begin{eqnarray*}
 \mathbb{E} \left[  \left| D_{n}(j) - \mu^{-1} \ln n    \right|^{3}\right] & \leq &  \mathbb{E} \left[  \left| D_{n}(n) - \mu^{-1} \ln n   \right|^{3}\right] +  \mathbb{E} \left[  \left| D_{j}(j) - \mu^{-1} \ln n   \right|^{3}\right] \\
& \leq &  \mathbb{E} \left[  \left| D_{n}(n) - \mu^{-1} \ln n   \right|^{3}\right] +  8\mathbb{E} \left[  \left| D_{j}(j) - \mu^{-1} \ln j   \right|^{3}\right]  + 8 \mu^{-3} \left|  \ln j-  \ln n  \right|^{3},
\end{eqnarray*}

\noindent for $1 \leq j \leq n$; we have used the inequality $|x-y|^{3} \leq 8 x^{3} + 8 y^{3}$  for $x,y \geq 0$. From Holmgren \cite[equation (3.62)]{Holm2012} we deduce that
\begin{eqnarray} \label{eq15}
 \mathbb{E} \left[  \left| D_{n}(j) - \mu^{-1} \ln n   \right|^{3}\right] = O \left(\ln^{\frac{3}{2}} n \right) , \hspace*{4mm} \text{uniformly for} \hspace*{4mm} \left \lceil n \ln^{-2} n  \right \rceil \leq j \leq n.
\end{eqnarray}

We observe that $\mathbb{E}\left[ \left| D_{n}(j)  - \mu^{-1}\ln n \right|^{3} \right] \leq  8 \mathbb{E}\left[H_{n}^{3} \right] + 8\mu^{-3} \ln^{3}n$, uniformly for $1 \leq j < \left \lceil n \ln^{-2} n \right \rceil$. Then Lemma \ref{lemma7} implies that
\begin{eqnarray} \label{eq22}
 \mathbb{E} \left[  \left| D_{n}(j) - \mu^{-1} \ln n   \right|^{3}\right] = O \left( \ln^{3} n \right), \hspace*{4mm} \text{uniformly for} \hspace*{4mm} 1 \leq j < \left \lceil n \ln^{-2} n  \right \rceil.
\end{eqnarray}

\noindent Therefore, (iii) follows from (\ref{eq24}), (\ref{eq15}) and (\ref{eq22}).

The point (iv) follows immediately from (ii)  and a standard application of Chebyshev's inequality. 
\end{proof}

We turn our attention to the height of a random chosen vertex in $T_{n}^{{\rm sp}}$. For each fixed $n
\in \mathbb{N}$, let $u_{1}$ be a uniformly distributed vertex on the random split tree $T_{n}^{{\rm sp}}$ with $n$ balls. Recall that we denote by $d_{n}(u_{1})$ the height of the vertex
$u_{1}$ in $T_{n}^{{\rm sp}}$, i.e., the minimal number of edges of $T_{n}^{{\rm sp}}$ which are needed to connect the root and $u_{1}$. 

\begin{lemma} \label{lemma1}
Assume that Conditions \ref{Cond1} and \ref{Cond2} are fulfilled. 
\begin{itemize}
\item[(i)] Recall that $\zeta \in \mathbb{R}$ is the constant in Condition \ref{Cond2}. Then $ \displaystyle 
\mathbb{E}[d_{n}(u_{1})] = \mu^{-1} \ln n + \zeta \alpha^{-1}+o(1)$. 

\item[(ii)] We also have $ \displaystyle 
\mathbb{E}[(d_{n}(u_{1}) - \mu^{-1}\ln n)^{2}] = \mu^{-3} \sigma^{2} \ln n +o(\ln n)$. 

\item[(iii)] Furthermore, for $\delta > 1/2 - \varepsilon$, $ \displaystyle 
\mathbb{E} \left[  \left| d_{n}(u_{1}) - \mu^{-1} \ln n  \right|^{3}\right] = O (  \ln^{\frac{3}{2}+\delta} n )$. where $\varepsilon >0$ is the constant that appears in Condition \ref{Cond2}.

\item[(iv)] As a consequence, we conclude that
$ \displaystyle 
\lim_{n \rightarrow \infty} d_{n}(u_{1})(\ln n)^{-1} = 1/\mu$, in probability.
\end{itemize}
\end{lemma}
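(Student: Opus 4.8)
The plan is to work conditionally on $T_{n}^{{\rm sp}}$. Since $u_{1}$ is uniform among the $N$ vertices, for any function $g$ one has $\mathbb{E}[g(d_{n}(u_{1}))]=\mathbb{E}\bigl[N^{-1}\sum_{u\in T_{n}^{{\rm sp}}} g(d_{n}(u))\bigr]$, so all four assertions reduce to understanding the path-length type functionals $\sum_{u} g(d_{n}(u))$ for $g(x)=x$, $g(x)=(x-\mu^{-1}\ln n)^{2}$ and $g(x)=|x-\mu^{-1}\ln n|^{3}$. The first step is to show that replacing the random normalisation $N^{-1}$ by the deterministic $\mathbb{E}[N]^{-1}$ costs nothing at the relevant scale. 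Writing $N^{-1}-\mathbb{E}[N]^{-1}=(\mathbb{E}[N]-N)(N\mathbb{E}[N])^{-1}$ and using the crude bound $\sum_{u} g(d_{n}(u))\le N(H_{n}+\mu^{-1}\ln n)^{q}$ with $q=1,2,3$ respectively, the Cauchy--Schwarz inequality yields
\[
\Bigl|\,\mathbb{E}[g(d_{n}(u_{1}))]-\mathbb{E}[N]^{-1}\,\mathbb{E}\bigl[\textstyle\sum_{u} g(d_{n}(u))\bigr]\,\Bigr|\ \le\ \mathbb{E}[N]^{-1}\sqrt{\mathbb{E}\bigl[(H_{n}+\mu^{-1}\ln n)^{2q}\bigr]\,\mathrm{Var}(N)}\,.
\]
By Lemma \ref{lemma7} the first factor under the square root is $O(\ln^{2q}n)$, and by Remark \ref{remark1} (which upgrades Holmgren's bound using Condition \ref{Cond2}) we have $\mathrm{Var}(N)=o(n^{2}\ln^{-2-2\varepsilon}n)$; since $\mathbb{E}[N]=\alpha n+O(n\ln^{-1-\varepsilon}n)$, the right-hand side is $o(1)$, $o(\ln n)$ and $o(\ln^{2-\varepsilon}n)$ for $q=1,2,3$ respectively. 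In particular the case $q=3$ produces an error $O(\ln^{3/2+\delta}n)$ exactly when $\delta>\tfrac12-\varepsilon$, which explains the exponent in (iii).

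It then remains to compute $\mathbb{E}[N]^{-1}\mathbb{E}\bigl[\sum_{u}g(d_{n}(u))\bigr]$. For $g(x)=x$ we have $\sum_{u}d_{n}(u)=\Upsilon(T_{n}^{{\rm sp}})$, so (\ref{eq5}) together with Condition \ref{Cond2} gives $\mathbb{E}[\Upsilon(T_{n}^{{\rm sp}})]/\mathbb{E}[N]=\mu^{-1}\ln n+\zeta\alpha^{-1}+o(1)$, proving (i). For $g(x)=|x-\mu^{-1}\ln n|^{3}$ it is enough to have the one-sided bound $\mathbb{E}\bigl[\sum_{u}g(d_{n}(u))\bigr]=O(n\ln^{3/2}n)$, which I would derive either from the same renewal computation as for (ii) below, or more simply by comparing with the analogous sum over balls, $\sum_{u}g(d_{n}(u))\le\sum_{i=1}^{n}g(D_{n}(i))$ (each vertex shares its depth with a ball it contains), and invoking Lemma \ref{lemma5}(iii). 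This gives (iii), and then (iv) is immediate from (i)--(ii) by Chebyshev's inequality, exactly as in Lemma \ref{lemma5}(iv).

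The genuinely delicate point is (ii): one needs the \emph{sharp} second-order estimate
\[
\mathbb{E}\Bigl[\textstyle\sum_{u\in T_{n}^{{\rm sp}}}(d_{n}(u)-\mu^{-1}\ln n)^{2}\Bigr]\ =\ \alpha\,\mu^{-3}\sigma^{2}\,n\ln n+o(n\ln n),
\]
and here the crude ball/vertex comparison is useless since it only pins down the constant up to a bounded factor. I would establish this by the renewal-theoretic method used by Broutin and Holmgren \cite[Section 4]{Bro2012} to prove (\ref{eq5}): one sums over the vertices of the ambient infinite tree $\mathbb{T}$ the probability that a given vertex lies in $T_{n}^{{\rm sp}}$, weighted this time by $(\mathrm{depth}-\mu^{-1}\ln n)^{2}$, and feeds in the first- and second-order renewal asymptotics (\ref{eq11}) and (\ref{eq33}); equivalently, one adapts Holmgren's second-moment computation for ball depths \cite[Theorem 1.3]{Holm2012} to the vertex renewal function, the point being that the extra control $\mathbb{E}[N]=\alpha n+O(n\ln^{-1-\varepsilon}n)$ from Condition \ref{Cond2} is precisely what lets one transfer the ball statement to the vertex statement — just as it is needed to pass from (\ref{eq10}) to (\ref{eq5}). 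This renewal estimate is the technically heaviest part of the argument; everything else is the soft $N$-replacement above together with the already-available ball estimates of Lemma \ref{lemma5}.
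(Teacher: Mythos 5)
Your overall skeleton is exactly the paper's: condition on the tree, replace the random normalisation $N^{-1}$ by $\mathbb{E}[N]^{-1}$ at a cost controlled by Cauchy--Schwarz, $\mathrm{Var}(N)=o(n^{2}\ln^{-2-2\varepsilon}n)$ (Remark \ref{remark1}) and the height moments of Lemma \ref{lemma7}, and then evaluate the deterministically normalised path-length functionals; part (i) via (\ref{eq5}) and part (iv) via Chebyshev are identical to the paper. For (ii), however, the paper does not carry out the renewal computation you outline: it simply quotes Holmgren's existing estimate $\mathbb{E}\bigl[\sum_{u}(d_{n}(u)-\mu^{-1}\ln n)^{2}\bigr]=\alpha n\mu^{-3}\sigma^{2}\ln n+o(n\ln n)$ from \cite[Corollary 2.1]{Ceciliarxiv2010}. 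Your plan to re-derive this from (\ref{eq11}) and (\ref{eq33}) is the right underlying mathematics, but you leave the technically heaviest step entirely as a programme when it is in fact a citable result.

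The genuine flaw is in (iii). Your ``simpler'' route, the pointwise comparison $\sum_{u}g(d_{n}(u))\le\sum_{i=1}^{n}g(D_{n}(i))$ justified by ``each vertex shares its depth with a ball it contains'', fails whenever $s_{0}=0$: internal vertices of $T_{n}^{{\rm sp}}$ then contain no balls at all (this is the case for tries, which the lemma is meant to cover), so there is no depth-preserving injection from vertices to balls. Your alternative route (a third-moment renewal computation) is not sketched. The paper's actual argument for the key bound (\ref{eq63}) is different from both: it splits the sum at the threshold $|d_{n}(u)-\mu^{-1}\ln n|\le\ln^{\frac{1}{2}+\frac{\delta}{3}}n$, bounds the contribution of the ``good'' vertices trivially by $O(n\ln^{\frac{3}{2}+\delta}n)$, and controls the ``bad'' vertices via Holmgren's bound on their expected number (\cite[Theorem 1.2]{Holm2012}) together with the height tail (\ref{eq42}). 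It is this step, not only the $N$-replacement error you compute, that forces $\delta>1/2-\varepsilon$, and it yields only $O(n\ln^{\frac{3}{2}+\delta}n)$ for the main term, not the $O(n\ln^{\frac{3}{2}}n)$ you assert. As written, (iii) therefore has a hole that needs either the bad-vertices argument or a genuinely new computation.
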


\begin{proof}
We observe that 
\begin{eqnarray*}
\mathbb{E}[d_{n}(u_{1})] = \mathbb{E}\left[ \frac{1}{N} \sum_{u \in T_{n}^{{\rm sp}}} d_{n}(u) \right] = \frac{1}{\mathbb{E}[N]} \mathbb{E}\left[  \Upsilon(T_{n}^{{\rm sp}}) \right] + \mathbb{E}\left[ \left(\frac{1}{N} - \frac{1}{\mathbb{E}[N]} \right) \Upsilon(T_{n}^{{\rm sp}}) \right].
\end{eqnarray*}

\noindent It should be clear that (i) follows from Condition \ref{Cond2} and the result in (\ref{eq5}) by showing that
\begin{eqnarray} \label{eq6}
 \mathbb{E}\left[ \left(\frac{1}{N} - \frac{1}{\mathbb{E}[N]} \right) \Upsilon(T_{n}^{{\rm sp}}) \right] = o(1).
\end{eqnarray}

\noindent Therefore, we focus on the proof of (\ref{eq6}). 

We notice that 
\begin{eqnarray*}
\left| \frac{1}{N} - \frac{1}{\mathbb{E}[N]} \right| \Upsilon(T_{n}^{{\rm sp}}) = \left | \frac{N - \mathbb{E}[N]}{N \mathbb{E}[N] } \right| \sum_{u \in T_{n}^{{\rm sp}}} d_{n}(u) \leq   \frac{ \left| N - \mathbb{E}[N] \right| H_{n}}{\mathbb{E}[N]},
\end{eqnarray*}

\noindent where we recall that $H_{n}$ denotes the height of $T_{n}^{{\rm sp}}$. An application of the Cauchy–Schwarz inequality shows that
\begin{eqnarray*}
 \mathbb{E}\left[ \left(\frac{1}{N} - \frac{1}{\mathbb{E}[N]} \right) \Upsilon(T_{n}^{{\rm sp}}) \right]  \leq  \mathbb{E}^{-1}[N] (Var(N))^{\frac{1}{2}} \mathbb{E}^{1/2}[H_{n}^{2}] = o(1),
\end{eqnarray*}

\noindent where in the last step we used Remark \ref{remark1}, Condition \ref{Cond2} and Lemma \ref{lemma7}.

We turn our attention to the proof of (ii). We notice that 
\begin{align*}
\mathbb{E}[(d_{n}(u_{1}) - \mu^{-1}\ln n)^{2}] & =  \mathbb{E}\left[\frac{1}{N} \sum_{u \in T_{n}^{{\rm sp}}}(d_{n}(u)  - \mu^{-1}\ln n)^{2} \right] \\
& =  \frac{1}{\mathbb{E}[N]}  \mathbb{E}\left[\sum_{u \in T_{n}^{{\rm sp}}}(d_{n}(u)  - \mu^{-1}\ln n)^{2} \right] \\
& \hspace*{10mm} + \mathbb{E}\left[ \left(\frac{1}{N} - \frac{1}{\mathbb{E}[N]}\right) \sum_{u \in T_{n}^{{\rm sp}}}(d_{n}(u) - \mu^{-1}\ln n)^{2} \right].
\end{align*}

\noindent Holmgren \cite[Corollary 2.1]{Ceciliarxiv2010} has shown that 
\begin{eqnarray*}
\mathbb{E}\left[\sum_{u \in T_{n}^{{\rm sp}}}(d_{n}(u)  - \mu^{-1}\ln n)^{2} \right] = \alpha n \mu^{-3} \sigma^{2} \ln n + o(n\ln n).
\end{eqnarray*}

\noindent Then (ii) follows from Condition \ref{Cond2} and Remark \ref{remark1} by providing that 
\begin{eqnarray*}
 \mathbb{E}\left[ \left(\frac{1}{N} - \frac{1}{\mathbb{E}[N]}\right) \sum_{u \in T_{n}^{{\rm sp}}}(d_{n}(u)  - \mu^{-1}\ln n)^{2} \right] = o(\ln n).
\end{eqnarray*}

\noindent This is proved from similar arguments as in the proof of (\ref{eq6}). The details are omitted. 

We continue with the proof of (iii). We have that
\begin{align*}
\mathbb{E} \left[|d_{n}(u_{1}) - \mu^{-1}\ln n|^{3} \right] & =  \mathbb{E}\left[\frac{1}{N} \sum_{u \in T_{n}^{{\rm sp}}}|d_{n}(u)  - \mu^{-1}\ln n|^{3} \right] \\
& =  \frac{1}{\mathbb{E}[N]}  \mathbb{E}\left[\sum_{u \in T_{n}^{{\rm sp}}}|d_{n}(u)  - \mu^{-1}\ln n|^{3} \right] \\
& \hspace*{10mm} + \mathbb{E}\left[ \left(\frac{1}{N} - \frac{1}{\mathbb{E}[N]}\right) \sum_{u \in T_{n}^{{\rm sp}}}|d_{n}(u) - \mu^{-1}\ln n|^{3} \right].
\end{align*}

\noindent Suppose that we have proven that 
\begin{eqnarray} \label{eq63}
 \mathbb{E}\left[ \frac{1}{n}\sum_{u \in T_{n}^{{\rm sp}}}|d_{n}(u)  - \mu^{-1}\ln n|^{3} \right] = O \left( \ln^{ \frac{3}{2} + \delta} n \right),
\end{eqnarray}

\noindent for $\delta > 1/2 - \varepsilon$. Then (iii) follows from Condition \ref{Cond2} and by showing that
\begin{eqnarray*}
 \mathbb{E}\left[ \left(\frac{1}{N} - \frac{1}{\mathbb{E}[N]}\right) \sum_{u \in T_{n}^{{\rm sp}}}|d_{n}(u)  - \mu^{-1}\ln n|^{3} \right] \ln^{- \frac{3}{2} - \delta} n = o(1), \hspace*{5mm} \text{for} \hspace*{3mm} \delta > 1/2 - \varepsilon. 
\end{eqnarray*}

\noindent This can be proved by using similar arguments as in the proof of (\ref{eq6}) and the details are omitted. 

Finally, we check that (\ref{eq63}) holds. For $\delta > 1/2 - \varepsilon$ and $C > 0$, we notice that 
\begin{align*}
& \mathbb{E}\left[ \sum_{u \in T_{n}^{{\rm sp}}}|d_{n}(u)  - \mu^{-1}\ln n|^{3} \mathds{1}_{\left\{ |d_{n}(u)  - \mu^{-1}\ln n| > \ln^{\frac{1}{2}+\frac{\delta}{3}}n \right\}} \right] \\
& \hspace*{10mm} \leq 8 \mathbb{E}\left[ \left( H_{n}^{3}+ \mu^{-3}\ln^{3}n \right) \sum_{u \in T_{n}^{{\rm sp}}}\mathds{1}_{\left\{ |d_{n}(u)  - \mu^{-1}\ln n| > \ln^{\frac{1}{2}+\frac{\delta}{3}}n \right\}} \right] \\
& \hspace*{10mm} \leq 8(C^{3}+\mu^{-3})(\ln^{3}n) \mathbb{E}\left[ \sum_{u \in T_{n}^{{\rm sp}}}\mathds{1}_{\left\{ |d_{n}(u)  - \mu^{-1}\ln n| > \ln^{\frac{1}{2}+\frac{\delta}{3}}n \right\}} \right] + 8 n^{4} \mathbb{P}(H_{n} \geq C \ln n).
\end{align*}

\noindent On the one hand, Holmgren \cite[Theorem 1.2]{Holm2012} has shown that 
\begin{eqnarray*}
(C^{3}+\mu^{-3})(\ln^{3}n) \mathbb{E}\left[ \sum_{u \in T_{n}^{{\rm sp}}}\mathds{1}_{\left\{ |d_{n}(u)  - \mu^{-1}\ln n| > \ln^{\frac{1}{2}+\frac{\delta}{3}}n \right\}} \right] = o\left(n \ln^{\frac{3}{2} + \delta} n\right)
\end{eqnarray*}

\noindent (It is important to point out that the sum inside the expectation is what Holmgren \cite[Theorem 1.2]{Holm2012} calls the number of bad vertices in $T_{n}^{{\rm sp}}$). On the other hand, by (\ref{eq42}), we can choose $C > 0$ such that $8 n^{4} \mathbb{P}(H_{n} \geq C \ln n) = o (n \ln^{\frac{3}{2} + \delta} n )$. Hence,
\begin{eqnarray} \label{eq64}
\mathbb{E}\left[ \sum_{u \in T_{n}^{{\rm sp}}}|d_{n}(u)  - \mu^{-1}\ln n|^{3} \mathds{1}_{\left\{ |d_{n}(u)  - \mu^{-1}\ln n| > \ln^{\frac{1}{2}+\frac{\delta}{3}}n \right\}} \right] =  o\left(n \ln^{\frac{3}{2} + \delta} n\right).
\end{eqnarray}

\noindent We also note that 
\begin{eqnarray*}
\mathbb{E}\left[ \sum_{u \in T_{n}^{{\rm sp}}}|d_{n}(u)  - \mu^{-1}\ln n|^{3} \mathds{1}_{\left\{
            |d_{n}(u)  - \mu^{-1}\ln n| \leq \ln^{\frac{1}{2}+\frac{\delta}{3}}n \right\}} \right] = O \left(
n \ln^{\frac{3}{2} + \delta} n \right),
\end{eqnarray*}

\noindent which combined with (\ref{eq64}) implies (\ref{eq63}). 

The point (iv) follows immediately from (ii)  and a standard application of Chebyshev's inequality.
\end{proof}

Recall the labeling of the balls induced by the split tree generating algorithm explained in Section \ref{sec1}. Let $v$ and $v^{\prime}$ be the vertices in $T_{n}^{{\rm sp}}$ where the balls labeled $j$ and $j^{\prime}$ are located, respectively. We call the vertex $v \wedge v^{\prime}$ at which the paths in $T_{n}^{{\rm sp}}$ from the vertices $v$ and $v^{\prime}$ to the root intersect the last common ancestor of the balls with labels $j$ and $j^{\prime}$.
For simplicity, we denote by $j \wedge j^{\prime}$ a last common ancestor of the balls $j$ and $j^{\prime}$ (notice that $j \wedge j^{\prime}$ is not necessary unique). Let $D_{n}(j \wedge j^{\prime})$ be the height of $j \wedge j^{\prime}$ when all $n$ balls have been inserted. 

\begin{lemma} \label{lemma2}
Assume that Condition \ref{Cond1} is fulfilled. For $n \in \mathbb{N}$ fixed, let $b_{1}$ and $b_{2}$ denote two independent uniformly distributed random ball labels in $T_{n}^{{\rm sp}}$. Let $h: \mathbb{N} \rightarrow \mathbb{R}_{+}$ be some function such that $\lim_{n \rightarrow \infty} h(n) = \infty$. We have that
\begin{eqnarray*}
\lim_{n \rightarrow \infty} \frac{D_{n}(b_{1} \wedge b_{2})}{h(n)} = 0, \hspace*{5mm} \text{in probability}.
\end{eqnarray*}
\end{lemma}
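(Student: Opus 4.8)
The plan is to show that the sequence $\bigl(D_{n}(b_{1}\wedge b_{2})\bigr)_{n\ge 1}$ is tight (bounded in probability, uniformly in $n$); since $h(n)\to\infty$, convergence in probability to $0$ then follows at once. Precisely, the goal is to prove that for every fixed integer $k\ge 1$,
\begin{equation*}
\limsup_{n\to\infty}\mathbb{P}\bigl(D_{n}(b_{1}\wedge b_{2})\ge k\bigr)\le q^{\,k},\qquad q:=b\,\mathbb{E}[V_{1}^{2}]\in(0,1),
\end{equation*}
the strict bound $q<1$ coming from Condition~\ref{Cond1}: it forces $V_{1}\in(0,1)$ almost surely, hence $V_{1}^{2}<V_{1}$ a.s.\ and $\mathbb{E}[V_{1}^{2}]<\mathbb{E}[V_{1}]=1/b$ (this is exactly the strict inequality noted just after \eqref{eq7}). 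Granting this, fix $\delta,\varepsilon>0$, pick $k$ with $q^{\,k}<\varepsilon$, and note that for all $n$ large enough that $\delta h(n)\ge k$ one has $\{D_{n}(b_{1}\wedge b_{2})>\delta h(n)\}\subseteq\{D_{n}(b_{1}\wedge b_{2})\ge k\}$; the probability of the right-hand event is at most $\varepsilon+o(1)$ by the display, so $\limsup_{n}\mathbb{P}\bigl(D_{n}(b_{1}\wedge b_{2})>\delta h(n)\bigr)\le\varepsilon$, and letting $\varepsilon\downarrow 0$ gives Lemma~\ref{lemma2}.

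To prove the displayed bound, the starting point is the elementary identity, valid after conditioning on $T_{n}^{{\rm sp}}$,
\begin{equation*}
\mathbb{P}\bigl(D_{n}(b_{1}\wedge b_{2})\ge k\mid T_{n}^{{\rm sp}}\bigr)=\sum_{v\,:\,d_{n}(v)=k}\Bigl(\tfrac{n_{v}}{n}\Bigr)^{2},
\end{equation*}
which holds because $b_{1}\wedge b_{2}$ has height at least $k$ precisely when both balls lie in the subtree rooted at one (necessarily unique) vertex $v$ at height $k$, while $b_{1},b_{2}$ are independent and uniform among the $n$ balls and the subtrees hanging from distinct height-$k$ vertices are disjoint. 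Taking expectations and summing over all $b^{k}$ vertices of $\mathbb{T}$ at height $k$ (those absent from $T_{n}^{{\rm sp}}$ contributing $n_{v}=0$), then invoking the second-moment estimate \eqref{eq7} with $j=k$, which gives $\mathbb{E}[n_{v}^{2}]\le n^{2}(\mathbb{E}[V_{1}^{2}])^{k}+O(kn)$ for each fixed $k$ and each such $v$, yields
\begin{equation*}
\mathbb{P}\bigl(D_{n}(b_{1}\wedge b_{2})\ge k\bigr)=\frac{1}{n^{2}}\,\mathbb{E}\Bigl[\sum_{v:\,d_{n}(v)=k}n_{v}^{2}\Bigr]\le q^{\,k}+O\!\left(\frac{k\,b^{k}}{n}\right),
\end{equation*}
where $q=b\,\mathbb{E}[V_{1}^{2}]$ and the error term tends to $0$ as $n\to\infty$ with $k$ fixed. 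This is exactly the claimed estimate.

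There is no genuine analytic difficulty here; the one point to handle with care is the order in which the two limits are taken. The second-moment estimate only controls $\mathbb{P}(D_{n}(b_{1}\wedge b_{2})\ge k)$ up to an error of order $k\,b^{k}/n$, which is \emph{not} uniform in $k$, so one cannot substitute $k=\delta h(n)$ directly. The remedy, carried out above, is to freeze $k$, let $n\to\infty$ to absorb the error, and only then let $k\to\infty$ to exploit $q<1$; the hypothesis $h(n)\to\infty$ is precisely what makes this two-step passage to the limit legitimate.
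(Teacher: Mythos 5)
Your proof is correct and rests on the same two ingredients as the paper's own argument: the decomposition of the event $\{D_{n}(b_{1}\wedge b_{2})\ge k\}$ over the disjoint subtrees rooted at the (at most $b^{k}$) vertices at height $k$, which gives $\mathbb{P}(D_{n}(b_{1}\wedge b_{2})\ge k)=n^{-2}\sum_{v}\mathbb{E}[n_{v}^{2}]$, and the second-moment bound (\ref{eq7}) together with $b\,\mathbb{E}[V_{1}^{2}]<1$ from Condition \ref{Cond1}. The only difference is organizational: the paper substitutes the growing height $\lceil \delta h(n)\rceil$ directly and invokes the refined asymptotic (\ref{eq107}), whereas you freeze $k$ and take the limits in two steps — a valid, and arguably cleaner, variant that sidesteps any question about uniformity of the error term when the height grows with $n$.
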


\begin{proof}
For $\delta > 0$, we notice that $D_{n}(b_{1} \wedge b_{2}) \geq \delta h(n)$ when both balls $b_{1}$ and $b_{2}$ lie in the same sub-tree and the height of the last common ancestor related to this sub-tree has to be greater than $\delta h(n)$. For $1 \leq i \leq b^{\lceil \delta h(n) \rceil}$, let $v_{i}$ be a vertex in $T_{n}^{{\rm sp}}$ at height $\lceil \delta h(n) \rceil$ and let $n_{i}$ be the number of balls stored at the sub-tree rooted at $v_{i}$; note that those balls have depth greater than $\delta h(n)$. Since $b_{1}$ and $b_{2}$ denote two independent uniformly distributed random ball in $T_{n}^{{\rm sp}}$, we have that
\begin{eqnarray} \label{eq8}
\mathbb{P}(D_{n}(b_{1} \wedge b_{2}) \geq \delta h(n)) \leq \mathbb{E}\left[\sum_{i=1}^{b^{\lceil \delta h(n) \rceil}} \left(\frac{n_{i}}{n} \right)^{2} \right] = n^{-2} \sum_{i=1}^{b^{\lceil \delta h(n) \rceil}} \mathbb{E}\left[n_{i}^{2} \right].
\end{eqnarray}

\noindent On the other hand, Condition \ref{Cond1} and the standard inequality \cite[equation (1.10)]{Holm2011} for subtrees sizes in split-trees (we refer to the estimation (\ref{eq7}) for a formal proof) imply that
\begin{eqnarray} \label{eq107}
\mathbb{E}\left[n_{i}^{2} \right] = n^{2} \mathbb{E}^{\lceil \delta h(n) \rceil} \left[
    V_{1}^{2}\right] + o(n^{2}\ln^{-k}n),
\end{eqnarray}

\noindent for an arbitrary $k \geq 0$ and where $\mathbb{E}[ V_{1}^{2}] < 1/b$. This identity combined with (\ref{eq8}) clearly implies our claim. 
\end{proof}

Let $v$ and $v^{\prime}$ be two vertices in the split tree $T_{n}^{{\rm sp}}$. We denote by $d_{n}(v \wedge v^{\prime})$ the height of the last common ancestor $v \wedge v^{\prime}$ of the vertices $v$ and $v^{\prime}$ in the tree $T_{n}^{{\rm sp}}$. 

\begin{lemma} \label{cor1}
Assume that Conditions \ref{Cond1} and \ref{Cond2} are fulfilled. For $n \in \mathbb{N}$ fixed, let $u_{1}$ and $u_{2}$ denote two independent uniformly distributed random vertices in $T_{n}^{{\rm sp}}$. Let $h: \mathbb{N} \rightarrow \mathbb{R}_{+}$ be some function with $\lim_{n \rightarrow \infty} h(n) = \infty$. We have that
\begin{eqnarray*}
\lim_{n \rightarrow \infty} \frac{d_{n}(u_{1} \wedge u_{2})}{h(n)} = 0, \hspace*{5mm} \text{in probability}.
\end{eqnarray*}
\end{lemma}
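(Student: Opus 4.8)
The plan is to reproduce the computation behind Lemma \ref{lemma2}, adapting it from balls to vertices. As a preliminary reduction, note that $d_n(u_1\wedge u_2)\le d_n(u_1)\wedge d_n(u_2)\le H_n$, and that by (\ref{eq42}) there is a constant $C^\ast>0$ with $\mathbb{P}(H_n\ge C^\ast\ln n)\to 0$. Hence, after fixing $\varepsilon>0$ and putting $t_n\coloneqq\lceil\min(\varepsilon h(n),C^\ast\ln n)\rceil$ — which satisfies $t_n\le\lceil\varepsilon h(n)\rceil$, $t_n\to\infty$ and $t_n=O(\ln n)$ — it is enough to show that $\mathbb{P}(d_n(u_1\wedge u_2)\ge t_n)\to 0$ for every such sequence.

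First I would condition on $T_n^{{\rm sp}}$. Let $v_1,\dots,v_{b^{t_n}}$ be the vertices of $\mathbb{T}$ at height $t_n$ and let $N_i$ be the number of vertices of $T_n^{{\rm sp}}$ in the sub-tree rooted at $v_i$ (so $N_i=0$ unless $v_i\in T_n^{{\rm sp}}$, and $\sum_i N_i\le N$). Since $d_n(u_1\wedge u_2)\ge t_n$ means precisely that the two uniform vertices fall in a common such sub-tree, and since every vertex of $T_n^{{\rm sp}}$ carries at most $s$ balls so that $N\ge n/s$ deterministically, one gets
\[
\mathbb{P}\bigl(d_n(u_1\wedge u_2)\ge t_n\,\big|\,T_n^{{\rm sp}}\bigr)=\sum_{i=1}^{b^{t_n}}\Bigl(\tfrac{N_i}{N}\Bigr)^{2}\le\frac{s^{2}}{n^{2}}\sum_{i=1}^{b^{t_n}}N_i^{2},
\]
so it suffices that $n^{-2}\sum_i\mathbb{E}[N_i^2]\to 0$. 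The extra ingredient relative to Lemma \ref{lemma2} is the uniform comparison $\mathbb{E}[N_i^2]\le C\,\mathbb{E}[n_i^2]$: conditionally on $n_i$, the sub-tree rooted at $v_i$ is again a split tree on $n_i$ balls, so $N_i$ has the law of the vertex-count $N^{(m)}$ of a split tree with $m=n_i$ balls, and from Condition \ref{Cond2} together with the variance bound $\operatorname{Var}(N^{(m)})=o(m^2)$ recalled in Remark \ref{remark1} (plus the trivial $N^{(m)}\le m(1+H_m)$ to dispatch small $m$) one has $\mathbb{E}[(N^{(m)})^2]=\operatorname{Var}(N^{(m)})+\mathbb{E}[N^{(m)}]^2\le Cm^2$ for all $m\ge 1$.

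Plugging this in and using the sub-tree size estimate (\ref{eq7}) — summed over the $b^{t_n}$ vertices at height $t_n$, being careful that the additive $s_1$-correction in (\ref{eq7}) need only be charged to the at most $n$ non-empty sub-trees — one obtains $\sum_i\mathbb{E}[n_i^2]\le 2n+2n^2(b\,\mathbb{E}[V_1^2])^{t_n}+2s_1^2 t_n^2 n$, hence
\[
\frac{1}{n^{2}}\sum_{i=1}^{b^{t_n}}\mathbb{E}[N_i^2]\ \le\ Cs^{2}\Bigl(\frac{2}{n}+2\bigl(b\,\mathbb{E}[V_1^2]\bigr)^{t_n}+\frac{2s_1^2 t_n^2}{n}\Bigr)\ \xrightarrow[\;n\to\infty\;]{}\ 0,
\]
because $b\,\mathbb{E}[V_1^2]<1$ (here Condition \ref{Cond1} enters, since it forces $\mathbb{E}[V_1^2]<\mathbb{E}[V_1]=1/b$), $t_n\to\infty$, and $t_n=O(\ln n)=o(\sqrt n)$. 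The only genuinely new difficulty is the uniform second-moment bound $\mathbb{E}[(N^{(m)})^2]=O(m^2)$: the deterministic estimate $N^{(m)}\le m(1+H_m)$ alone costs a spurious factor $\ln^2 m$ that cannot be absorbed when $t_n$ grows slowly, so one must appeal to the sharp variance estimate for $N$ available under Condition \ref{Cond2}.
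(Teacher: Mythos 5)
Your argument is correct and follows essentially the same route as the paper: bound $\mathbb{P}(d_{n}(u_{1}\wedge u_{2})\geq t)$ by $\mathbb{E}\bigl[\sum_{i}(N_{i}/N)^{2}\bigr]$ over the height-$t$ sub-trees, reduce $\mathbb{E}[N_{i}^{2}]$ to $O(\mathbb{E}[n_{i}^{2}])$ via Condition \ref{Cond2} and Remark \ref{remark1} (the sub-tree rooted at $v_{i}$ being again a split tree on $n_{i}$ balls), and conclude from the geometric decay of $(b\,\mathbb{E}[V_{1}^{2}])^{t}$ guaranteed by Condition \ref{Cond1}. The only differences are cosmetic: you handle the random denominator with the deterministic bound $N\geq n/s$ where the paper instead splits off a $Var(N)/\mathbb{E}^{2}[N]=o(1)$ term, and you truncate the height at $O(\ln n)$ and charge the $s_{1}$-corrections only to the at most $n$ non-empty sub-trees, which are harmless refinements of the same computation.
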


\begin{proof}
We follow a similar argument as in the proof Lemma \ref{lemma2}.  For $\delta > 0$, note that $d_{n}(u_{1} \wedge u_{2}) \geq \delta h(n)$ when both vertices lie in the same sub-tree and the height of the last common ancestor related to this sub-tree has to be greater than $\delta h(n)$. For $1 \leq i \leq b^{\lceil \delta h(n) \rceil}$, let $v_{i}$ be a vertex in $T_{n}^{{\rm sp}}$ at height $\lceil \delta h(n) \rceil$ and let $N_{i}$ be the number of vertices of the sub-tree rooted at $v_{i}$. Since $u_{1}$ and $u_{2}$ are two independent uniformly distributed random vertices in $T_{n}^{{\rm sp}}$, we have that
\begin{eqnarray} \label{eq105}
\mathbb{P}(d_{n}(u_{1} \wedge u_{2}) \geq \delta h(n)) & \leq & \mathbb{E}\left[\sum_{i=1}^{b^{\lceil \delta h(n) \rceil}} \left(\frac{N_{i}}{N} \right)^{2} \right] \nonumber \\
& = & \mathbb{E}\left[ \frac{N^{2}- \mathbb{E}^{2}[N]}{N^{2} \mathbb{E}^{2}[N]}  \sum_{i=1}^{b^{\lceil \delta h(n) \rceil}} N_{i}^{2} \right] + \mathbb{E}\left[\sum_{i=1}^{b^{\lceil \delta h(n) \rceil}} \left(\frac{N_{i}}{\mathbb{E}[N]} \right)^{2} \right].
\end{eqnarray}

We analyze the first term at the right-hand side of (\ref{eq105}). Note that $\sum_{i=1}^{b^{\lceil \delta h(n) \rceil}} N_{i}^{2} \leq N^{2}$. Then Condition \ref{Cond2} and Remark \ref{remark1} imply that
\begin{eqnarray} \label{eq106}
\mathbb{E}\left[ \frac{N^{2}- \mathbb{E}^{2}[N]}{N^{2} \mathbb{E}^{2}[N]}  \sum_{i=1}^{b^{\lceil \delta h(n) \rceil}} N_{i}^{2} \right] \leq \frac{Var(N)}{\mathbb{E}^{2}[N]} = o(1). 
\end{eqnarray}

\noindent We now focus in the second term at the right-hand side of (\ref{eq105}). Note that Condition \ref{Cond2} and Remark \ref{remark1} imply that $\mathbb{E}[N_{i}^{2}] = \mathbb{E}\left[ Var(N_{i} | n_{i}) + \mathbb{E}^{2}[N_{i}|n_{i}] \right] = O(\mathbb{E}[n_{i}^{2}])$, where we have used the well-known formula $Var(N_{i}) = \mathbb{E}[Var(N_{i}| n_{i})] + Var(\mathbb{E}[N_{i}|n_{i}])$. Hence the previous estimate, the inequality (\ref{eq107}) and Condition \ref{Cond2} allow us to conclude that
\begin{eqnarray} \label{eq108}
\mathbb{E}\left[\sum_{i=1}^{b^{\lceil \delta h(n) \rceil}} \left(\frac{N_{i}}{\mathbb{E}[N]} \right)^{2} \right] = o(1).
\end{eqnarray}

\noindent Finally, our claim follows by applying (\ref{eq106}) and (\ref{eq108}) into (\ref{eq105}).  
\end{proof}

We complete this section by stating a corollary of the previous lemmas. Let $u_{1}$ and $u_{2}$ be two independent uniformly chosen vertices in $T_{n}^{{\rm sp}}$. We
write $d_{n}(u_{1},u_{2})$ for the number of edges of $T_{n}^{{\rm sp}}$ which are needed to connect
the root, $u_{1}$ and $u_{2}$. Similarly, let $b_{1}$ and $b_{2}$ be two independent uniformly chosen balls in $T_{n}^{{\rm sp}}$. We write $D_{n}(b_{1}, b_{2})$ for the number of edges of $T_{n}^{{\rm sp}}$ which are needed to connect the root, and vertices where the balls $b_{1}$ and $b_{2}$ are stored.

\begin{corollary} \label{cor2}
Assume that Condition \ref{Cond1} is fulfilled. We have that
\begin{eqnarray*}
\lim_{n \rightarrow \infty} \frac{D_{n}(b_{1}, b_{2})}{\ln n} = \frac{2}{ \mu} \hspace*{5mm} \text{in probability.}
\end{eqnarray*}

\noindent If we further assume that Condition \ref{Cond2} is also satisfied. We have that
\begin{eqnarray*}
\lim_{n \rightarrow \infty} \frac{d_{n}(u_{1},u_{2})}{\ln n} = \frac{2}{ \mu} \hspace*{5mm} \text{in probability.}
\end{eqnarray*}
\end{corollary}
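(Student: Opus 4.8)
The plan is to reduce the statement to results already established in this appendix by means of the elementary path-counting identity for the subtree spanned by the root and two marked points. If $b_{1}$ and $b_{2}$ are the two uniformly chosen balls, then the two root-to-ball paths in $T_{n}^{{\rm sp}}$ share precisely the $D_{n}(b_{1}\wedge b_{2})$ edges lying strictly above the last common ancestor $b_{1}\wedge b_{2}$, so
\[
D_{n}(b_{1},b_{2}) = D_{n}(b_{1}) + D_{n}(b_{2}) - D_{n}(b_{1}\wedge b_{2}).
\]
Dividing by $\ln n$, it therefore suffices to understand each of the three terms on the right-hand side separately.

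First I would note that $b_{1}$ is marginally a uniformly chosen ball, so Lemma \ref{lemma5}(iv) gives $D_{n}(b_{1})/\ln n \to 1/\mu$ in probability, and by exchangeability the same holds for $D_{n}(b_{2})$. Next, applying Lemma \ref{lemma2} with the admissible choice $h(n)=\ln n$ yields $D_{n}(b_{1}\wedge b_{2})/\ln n \to 0$ in probability. Since convergence in probability to deterministic limits is preserved under finite sums, irrespective of the joint law of the summands, the displayed identity immediately gives $D_{n}(b_{1},b_{2})/\ln n \to 1/\mu + 1/\mu - 0 = 2/\mu$ in probability, which is the first assertion.

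For the second assertion the argument is identical word for word with vertices replacing balls: one writes $d_{n}(u_{1},u_{2}) = d_{n}(u_{1}) + d_{n}(u_{2}) - d_{n}(u_{1}\wedge u_{2})$, invokes Lemma \ref{lemma1}(iv) (valid under Conditions \ref{Cond1} and \ref{Cond2}) for the first two terms and Lemma \ref{cor1} with $h(n)=\ln n$ for the last term, and then adds the three limits.

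There is no serious obstacle here; the only points demanding a moment's attention are that the marginal of each $b_{i}$ (resp.\ $u_{i}$) is uniform, so that the single-point results genuinely apply, and that no joint control beyond the last-common-ancestor estimate is needed because limits in probability add. All the real work has already been done in Lemmas \ref{lemma5}, \ref{lemma1}, \ref{lemma2} and \ref{cor1}; this corollary merely packages them together.
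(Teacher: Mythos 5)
Your proof is correct and follows essentially the same route as the paper: the decomposition $D_{n}(b_{1},b_{2}) = D_{n}(b_{1}) + D_{n}(b_{2}) - D_{n}(b_{1}\wedge b_{2})$ combined with Lemma \ref{lemma5} and Lemma \ref{lemma2} (and their vertex analogues Lemma \ref{lemma1} and Lemma \ref{cor1}) is exactly the argument given there.
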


\begin{proof}
We note that $D_{n}(b_{1}, b_{2}) = D_{n}(b_{1}) + D_{n}(b_{2}) - D_{n}(b_{1} \wedge b_{2})$, where $D_{n}(b_{1})$ has the same distribution as $D_{n}(b_{2})$. Therefore, the first result is a direct consequence of Lemma \ref{lemma5} and Lemma \ref{lemma2}. The proof of the second claim follows from a similar argument by using Lemma \ref{lemma1} and Lemma \ref{cor1}. 
\end{proof}

\section{Proof of Theorem \ref{Theo3}} \label{appendix1}

In this section, we prove Theorem \ref{Theo3} which is an important ingredient in the proof of Theorem \ref{Theo2}. For $1 \leq i \leq m_{n}$, we denote by $\mathcal{F}_{i}$ the $\sigma$-field generated by $(n_{v}, d_{n}(v) \leq i)$. Recall from the beginning of Section \ref{sec2} that for a vertex $v \in T_{n}^{{\rm sp}}$ that is at height $d_{n}(v) = i$, we write  $(W_{v,k}, k=1, \dots, i)$ for a sequence of i.i.d.\ random variables on $[0,1]$ given by the split vectors associated with the vertices on the unique path from $v$ to the root. We denote by $\mathcal{G}_{i}$ the $\sigma$-field generated by $((W_{v,k}, k=1, \dots, i): d_{n}(v) = i)$. Recall the notation $\varepsilon_{v}$ in (\ref{eqnew1}) and write 
\begin{eqnarray} \label{eq54}
\hat{n}_{v} \coloneqq n \prod_{k=1}^{i} W_{v,k}, \hspace*{5mm}
\text{and} \hspace*{5mm} \hat{\xi}_{v}:=e^{-\frac{c}{\mu} }\frac{ \ln n}{n} \hat{n}_{v} \varepsilon_{v}.
\end{eqnarray} 

\noindent Note that $\mathcal{G}_{i}$ is equivalent to the $\sigma$-field generated by $(\hat{n}_{v},d_{n}(v) \leq i)$. 

We present now some crucial lemmas that are used in the proof of Theorem \ref{Theo3}. Recall the notation $m_{n} = \lfloor \beta \log_{b} \ln n  \rfloor$ for $\beta > 0$. Furthermore, through this section we assume that $\beta$ is large enough. For the sake of simplicity, we introduce the following notation. For any constants $\theta, x >0$,
\begin{eqnarray*}
 \alpha_{n}^{\prime}\coloneqq\frac{\ln n}{n} \sum_{d_{n}(v) = m_{n}} \hat{n}_{v} e^{- \frac{c}{\mu} \frac{\ln \hat{n}_{v}}{\ln n}} - c e^{- \frac{c}{\mu}} \sum_{d_{n}(v) = m_{n}} \frac{\hat{n}_{v}}{n} \varpi(\ln \hat{n}_{v}), \hspace*{5mm}  \Delta_{n,1}^{\prime}\coloneqq\sum_{1 \leq  d_{n}(v) \leq m_{n}} \mathbb{P}(\hat{\xi}_{v} \geq x| \mathcal{G}_{m_{n}}), 
\end{eqnarray*}

\begin{eqnarray*}
 \Delta_{n,2}^{\prime}\coloneqq\sum_{1 \leq  d_{n}(v) \leq m_{n}} \mathbb{E} \left[ \hat{\xi}_{v} \mathds{1}_{\left \{  \hat{\xi}_{v} \leq \theta \right\} }| \mathcal{G}_{m_{n}} \right] \hspace*{5mm} \text{and} \hspace*{5mm}  \Delta_{n,3}^{\prime}\coloneqq\sum_{1 \leq  d_{n}(v) \leq m_{n}} Var \left( \hat{\xi}_{v} \mathds{1}_{\left \{  \hat{\xi}_{v} \leq \theta \right\} }| \mathcal{G}_{m_{n}} \right).
\end{eqnarray*}

\noindent Recall also the notation $\Delta_{n,i}$, for $i=1,2,3$, in the statement of Theorem \ref{Theo3}. 

\begin{lemma} \label{lemma11}
Suppose that Condition \ref{Cond1} holds. Furthermore, if 
$\ln V_{1}$ is lattice with span $d$ defined in (\ref{eq51}), we also assume that Condition \ref{NewCond1} holds for some $\varrho \in [0,1)$. We have that
\begin{itemize}
\item[(i)] $ \displaystyle \Delta_{n,1} = \Delta_{n,1}^{\prime} + o_{{\rm p}}(1)$.

\item[(ii)] $ \displaystyle \sum_{1 \leq  d_{n}(v) \leq m_{n}} \mathbb{E} \left[ \xi_{v} \mathds{1}_{\left \{ \xi_{v} \leq \theta \right\} }| \mathcal{F}_{m_{n}} \right] = \Delta_{n,2}^{\prime} + o_{{\rm p}}(1)$.

\item[(iii)] $ \displaystyle \frac{\ln n}{n} \sum_{d_{n}(v) = m_{n}} n_{v} e^{- \frac{c}{\mu} \frac{\ln n_{v}}{\ln n}} - ce^{-\frac{c}{\mu} }  \sum_{d_{n}(v)=m_{n}} \frac{n_{v} \varpi(\ln n_{v})}{n} = \alpha_{n}^{\prime} + o_{{\rm p}}\left(1\right)$.

\item[(iv)] $ \displaystyle \Delta_{n,3} = \Delta_{n,3}^{\prime} + o_{{\rm p}}(1)$.
\end{itemize}

\end{lemma}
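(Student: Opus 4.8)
The plan is to prove the four statements together, reducing each of them to the single moment estimate below together with the renewal asymptotics (\ref{eq11}). The starting point is that every $\varepsilon_{v}$ is an independent $\mathrm{Bernoulli}(1-p_{n})$ variable, independent of $\mathcal{F}_{m_{n}}$ and of $\mathcal{G}_{m_{n}}$; hence for $x,\theta>0$,
\[
\mathbb{P}(\xi_{v}>x\mid\mathcal{F}_{m_{n}})=(1-p_{n})\mathds{1}_{\{n_{v}>a_{n}\}},\qquad \mathbb{E}\big[\xi_{v}\mathds{1}_{\{\xi_{v}\le\theta\}}\mid\mathcal{F}_{m_{n}}\big]=(1-p_{n})e^{-\frac{c}{\mu}}\tfrac{\ln n}{n}\,n_{v}\mathds{1}_{\{n_{v}\le b_{n}\}},
\]
with $a_{n}:=xe^{c/\mu}n/\ln n$, $b_{n}:=\theta e^{c/\mu}n/\ln n$, and $Var(\xi_{v}\mathds{1}_{\{\xi_{v}\le\theta\}}\mid\mathcal{F}_{m_{n}})=(1-p_{n})p_{n}\big(e^{-\frac{c}{\mu}}\tfrac{\ln n}{n}n_{v}\mathds{1}_{\{n_{v}\le b_{n}\}}\big)^{2}$; the same formulas hold with $\hat{\xi}_{v},\hat{n}_{v},\mathcal{G}_{m_{n}}$ in place of $\xi_{v},n_{v},\mathcal{F}_{m_{n}}$. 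Thus each quantity in (i)--(iv) is a sum over the $v$ with $1\le d_{n}(v)\le m_{n}$ (over $d_{n}(v)=m_{n}$ for the sums in (iii)) of a bounded, piecewise-smooth function of $n_{v}$, and the primed quantity is the same sum with $\hat{n}_{v}$ instead of $n_{v}$. The one nontrivial input I shall use is that, conditionally on $\mathcal{G}_{m_{n}}$, the sandwich (\ref{eq19}) (equivalently, the computation behind (\ref{eq7})) gives $\mathbb{E}[(n_{v}-\hat{n}_{v})^{2}\mid\mathcal{G}_{m_{n}}]=O(d_{n}(v)\hat{n}_{v}+d_{n}(v)^{2})=O(m_{n}\hat{n}_{v}+m_{n}^{2})$. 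Together with the exact identities $\sum_{d_{n}(v)=j}\hat{n}_{v}=n$, $\sum_{d_{n}(v)=j}\mathbb{E}[\hat{n}_{v}^{2}]=n^{2}(b\,\mathbb{E}[V_{1}^{2}])^{j}$ (summable in $j$ since $b\,\mathbb{E}[V_{1}^{2}]<1$) and with $\#\{v:d_{n}(v)\le m_{n}\}=O(b^{m_{n}})=O((\ln n)^{\beta})$, Cauchy--Schwarz turns this into workable bounds such as $\sum_{d_{n}(v)=j}\sqrt{\hat{n}_{v}}\le\sqrt{b^{j}n}$.

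Part (iii) is the ``smooth replacement'' case, with no indicator. Writing $g_{n}(x)=x\,e^{-\frac{c}{\mu}\frac{\ln x}{\ln n}}=x^{\,1-c/(\mu\ln n)}$ one has $0\le g_{n}'\le1$ on $[1,\infty)$, so $|g_{n}(n_{v})-g_{n}(\hat{n}_{v})|\le|n_{v}-\hat{n}_{v}|+O(1)$; taking expectations, using $\mathbb{E}|n_{v}-\hat{n}_{v}|\le(\mathbb{E}[(n_{v}-\hat{n}_{v})^{2}])^{1/2}$ and the estimates above, $\tfrac{\ln n}{n}\sum_{d_{n}(v)=m_{n}}\mathbb{E}|g_{n}(n_{v})-g_{n}(\hat{n}_{v})|=O\big((\ln n)^{1+\beta/2}m_{n}^{1/2}n^{-1/2}+(\ln n)^{\beta}m_{n}n^{-1}\big)\to0$, which by Markov gives the replacement for the first sum. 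For the $\varpi$-term, bound $\big|\tfrac{n_{v}}{n}\varpi(\ln n_{v})-\tfrac{\hat{n}_{v}}{n}\varpi(\ln\hat{n}_{v})\big|\le\tfrac{\|\varpi\|_{\infty}}{n}|n_{v}-\hat{n}_{v}|+\tfrac{\hat{n}_{v}}{n}\big|\varpi(\ln n_{v})-\varpi(\ln\hat{n}_{v})\big|$: the first piece is as above, and for the second one truncates at $\{\hat{n}_{v}\le n(\ln n)^{-\beta-2}\}$, whose contribution to $\sum_{d_{n}(v)=m_{n}}\hat{n}_{v}/n$ is $o(1)$ exactly as in (\ref{eq30}), while on the complement $n_{v}/\hat{n}_{v}=1+o_{\mathrm{p}}(1)$, so $|\ln n_{v}-\ln\hat{n}_{v}|\to0$ and the uniform continuity of the continuous periodic function $\varpi$ closes the estimate.

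Parts (i), (ii) and (iv) are the ``indicator'' cases. In each, the difference between the unprimed and the primed quantity equals $(1-p_{n})$ times a sum $\sum_{v}\big(\phi_{n}(n_{v})\mathds{1}_{\{n_{v}\le t_{n}\}}-\phi_{n}(\hat{n}_{v})\mathds{1}_{\{\hat{n}_{v}\le t_{n}\}}\big)$, with $t_{n}=a_{n}$ and $\phi_{n}\equiv1$ for (i), and $t_{n}=b_{n}$, $\phi_{n}(x)=e^{-\frac{c}{\mu}}\tfrac{\ln n}{n}x$ (resp.\ $\phi_{n}(x)=(e^{-\frac{c}{\mu}}\tfrac{\ln n}{n}x)^{2}$) for (ii) (resp.\ (iv)); here $\phi_{n}$ is bounded and Lipschitz with constant $O(t_{n}^{-1})$ on $[0,t_{n}]$, and $t_{n}=\Theta(n/\ln n)$. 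Split $\phi_{n}(n_{v})\mathds{1}_{\{n_{v}\le t_{n}\}}-\phi_{n}(\hat{n}_{v})\mathds{1}_{\{\hat{n}_{v}\le t_{n}\}}=[\phi_{n}(n_{v})-\phi_{n}(\hat{n}_{v})]\mathds{1}_{\{n_{v}\le t_{n}\}}+\phi_{n}(\hat{n}_{v})[\mathds{1}_{\{n_{v}\le t_{n}\}}-\mathds{1}_{\{\hat{n}_{v}\le t_{n}\}}]$: the first bracket is an $L^{1}$-negligible Lipschitz-replacement term (handled as in (iii), the $\{\hat{n}_{v}>t_{n}\}$ part being absorbed into the rare-event bound below). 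For the second bracket, fix $\varepsilon>0$ and use $\mathds{1}_{\{\hat{n}_{v}>t_{n}(1+\varepsilon)\}}-\mathds{1}_{\{|n_{v}-\hat{n}_{v}|>\varepsilon t_{n}\}}\le\mathds{1}_{\{n_{v}>t_{n}\}}\le\mathds{1}_{\{\hat{n}_{v}>t_{n}(1-\varepsilon)\}}+\mathds{1}_{\{|n_{v}-\hat{n}_{v}|>\varepsilon t_{n}\}}$ (and symmetrically). The ``rare'' term $(1-p_{n})\sum_{v}\phi_{n}(\hat{n}_{v})\mathds{1}_{\{|n_{v}-\hat{n}_{v}|>\varepsilon t_{n}\}}$ has $L^{1}$-norm $\le\varepsilon^{-2}t_{n}^{-2}(1-p_{n})\sum_{v}\mathbb{E}[\phi_{n}(\hat{n}_{v})(m_{n}\hat{n}_{v}+m_{n}^{2})]=O(\varepsilon^{-2}(\ln n)^{O(1)}n^{-1})\to0$ by Chebyshev and the summability of $\sum_{j}\sum_{d_{n}(v)=j}\mathbb{E}[\hat{n}_{v}^{2}]$. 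The ``boundary'' term is $\le C(1-p_{n})\sum_{v}\mathds{1}_{\{t_{n}(1-\varepsilon)<\hat{n}_{v}\le t_{n}(1+\varepsilon)\}}$ (since $\phi_{n}\le C$ there), whose expectation equals $C(1-p_{n})\sum_{j\le m_{n}}b^{j}\,\mathbb{P}\big(S_{j}\in[\ln\tfrac{n}{t_{n}}-\ln(1+\varepsilon),\,\ln\tfrac{n}{t_{n}}+\ln\tfrac{1}{1-\varepsilon}]\big)$ with $\ln\tfrac{n}{t_{n}}=\ln\ln n+O(1)$; by (\ref{eq11}), read as the increment of the exponential renewal function at two points at height $\ln\ln n+O(1)$, and after discarding $\sum_{j>m_{n}}b^{j}\mathbb{P}(S_{j}\le\ln\ln n+O(1))=o(\ln n)$ (a Cram\'er-type tail estimate valid once $\beta$ is large, as in the proof of Lemma \ref{lemma7}), this is $\mu^{-1}e^{\ln(n/t_{n})}\big(\tfrac{1}{1-\varepsilon}-\tfrac{1}{1+\varepsilon}\big)+o(\ln n)=O(\varepsilon\ln n)+o(\ln n)$. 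Multiplying by $(1-p_{n})=c/\ln n$ gives $O(\varepsilon)+o(1)$, so $\limsup_{n}\mathbb{E}[\text{boundary}]=O(\varepsilon)$; letting $\varepsilon\downarrow0$ yields (i), (ii) and (iv).

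I expect the boundary estimate in the last paragraph to be the crux of the argument: it is the only step where one needs more than crude moments of $n_{v}$, namely that the exponential renewal function $R(y)=\sum_{k}b^{k}\mathbb{P}(S_{k}\le y)$ of (\ref{eq11}) grows like $\mu^{-1}e^{y}$ \emph{uniformly} near $y=\ln\ln n$, so that a window of multiplicative width $1+O(\varepsilon)$ captures only an $O(\varepsilon)$-fraction of the $\Theta(\ln n)$ relevant vertices once the $(1-p_{n})\sim c/\ln n$ scaling is taken into account. The truncation of the deep levels $j>m_{n}$ is exactly what pins down the standing hypothesis that $\beta$ be large enough; with that in force, everything else is a routine chain of Cauchy--Schwarz, Chebyshev and Markov estimates built on $\mathbb{E}[(n_{v}-\hat{n}_{v})^{2}\mid\mathcal{G}_{m_{n}}]=O(m_{n}\hat{n}_{v}+m_{n}^{2})$.
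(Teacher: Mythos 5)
Your proposal is correct and follows essentially the same route as the paper: the same ``Lipschitz replacement $+$ indicator mismatch'' decomposition, concentration of $n_{v}$ about $\hat{n}_{v}$ via the binomial sandwich (\ref{eq19}) for the smooth and rare-event parts, and the renewal asymptotics (\ref{eq11}) evaluated at two nearby heights to show the boundary window $\{(1-\varepsilon)t_{n}<\hat{n}_{v}\le(1+\varepsilon)t_{n}\}$ contributes only $O(\varepsilon)$ after the $(1-p_{n})\sim c/\ln n$ scaling, with $\varepsilon\downarrow 0$ playing the role of the paper's $\delta_{1}\uparrow 1$, $\delta_{2}\downarrow 1$. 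The only cosmetic difference is that you run everything off the conditional second-moment bound $\mathbb{E}[(n_{v}-\hat{n}_{v})^{2}\mid\mathcal{G}_{m_{n}}]=O(\hat{n}_{v}+m_{n}^{2})$, where the paper alternates between a direct Chebyshev bound on the binomial and the ready-made concentration estimate of Proposition \ref{proposition1}.
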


\begin{lemma} \label{lemma12}
Suppose that Condition \ref{Cond1} holds. Furthermore, if 
$\ln V_{1}$ is lattice with span $d$ defined in (\ref{eq51}), we also assume that Condition \ref{NewCond1} holds for some $\varrho \in [0,1)$. We have that
\begin{itemize}
\item[(i)] $\displaystyle \mathbb{E}[\Delta_{n,1}^{\prime}] = \nu([x,\infty)) + o(1)$ for every $x > 0$.

\item[(ii)] $\displaystyle \mathbb{E}[\Delta_{n,2}^{\prime}] =  \left( \mu m_{n}+  \frac{2c - \sigma^{2}+\mu^{2}}{2\mu} + \ln \theta - \mu \phi \left(\ln \left( \theta^{-1} e^{-\frac{c}{\mu}} \ln n \right) \right) + \psi_{\theta}(d) - \ln \ln n \right) \frac{c}{\mu}e^{-\frac{c}{\mu}}  + o(1)$.

\item[(iii)] $ \displaystyle \mathbb{E}[ \alpha_{n}^{\prime}] = e^{-\frac{c}{\mu}} \ln n + c e^{-\frac{c}{\mu}} m_{n} - ce^{-\frac{c}{\mu}} \varpi(\ln n) + o(1)$.

\item[(iv)] $\displaystyle \mathbb{E}[ \Delta_{n,3}^{\prime}] = \theta  \left(1 + \psi_{\theta}(d) \right) \frac{c}{\mu}e^{-\frac{c}{\mu}}  + o(1)$.
\end{itemize}
\end{lemma}

For any constants $\theta, x >0$ and $\beta$ large enough, we define $m_{n}^{\prime} \coloneqq \lfloor \frac{1}{2} \log_{b} \ln n \rfloor$ and  we write
\begin{eqnarray*}
\Delta_{n,1}^{\prime \prime} \coloneqq  \sum_{m_{n}^{\prime} \leq  d_{n}(v) \leq m_{n}} \mathbb{P}(\hat{\xi}_{v} \geq x| \mathcal{G}_{m_{n}}), \hspace*{5mm}  \Delta_{n,2}^{\prime \prime} \coloneqq  \sum_{m_{n}^{\prime}  \leq  d_{n}(v) \leq m_{n}} \mathbb{E} \left[ \hat{\xi}_{v} \mathds{1}_{\left \{  \hat{\xi}_{v} \leq \theta \right\} }| \mathcal{G}_{m_{n}} \right]  - \alpha_{n}^{\prime}
\end{eqnarray*}

\noindent and
\begin{eqnarray*}
 \Delta_{n,3}^{\prime \prime} \coloneqq  \sum_{m_{n}^{\prime} \leq  d_{n}(v) \leq m_{n}} Var \left( \hat{\xi}_{v} \mathds{1}_{\left \{  \hat{\xi}_{v} \leq \theta \right\} }| \mathcal{G}_{m_{n}} \right).
\end{eqnarray*}

\begin{lemma} \label{lemma13}
Suppose that Condition \ref{Cond1} holds. Furthermore, if 
$\ln V_{1}$ is lattice with span $d$ defined in (\ref{eq51}), we also assume that Condition \ref{NewCond1} holds for some $\varrho \in [0,1)$. We have that $Var \left( \mathbb{E} \left[ \Delta_{n,i}^{\prime \prime}  \Big| \mathcal{G}_{m_{n}^{\prime}} \right] \right) = o(1)$,  for $i =1,2,3$. 
\end{lemma}

\begin{lemma} \label{lemma14}
Suppose that Condition \ref{Cond1} holds. Furthermore, if 
$\ln V_{1}$ is lattice with span $d$ defined in (\ref{eq51}), we also assume that Condition \ref{NewCond1} holds for some $\varrho \in [0,1)$. We have that $E \left[ Var \left( \Delta_{n,i}^{\prime \prime}  \Big| \mathcal{G}_{m_{n}^{\prime}} \right) \right]  =o(1)$, for $i =1,2,3$. 
\end{lemma}

\begin{proof}[Proof of Theorem \ref{Theo3}]
For $v \in T_{n}^{{\rm sp}}$ such that $1 \leq  d_{n}(v) \leq m_{n}$, we observe that
\begin{eqnarray} \label{eq44}
\mathbb{P}\left(\xi_{v} \geq x| \mathcal{F}_{m_{n}} \right) = \mathbb{P}\left(\varepsilon_{v} \geq x e^{\frac{c}{\mu} } \frac{n}{\ln n}\frac{1}{n_{v}}   \Big| \mathcal{F}_{m_{n}} \right) = \left(1-p_{n} \right) \mathds{1}_{\left\{x e^{\frac{c}{\mu}}  \frac{n}{\ln n}\frac{1}{n_{v}} \leq 1 \right \}} \leq \frac{c}{\ln n}, 
\end{eqnarray}

\noindent for $x > 0$. Thus, 
\begin{eqnarray*}
\lim_{n\rightarrow \infty} \sup_{1 \leq  d_{n}(v) \leq m_{n}}\mathbb{P}\left(\xi_{v} \geq x| \mathcal{F}_{m_{n}} \right) = 0, \hspace*{5mm} \text{almost surely},
\end{eqnarray*}

\noindent for every $x > 0$, which proves (i). 

We deduce from Lemma \ref{lemma11} that $\Delta_{n,1} = \Delta_{n,1}^{\prime} + o_{{\rm p}}(1)$, 
\begin{align*}
\Delta_{n,2} & = \Delta_{n,2}^{\prime}- \alpha_{n}^{\prime} + e^{-\frac{c}{\mu}} \ln n + \frac{c}{\mu}e^{-\frac{c}{\mu}} \ln \ln n - c e^{-\frac{c}{\mu}} \varpi(\ln n) \\
& \hspace*{10mm} + \left( \frac{c \mu^{2} - c \sigma^{2}}{2 \mu^{2}} +  \mu \phi \left( \ln \left( \theta^{-1}e^{-\frac{c}{\mu}} \ln n \right) \right) \right) \frac{c}{\mu} e^{-\frac{c}{\mu}} + o_{{\rm p}}(1)
\end{align*}

\noindent and $\Delta_{n,3} = \Delta_{n,3}^{\prime} + o_{{\rm p}}(1)$. Furthermore, Lemma \ref{lemma12} shows that the expected value of the previous quantities converge to the right-hand sides of Theorem \ref{Theo3} (ii), (iii) and (iv). We complete the proof of Theorem \ref{Theo3} by showing that
\begin{eqnarray} \label{eq60}
 Var(\Delta_{n,1}^{\prime}) = o(1) \hspace*{3mm} \text{for every} \hspace*{3mm} x >0, \hspace*{5mm}  Var(\Delta_{n,2}^{\prime} - \alpha^{\prime}_{n}) = o(1) \hspace*{5mm} \text{and} \hspace*{5mm}  Var(\Delta_{n,3}^{\prime})=o(1).
\end{eqnarray}

\noindent Then an application of the Chebyshev's inequality implies Theorem \ref{Theo3} (ii), (iii) and (iv). 

Thus, we prove (\ref{eq60}). A similar argument as in the proof of Lemma \ref{lemma11} implies that 
\begin{eqnarray*}
\Delta_{n,1}^{\prime} = \Delta_{n,1}^{\prime \prime} + o(1), \hspace*{5mm} \Delta_{n,2}^{\prime} - \alpha_{n}^{\prime} = \Delta_{n,2}^{\prime \prime} + o(1) \hspace*{5mm} \text{and} \hspace*{5mm} \Delta_{n,3}^{\prime} = \Delta_{n,3}^{\prime  \prime}+o(1).
\end{eqnarray*}

Recall the well-known variance formula $Var(X) = \mathbb{E}[Var(X| \mathcal{G})] + Var(\mathbb{E}[X|\mathcal{G}])$, where $X$ is a random variable and $\mathcal{G}$ is a sub-$\sigma$-field. Consequently, a combination of the variance formula with $\mathcal{G} = \mathcal{G}_{m_{n}^{\prime}}$, Lemma \ref{lemma13} and Lemma \ref{lemma14} show (\ref{eq60}). This concludes our proof.
\end{proof}

Finally, it only remains to prove Lemmas \ref{lemma11}, \ref{lemma12}, \ref{lemma13} and \ref{lemma14}. Their proofs are close those of Lemmas 2.5, 2.6, 2.7 and 2.8 in \cite{Holm2011}, respectively. However, they are not exactly same due to the nature of the problem. Therefore, we have decided to give only complete proofs of Lemmas \ref{lemma11} and \ref{lemma12} where the main differences appear, and moreover, the key estimations for the proofs of Lemmas \ref{lemma13} and \ref{lemma14} are developed. Then, to avoid unnecessary repetitions, the interested reader can verify that Lemmas \ref{lemma13} and \ref{lemma14} follows along the lines of the proofs of Lemma 2.7 and 2.8 in \cite{Holm2011} (see also \cite[Lemmas 2.7 and 2.8]{Ceciliarxiv20102}) together with estimations used in the proof of Lemma \ref{lemma12}. 

\subsection{Proof of Lemma \ref{lemma11}}

Recall the definition of $(\hat{n}_{v}, 1 \leq  d_{n}(v) \leq m_{n}$) in (\ref{eq54}). The following result shows that $n_{v}$ is close to $\hat{n}_{v}$.

\begin{proposition} \label{proposition1}
Suppose that Condition \ref{Cond1} holds. For $0 \leq i \leq m_{n}$, let $v \in T_{n}^{{\rm sp}}$ such that $d_{n}(v) = i$.  For large enough $n$, we have that
\begin{eqnarray*}
\mathbb{P} \left( \left| n_{v} - \hat{n}_{v} \right| > n^{0.6} \right) \leq n^{-0.19}.
\end{eqnarray*}
\end{proposition}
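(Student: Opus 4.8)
The plan is to exploit the stochastic sandwich~(\ref{eq19}), which says that, conditionally on the split vectors along the path from the root to $v$, one has
\[
\text{binomial}\Bigl(n, \textstyle\prod_{k=1}^{i} W_{v,k}\Bigr) - s i \;\le_{\text{st}}\; n_v \;\le_{\text{st}}\; \text{binomial}\Bigl(n, \textstyle\prod_{k=1}^{i} W_{v,k}\Bigr) + s_1 i .
\]
Since $\hat n_v = n\prod_{k=1}^i W_{v,k}$ is exactly $n$ times the success probability of that binomial, the difference $n_v - \hat n_v$ is, up to the deterministic additive error of size $O(i)=O(m_n)=O(\log\log n)$, a centered binomial fluctuation. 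So the first step is to condition on $\mathcal{G}_i$ (equivalently on $\hat n_v$), reducing the claim to a concentration statement for $\mathrm{Bin}(n,q) - nq$ with $q = \hat n_v/n$, uniformly in the (random) value of $q\in[0,1]$.

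The second step is a standard Chernoff/Bernstein bound. For $X\sim\mathrm{Bin}(n,q)$ we have $\mathbb{P}(|X-nq|>t)\le 2\exp\bigl(-t^2/(2(nq+t/3))\bigr)$; taking $t = \tfrac12 n^{0.6}$ (to absorb the $O(m_n)$ deterministic shift, which is $o(n^{0.6})$ for large $n$) and using $nq\le n$, the exponent is of order $-n^{1.2}/n = -n^{0.2}$, so the bound is $\le 2\exp(-c\,n^{0.2})$ for some $c>0$. This is $o(n^{-0.19})$, indeed $o(n^{-K})$ for every $K$, and in particular holds for the worst case $q$, so no further integration over the law of $\hat n_v$ is needed — one simply takes the conditional bound and then takes expectation. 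Combining with the deterministic error bound $|n_v - \hat n_v|\le \max(n_v^{\text{sandwich gap}})$ from~(\ref{eq19}), which is $\le (s+s_1)m_n = o(n^{0.6})$, gives $\mathbb{P}(|n_v-\hat n_v|>n^{0.6})\le 2\exp(-cn^{0.2})\le n^{-0.19}$ for $n$ large.

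The only mild subtlety — and the one place where a little care is warranted rather than a genuine obstacle — is making the Chernoff bound uniform in $q$: when $q$ is very small the binomial is concentrated on a tiny scale, so the bound is only easier, while for moderate or large $q$ the variance proxy $nq\le n$ is the binding case; either way the $n^{0.6}$ deviation is far larger than the standard deviation $\sqrt{nq}\le\sqrt n$, so the tail is super-polynomially small and the stated $n^{-0.19}$ is very comfortably met. I expect the whole argument to be short; the exponents $0.6$ and $0.19$ are chosen with slack precisely so that the deterministic $O(\log\log n)$ shift and the constants in the exponential are irrelevant.
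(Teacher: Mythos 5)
Your argument is correct and is essentially the approach underlying the result: the paper itself offers no independent proof but simply cites \cite[Lemma 1.1]{Holm2012}, whose argument rests on the same conditional binomial sandwich (\ref{eq19}) together with a concentration bound for $\mathrm{binomial}(n,\hat n_v/n)$, the deterministic shift of order $O(m_n)=O(\ln\ln n)$ being negligible against $n^{0.6}$. The only difference is cosmetic: you invoke a Chernoff/Bernstein bound, giving a super-polynomially small tail, whereas the cited proof uses a Chebyshev-type bound, which already yields the stated polynomial rate $n^{-0.19}$; either suffices.
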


\begin{proof}
See \cite[Lemma 1.1]{Holm2012} (which holds also in the lattice case).
\end{proof}

Recall the definition of $(\hat{\xi}_{v}, 1 \leq  d_{n}(v) \leq m_{n})$ in (\ref{eq54}). It is not difficult to deduce that
\begin{eqnarray} \label{eq45}
\mathbb{P}\left(\hat{\xi}_{v} \geq x| \mathcal{G}_{m_{n}} \right) = \left(1-p_{n} \right) \mathds{1}_{\left\{x e^{\frac{c}{\mu}}  \frac{n}{\ln n}\frac{1}{\hat{n}_{v}} \leq 1 \right \}}, \hspace*{5mm} x >0.
\end{eqnarray}

\begin{proof}[Proof of Lemma \ref{lemma11}]
We first show (i) for the non-lattice case. The lattice case follows from exactly the same argument. From (\ref{eq44}), (\ref{eq45}) and the triangle inequality, we notice that
\begin{align*}
\mathbb{E} \left[ |\Delta_{n,1} - \Delta_{n,1}^{\prime}| \right]  & \leq  (1-p_{n})\sum_{i=1}^{m_{n}} \sum_{d_{n}(v) = i} \mathbb{E} \left[ \left| \mathds{1}_{\left\{ n_{v} \geq x e^{\frac{c}{\mu}}  \frac{n}{\ln n} \right \}} - \mathds{1}_{\left\{ \hat{n}_{v} \geq x e^{\frac{c}{\mu}}  \frac{n}{\ln n} \right \}} \right|  \right] \\
& =  \frac{c}{\ln n} \sum_{i=1}^{m_{n}} \sum_{d_{n}(v) = i} \mathbb{P} \left(  n_{v} \geq x e^{\frac{c}{\mu}}  \frac{n}{\ln n}, \hat{n}_{v} < x e^{\frac{c}{\mu}}  \frac{n}{\ln n}  \right) \\
& \hspace*{10mm} + \frac{c}{\ln n} \sum_{i=1}^{m_{n}} \sum_{d_{n}(v) = i} \mathbb{P}  \left(  n_{v} < x e^{\frac{c}{\mu}}  \frac{n}{\ln n}, \hat{n}_{v} \geq x e^{\frac{c}{\mu}}  \frac{n}{\ln n} \right).
\end{align*}

\noindent Denote the first term on the right-hand side by $I_{n}^{1}$ and the second term by $I_{n}^{2}$. We first deal with $I_{n}^{1}$ and show that $I_{n}^{1} = o(1)$. For $\delta_{1} \in (0,1)$, we observe that 
\begin{align} \label{eq92}
I_{n}^{1} & \leq \frac{c}{\ln n} \sum_{i=1}^{m_{n}} \sum_{d_{n}(v) = i} \mathbb{P} \left(  n_{v} \geq  x e^{\frac{c}{\mu}}  \frac{n}{\ln n}, \hat{n}_{v} < \delta_{1} x e^{\frac{c}{\mu}}  \frac{n}{\ln n}  \right) \nonumber \\
& \hspace*{10mm}  + \frac{c}{\ln n} \sum_{i=1}^{\infty} \sum_{d_{n}(v) = i} \mathbb{P} \left(  \delta_{1} x e^{\frac{c}{\mu}}  \frac{n}{\ln n}  \leq \hat{n}_{v} <  x e^{\frac{c}{\mu}}  \frac{n}{\ln n}  \right).
\end{align}

\noindent If $d_{n}(v) = i$ for $1 \leq i \leq m_{n}$, the relationship (\ref{eq19}) implies that 
\begin{eqnarray}  \label{eq93}
\mathbb{P} \left(  n_{v} \geq  x e^{\frac{c}{\mu}}  \frac{n}{\ln n}, \hat{n}_{v} < \delta_{1} x e^{\frac{c}{\mu}}  \frac{n}{\ln n}  \right) & \leq & \mathbb{P} \left(  {\rm binomial}(n, \hat{n}_{v}/n) \geq  x e^{\frac{c}{\mu}}  \frac{n}{\ln n} - s_{1}i, \hat{n}_{v} <\delta_{1} x e^{\frac{c}{\mu}}  \frac{n}{\ln n}  \right)  \nonumber \\
& \leq & \mathbb{P} \left(  {\rm binomial} \left(n, \delta_{1} x e^{\frac{c}{\mu}}  \frac{1}{\ln n} \right) \geq  x e^{\frac{c}{\mu}}  \frac{n}{\ln n} - s_{1}i  \right) \nonumber \\
& = & \mathbb{P} \left(  {\rm binomial} \left(n, \delta_{1} x e^{\frac{c}{\mu}}  \frac{1}{\ln n} \right) - x e^{\frac{c}{\mu}}  \frac{\delta_{1}n}{\ln n} \geq  x e^{\frac{c}{\mu}}  \frac{(1- \delta_{1})n}{\ln n} - s_{1}i  \right) \nonumber \\
& \leq & C_{1} (\ln n)/n,
\end{eqnarray}

\noindent for $t \geq 0$ and some constant $C_{1} >0$; where we have used Chebyshev’s inequality and the fact that the variance of a $\text{binomial}(m, q)$ is $mq(1 -q)$, for the last inequality. On the other hand, Lemma \ref{ExtraLemma} (i) implies that 
\begin{eqnarray*} 
\lim_{n \rightarrow \infty}  \frac{c}{\ln n} \sum_{i=1}^{\infty} \sum_{d_{n}(v) = i} \mathbb{P} \left(  \delta_{1} x e^{\frac{c}{\mu}}  \frac{n}{\ln n}  \leq \hat{n}_{v} <  x e^{\frac{c}{\mu}}  \frac{n}{\ln n}  \right)= (\delta^{-1}_{1} - 1) c\mu^{-1}x^{-1}e^{-\frac{c}{\mu}}.  
\end{eqnarray*}

\noindent By combining the previous limit and the estimate (\ref{eq93}) into (\ref{eq92}), we obtain that
\begin{eqnarray*}
\limsup_{n \rightarrow \infty} I_{n}^{1} = (\delta^{-1}_{1} - 1) c\mu^{-1}x^{-1}e^{-\frac{c}{\mu}}.  
\end{eqnarray*}

\noindent By the arbitrariness of $\delta_{1} \in (0,1)$, we deduce that $I_{n}^{1} = o(1)$. We complete the proof of (i) by showing that $I_{n}^{2} = o(1)$. For $\delta_{2} > 1$, we observe that 
\begin{eqnarray*}
I_{n}^{2} \leq \frac{c}{\ln n} \sum_{i=1}^{m_{n}} \sum_{d_{n}(v) = i} \mathbb{P} \left(  n_{v} <  x e^{\frac{c}{\mu}}  \frac{n}{\ln n}, \hat{n}_{v} \geq \delta_{2} x e^{\frac{c}{\mu}}  \frac{n}{\ln n}  \right) + \frac{c}{\ln n} \sum_{i=1}^{\infty} \sum_{d_{n}(v) = i} \mathbb{P} \left(   x e^{\frac{c}{\mu}}  \frac{n}{\ln n}  \leq \hat{n}_{v} < \delta_{2} x e^{\frac{c}{\mu}}  \frac{n}{\ln n}  \right).
\end{eqnarray*}

\noindent But one can show via similar arguments that $I_{n}^{2} = o(1)$; details are left to the reader. Then, an application of the Markov's inequality combined with the previous estimates concludes the proof of (i). 

We next establish (ii). We observe that
\begin{eqnarray*}
\mathbb{E} \left[ \xi_{v} \mathds{1}_{\left \{ \xi_{v} \leq \theta \right\} } | \mathcal{F}_{m_{n}} \right] = (1-p_{n}) \frac{\ln n}{n} e^{-\frac{c}{\mu}} n_{v}  \mathds{1}_{\left\{ n_{v} \leq  \theta e^{\frac{c}{\mu}} \frac{n}{\ln n}\right \}}
\end{eqnarray*}

\noindent and 
\begin{eqnarray*}
 \Delta_{n,2}^{\prime} = \mathbb{E} \left[ \hat{\xi}_{v} \mathds{1}_{\left \{ \hat{\xi}_{v} \leq \theta \right\} } | \mathcal{G}_{m_{n}} \right] = (1-p_{n}) \frac{\ln n}{n} e^{-\frac{c}{\mu}} \hat{n}_{v}  \mathds{1}_{\left\{ \hat{n}_{v} \leq  \theta e^{\frac{c}{\mu}} \frac{n}{\ln n}\right \}}.
\end{eqnarray*}

\noindent Then triangle inequality implies that
\begin{align*}
\mathbb{E} \left[ \left|\sum_{1 \leq  d_{n}(v) \leq m_{n}} \mathbb{E} \left[ \xi_{v}
         \mathds{1}_{\left \{ \xi_{v} \leq \theta \right\} } | \mathcal{F}_{m_{n}} \right]-
        \Delta_{n,2}^{\prime} \right| \right] &  \leq  \frac{c}{n} e^{-\frac{c}{\mu}} \sum_{i=1}^{m_{n}} \sum_{d_{n}(v) = i} \mathbb{E} \left[\left | n_{v} \mathds{1}_{\left\{ n_{v} \leq  \theta e^{\frac{c}{\mu}} \frac{n}{\ln n}\right \}} - \hat{n}_{v} \mathds{1}_{\left\{ \hat{n}_{v} \leq \theta e^{\frac{c}{\mu}} \frac{n}{\ln n} \right \}}\right|  \right] \\
& \leq \frac{c}{n} e^{-\frac{c}{\mu}} \sum_{i=1}^{m_{n}}  \sum_{d_{n}(v) = i}  \mathbb{E} \left[\left | n_{v} - \hat{n}_{v} \right| \mathds{1}_{\left\{ n_{v} \leq  \theta e^{\frac{c}{\mu}} \frac{n}{\ln n}\right \}} \right] \\
& \hspace*{5mm} + \frac{c}{n} e^{-\frac{c}{\mu}} \sum_{i=1}^{m_{n}}  \sum_{d_{n}(v) = i}  \mathbb{E} \left[ \hat{n}_{v}  \left | \mathds{1}_{\left\{ n_{v} \leq  \theta e^{\frac{c}{\mu}} \frac{n}{\ln n}\right \}} - \mathds{1}_{\left\{ \hat{n}_{v} \leq \theta e^{\frac{c}{\mu}} \frac{n}{\ln n} \right \}}\right|  \right].
\end{align*}

\noindent On the one hand, Proposition \ref{proposition1}  implies that
\begin{eqnarray*}
 \frac{c}{n} e^{-\frac{c}{\mu}} \sum_{i=1}^{m_{n}} \sum_{d_{n}(v) = i}   \mathbb{E} \left[\left | n_{v} - \hat{n}_{v} \right| \mathds{1}_{\left\{ n_{v} \leq  \theta e^{\frac{c}{\mu}} \frac{n}{\ln n}\right \}} \right] \leq \frac{c}{n} e^{-\frac{c}{\mu}} \sum_{i=1}^{m_{n}} \sum_{d_{n}(v) = i}   \mathbb{E} \left[\left | n_{v} - \hat{n}_{v} \right| \right] = o(1).
\end{eqnarray*}

\noindent On the other hand, a similar computation as in the proof of point (i) shows that 
\begin{eqnarray*}
\frac{c}{n} e^{-\frac{c}{\mu}} \sum_{i=1}^{m_{n}} \sum_{d_{n}(v) = i}  \mathbb{E} \left[ \hat{n}_{v}  \left | \mathds{1}_{\left\{ n_{v} \leq  \theta e^{\frac{c}{\mu}} \frac{n}{\ln n}\right \}} - \mathds{1}_{\left\{ \hat{n}_{v} \leq \theta e^{\frac{c}{\mu}} \frac{n}{\ln n} \right \}}\right|  \right] = o(1). 
\end{eqnarray*}

\noindent Thus, a combination of the previous estimates with the Markov inequality shows (ii). 

We continue with the proof of (iii). An application of the triangle inequality implies that
\begin{align} \label{eq66}
& \mathbb{E} \left[ \left | \frac{\ln n}{n} \sum_{d_{n}(v) = m_{n}} n_{v} e^{- \frac{c}{\mu} \frac{\ln n_{v}}{\ln n}} -  ce^{-\frac{c}{\mu} }  \sum_{d_{n}(v)=m_{n}} \frac{n_{v} \varpi(\ln n_{v})}{n} - \alpha_{n}^{\prime} \right| \right] \nonumber \\
& \hspace*{10mm} \leq \frac{\ln n}{n} b^{m_{n}}  \mathbb{E} \left[  \left |  n_{v} e^{- \frac{c}{\mu} \frac{\ln n_{v}}{\ln n}} - \hat{n}_{v} e^{- \frac{c}{\mu} \frac{\ln \hat{n}_{v}}{\ln n}}  \right| \right] + \frac{c}{n}e^{-\frac{c}{\mu} } b^{m_{n}}  \mathbb{E} \left[ \left | n_{v} \varpi(\ln n_{v}) - \hat{n}_{v} \varpi(\ln \hat{n}_{v}) \right| \right]. 
\end{align}

\noindent By using Proposition \ref{proposition1}, a similar argument as in the proof of point (ii) shows that 
\begin{align} \label{eq67}
& \frac{\ln n}{n} b^{m_{n}}  \mathbb{E} \left[  \left |  n_{v} e^{- \frac{c}{\mu} \frac{\ln n_{v}}{\ln n}} - \hat{n}_{v} e^{- \frac{c}{\mu} \frac{\ln \hat{n}_{v}}{\ln n}}  \right| \right] \nonumber \\
& \hspace*{10mm} \leq \frac{\ln n}{n} b^{m_{n}} \left(  \mathbb{E} \left[  \left |  n_{v} - \hat{n}_{v} \right| \right] +  \mathbb{E} \left[ \hat{n}_{v}  \left |  e^{- \frac{c}{\mu} \frac{\ln n_{v}}{\ln n}} -  e^{- \frac{c}{\mu} \frac{\ln \hat{n}_{v}}{\ln n}}  \right| \right]\right) = o(1). 
\end{align}

\noindent On the other hand, the triangle inequality and  Proposition \ref{proposition1} imply that 
\begin{eqnarray*}
\mathbb{E} \left[ \left | n_{v} \varpi(\ln n_{v}) - \hat{n}_{v} \varpi(\ln \hat{n}_{v}) \right| \right] & \leq &  \mathbb{E} \left[ \varpi(\ln n_{v}) \left | n_{v}  - \hat{n}_{v} \right| \right] + \mathbb{E} \left[ \hat{n}_{v} \left | \varpi(\ln n_{v}) -  \varpi(\ln \hat{n}_{v}) \right| \right] \\
& = & \mathbb{E} \left[ \hat{n}_{v} \left | \varpi(\ln n_{v}) -  \varpi(\ln \hat{n}_{v}) \right| \right] + o(n b^{-m_{n}}), 
\end{eqnarray*}

\noindent where we have used that $\varpi$ is a continuous $d$-periodic function, with $d$ defined in (\ref{eq51}), and thus it is bounded. We notice that 
\begin{align} \label{eq109}
\mathbb{E} \left[ \hat{n}_{v} \left | \varpi(\ln n_{v}) -  \varpi(\ln \hat{n}_{v}) \right| \right] & = \mathbb{E} \left[ \hat{n}_{v} \left | \varpi(\ln n_{v}) -  \varpi(\ln \hat{n}_{v}) \right| \mathds{1}_{\{ |n_{v} - \hat{n}_{v}| \leq \hat{n}_{v}^{2/3} \}} \right] \nonumber \\ 
& \hspace*{10mm} + \mathbb{E} \left[ \hat{n}_{v} \left | \varpi(\ln n_{v}) -  \varpi(\ln \hat{n}_{v}) \right| \mathds{1}_{\{ |n_{v} - \hat{n}_{v}| > \hat{n}_{v}^{2/3} \}} \right].
\end{align}

\noindent It is not difficult to see that in the event $\{ |n_{v} - \hat{n}_{v}| < \hat{n}_{v}^{2/3} \}$, we can make $|\ln n_{v} - \ln \hat{n}_{v}|$ arbitrary small by taking $n$ large enough. Hence the continuity of the function $\varpi$ allows us to deduce that
\begin{eqnarray} \label{eq110}
\mathbb{E} \left[ \hat{n}_{v} \left | \varpi(\ln n_{v}) -  \varpi(\ln \hat{n}_{v}) \right| \right]  = \mathbb{E} \left[ \hat{n}_{v} \left | \varpi(\ln n_{v}) -  \varpi(\ln \hat{n}_{v}) \right| \mathds{1}_{\{ |n_{v} - \hat{n}_{v}| > \hat{n}_{v}^{2/3} \}} \right] + o(n b^{-m_{n}}). 
\end{eqnarray}

\noindent Recall that a binomial random variable with parameters $(n,q)$ has expected value $nq$ and variance $nq(1-q)$. Following the same reasoning as in the proof of Proposition \ref{proposition1}, we deduce from an application of (\ref{eq19}) and the conditional version of Chebyshev's inequality that 
\begin{eqnarray} \label{eq111}
\mathbb{E} \left[ \hat{n}_{v} \mathds{1}_{\{ |n_{v} - \hat{n}_{v}| > \hat{n}_{v}^{2/3} \}} \right] = 4\mathbb{E}[\hat{n}_{v}^{2/3}] = o(n b^{-m_{n}}).
\end{eqnarray}

\noindent By recalling that the function $\varpi$ is continuous and thus bounded, the estimations (\ref{eq109}), (\ref{eq110}) and (\ref{eq111}) imply that
\begin{eqnarray} \label{eq69}
b^{m_{n}}  \mathbb{E} \left[ \left | n_{v} \varpi(\ln n_{v}) - \hat{n}_{v} \varpi(\ln \hat{n}_{v}) \right| \right]= o(n).
\end{eqnarray}

\noindent Therefore, the combination of (\ref{eq67}) and (\ref{eq69}) into (\ref{eq66}) implies 
\begin{eqnarray*}
\mathbb{E} \left[ \left | \frac{\ln n}{n} \sum_{d_{n}(v) = m_{n}} n_{v} e^{- \frac{c}{\mu} \frac{\ln n_{v}}{\ln n}} -  ce^{-\frac{c}{\mu} }  \sum_{d_{n}(v)=m_{n}} \frac{n_{v} \varpi(\ln n_{v})}{n} - \alpha_{n}^{\prime} \right| \right]= o(1)
\end{eqnarray*}

\noindent which together with the Markov inequality proves (iii). 

Finally, point (iv) follows from a similar argument as in the proof of (ii) by using Proposition \ref{proposition1}.
\end{proof}

\subsection{Proof of Lemma \ref{lemma12}}

We observe that $(n_{v}, d_{n}(v) = i)$ is a sequence of identically distributed random variables, for $1 \leq i \leq m_{n}$. Moreover, the distribution of $n_{v}$ for $v \in T_{n}^{{\rm sp}}$ such that $d_{n}(v) = i$ is determined by the sequence $(W_{v,k}, k=1, \dots, i)$ of i.i.d.\ random variables on $[0,1]$ given by the split vectors associated with the vertices on the unique path from $v$ to the root. We introduce the notation $Y_{v, i} \coloneqq - \sum_{k=1}^{i} \ln W_{v, k}$. We sometimes omit the vertex index of $(W_{v,k}, k=1, \dots, i)$ and we just write $(W_{k}, k=1, \dots, i)$ when it is free of ambiguity. Similarly, we write $Y_{i}$ instead of $Y_{v, i}$. 

\begin{proof}[Proof of Lemma \ref{lemma12}]
Recall our assumption (\ref{eq9}) in the percolation parameter, i.e., $p_{n} = 1 - c/\ln n$, where $c > 0$ is fixed. We first show (i) in the non-lattice case. From the identity (\ref{eq45}), we deduce that
\begin{eqnarray*}
\mathbb{E} \left[ \Delta_{n,1}^{\prime} \right] = (1-p_{n})\sum_{i=1}^{m_{n}} \mathbb{E} \left[ \sum_{d_{n}(v)=i}  \mathds{1}_{\left\{ x e^{\frac{c}{\mu}}  \frac{n}{\ln n}\frac{1}{\hat{n}_{v}} \leq 1 \right \}}   \right] =  (1-p_{n})\sum_{i=1}^{m_{n}} b^{i}\mathbb{P} \left( Y_{i} \leq \ln \left( x^{-1} e^{-\frac{c}{\mu}}  \ln n \right)  \right).
\end{eqnarray*}

\noindent By Lemma \ref{ExtraLemma}, we obtain that
\begin{eqnarray} \label{eq53}
\sum_{i=1}^{\infty} b^{i}\mathbb{P} \left( Y_{i} \leq \ln \left( x^{-1} e^{-\frac{c}{\mu}}  \ln n \right)  \right) = \left(\mu^{-1} + o(1) \right) x^{-1} e^{-\frac{c}{\mu}}  \ln n = \mu^{-1}e^{-\frac{c}{\mu}} x^{-1} \ln n +o(\ln n).
\end{eqnarray}

\noindent Thus (i) follows from (\ref{eq53}) by providing that
\begin{eqnarray} \label{eq47}
(1-p_{n})\sum_{i=m_{n}+1}^{\infty} b^{i}\mathbb{P} \left( Y_{i} \leq \ln \left( x^{-1} e^{-\frac{c}{\mu}}  \ln n \right)  \right) = o(1).
\end{eqnarray}

\noindent Choose an arbitrary $t >0$. By an application of the Markov
inequality and the fact that $(W_{k}, k=1, \dots, i)$ are i.i.d.\ random variables, we obtain that
\begin{eqnarray} \label{eq46}
 \mathbb{P} \left( Y_{i} \leq \delta  \right) = \mathbb{P} \left( e^{-t Y_{i}} \geq e^{-\delta t}  \right) \leq m(t)^{i}e^{\delta t}, 
\end{eqnarray}

\noindent for $\delta > 0$, where we define $m(t) \coloneqq \mathbb{E}[V_{1}^{t}]$ for $t >0$.
Then,
\begin{eqnarray} \label{eq52}
(1-p_{n})\sum_{i=m_{n}+1}^{\infty} b^{i}\mathbb{P} \left( Y_{i} \leq \ln \left( x^{-1} e^{-\frac{c}{\mu}}  \ln n \right)  \right) \leq cx^{-t} e^{-\frac{c}{\mu}} (\ln n)^{t-1} \sum_{i = m_{n}+1}^{\infty} (m(t)b)^{i}.
\end{eqnarray}

\noindent Thus our claim (\ref{eq47}) follows after some computations by taking $t >0$ such that $bm(t) <1$ (this is possible by Condition \ref{Cond1}) and $\beta > \max( (1-t)/ \log_{b}(bm(t)), -2/(1+\log_{b} \mathbb{E}[V_{1}^{2}]))$.

In the lattice case, we see that (\ref{eq53}) becomes
\begin{eqnarray*}
\sum_{i=1}^{\infty} b^{i}\mathbb{P} \left( Y_{i} \leq \ln \left( x^{-1} e^{-\frac{c}{\mu}}  \ln n \right)  \right) & =  &\left(\frac{d}{\mu}\frac{1}{1-e^{-d}} + o(1) \right) e^{d \lfloor d^{-1}  \ln(  x^{-1} e^{-\frac{c}{\mu}}  \ln n )  \rfloor} \nonumber \\
& = & \frac{d}{\mu}\frac{1}{1-e^{-d}} e^{d \lfloor d^{-1}  \ln(  x^{-1} e^{-\frac{c}{\mu}})+ \{ d^{-1} \ln \ln n \}   \rfloor - d \{ d^{-1} \ln \ln n \}  }  \ln n + o(\ln n),
\end{eqnarray*}
\noindent and the results follows exactly as in the non-lattice case.

We next establish (ii) only in the non-lattice case. The lattice case follows from exactly the same argument. We observe that
\begin{eqnarray*}
\mathbb{E}[ \Delta_{n,2}^{\prime}] = \frac{\ln n}{n}(1-p_{n})e^{-\frac{c}{\mu}} \sum_{i=1}^{m_{n}} \mathbb{E} \left[ \sum_{d_{n}(v) = i} \hat{n}_{v} \mathds{1}_{\left \{ \hat{n}_{v} \leq \theta e^{\frac{c}{\mu}} \frac{n}{\ln n}  \right\}}  \right] = c e^{-\frac{c}{\mu}} \sum_{i=1}^{m_{n}} b^{i} \mathbb{E} \left[ e^{-Y_{i}} \mathds{1}_{\left \{ Y_{i} \geq \ln \left( \theta^{-1} e^{-\frac{c}{\mu}} \ln n \right)  \right\}}  \right].
\end{eqnarray*}

\noindent By noticing that $\mathbb{E}[e^{-Y_{i}}] = b^{-i}$, we use integration by parts to obtain that
\begin{eqnarray*}
\mathbb{E}[ \Delta_{n,2}^{\prime}]   
& = &  c e^{-\frac{c}{\mu}}  m_{n} -  \frac{c \theta}{\ln n} \sum_{i=1}^{m_{n}} b^{i}
\mathbb{P}\left(Y_{i} \leq \ln \left( \theta^{-1} e^{-\frac{c}{\mu}} \ln n \right) \right) \\
&   & \quad -  c e^{-\frac{c}{\mu}} \int_{0}^{\ln \left( \theta^{-1} e^{-\frac{c}{\mu}} \ln n \right)} e^{-z} \sum_{i=1}^{m_{n}} b^{i} \mathbb{P}(Y_{i} \leq z) {\rm d} z \\
& = & c e^{-\frac{c}{\mu}}  m_{n} - \frac{c}{\mu} e^{- \frac{c}{\mu}} - c e^{-\frac{c}{\mu}} \int_{0}^{\ln \left( \theta^{-1} e^{-\frac{c}{\mu}} \ln n \right)} e^{-z}\sum_{i=1}^{m_{n}} b^{i} \mathbb{P}(Y_{i} \leq z) {\rm d} z + o(1),
\end{eqnarray*}

\noindent where we have used (\ref{eq53}) and (\ref{eq47}), with $t >0$ such that $bm(t) < 1$ and $\beta > \max( (1-t)/ \log_{b}(bm(t)), -2/(1+\log_{b} \mathbb{E}[V_{1}^{2}]))$,  in order to get the last equality. 

On the other hand, we deduce from (\ref{eq52}) that 
\begin{eqnarray*} 
\int_{0}^{\ln \left( \theta^{-1} e^{-\frac{c}{\mu}} \ln n \right)} e^{-z}\sum_{i=m_{n}+1}^{\infty} b^{i}\mathbb{P} \left( Y_{i} \leq z  \right) {\rm d} z \leq  \theta^{-t} e^{-\frac{c}{\mu}} (\ln n)^{t} \left( \sum_{i = m_{n}+1}^{\infty} (m(t)b)^{i} \right) \ln \left( \theta^{-1} e^{-\frac{c}{\mu}} \ln n \right) = o(1),
\end{eqnarray*}

\noindent when $t >0$ such that $bm(t) <1$ (this is possible by Condition \ref{Cond1}) and $\beta > \max( -t/ \log_{b}(bm(t)),  (1-t)/ \log_{b}(bm(t)), -2/(1+\log_{b} \mathbb{E}[V_{1}^{2}]))$. Hence 
\begin{eqnarray*}
\mathbb{E}[ \Delta_{n,2}^{\prime}]  =  c e^{-\frac{c}{\mu}} m_{n} - \frac{c}{\mu} e^{-\frac{c}{\mu}}  -  c e^{-\frac{c}{\mu}} \int_{0}^{\ln \left( \theta^{-1} e^{-\frac{c}{\mu}} \ln n \right)} e^{-z} \sum_{i=1}^{\infty} b^{i} \mathbb{P}(Y_{i} \leq z) {\rm d} z + o(1).
\end{eqnarray*}

\noindent By the result in (\ref{eq33}), we know that
\begin{eqnarray*}
 \int_{0}^{\ln \left( \theta^{-1} e^{-\frac{c}{\mu}} \ln n \right)} e^{-z}\left( \sum_{i=1}^{\infty} b^{i} \mathbb{P}(Y_{i} \leq z) -\mu^{-1} e^{z}\right){\rm d} z =\frac{\sigma^{2}-\mu^{2}}{2\mu^{2}}- \mu^{-1} + \phi \left(\ln \left( \theta^{-1} e^{-\frac{c}{\mu}} \ln n \right) \right) +  o(1),
\end{eqnarray*}

\noindent where $\phi: \mathbb{R} \rightarrow \mathbb{R}$ is the $d$-periodic continuous function in (\ref{eq33}). Therefore,
\begin{eqnarray*}
\mathbb{E}[ \Delta_{n,2}^{\prime}]  =  c e^{-\frac{c}{\mu}} m_{n} +  \frac{2c^{2} -c\sigma^{2}+c\mu^{2}}{2\mu^{2}}  e^{-\frac{c}{\mu}}  - ce^{-\frac{c}{\mu}}\phi \left(\ln \left( \theta^{-1} e^{-\frac{c}{\mu}} \ln n \right) \right) + \frac{c}{\mu}e^{-\frac{c}{\mu}} \ln \theta  - \frac{c}{\mu}e^{-\frac{c}{\mu}}  \ln  \ln n + o(1) 
\end{eqnarray*}

\noindent which proves point (ii).

We continue with the proof of (iii). Recall the function $m(t) = \mathbb{E}[V_{1}^{t}]$ for $t >0$. From the definition of $\hat{n}_{v}$ in (\ref{eq54}), we deduce that
\begin{eqnarray*}
\mathbb{E}[\alpha_{n}^{\prime}] & = & b^{m_{n}} e^{-\frac{c}{\mu}} m\left( 1- \frac{c}{\mu} \frac{1}{\ln n} \right)^{m_{n}} \ln n - ce^{-\frac{c}{\mu}} b^{m_{n}} \mathbb{E} \left[ \prod_{k=1}^{m_{n}} W_{k} \varpi \left( \ln n + \sum_{k=1}^{m_{n}} \ln W_{k} \right) \right] \\
& = & b^{m_{n}} e^{-\frac{c}{\mu}} m\left( 1- \frac{c}{\mu} \frac{1}{\ln n} \right)^{m_{n}} \ln n - ce^{-\frac{c}{\mu}} \varpi(\ln n),
\end{eqnarray*}

\noindent  since $\varpi$ is $d$-periodic, with $d$ defined in (\ref{eq51}), and  $\ln W_{k} \in d \mathbb{Z}$. We notice that $m(1) = \mathbb{E}[V_{1}] = 1/b$ and $m^{\prime}(1) = \mathbb{E}[V_{1} \ln V_{1}] = - \mu/b$.  Then a simple Taylor's expansion calculation shows that
\begin{eqnarray*}
m\left( 1- \frac{c}{\mu} \frac{1}{\ln n} \right) = \frac{1}{b} + \frac{c}{b\ln n} + o\left(\frac{1}{b \ln^{2} n} \right)
\end{eqnarray*}

\noindent which implies that 
\begin{eqnarray*}
\mathbb{E}[\alpha_{n}^{\prime}] = e^{-\frac{c}{\mu}} \ln n + c e^{-\frac{c}{\mu}} m_{n} - ce^{-\frac{c}{\mu}} \varpi(\ln n) + o(1),
\end{eqnarray*}

\noindent and completes the proof of (iii). 

We finally show (iv) only in the non-lattice case. The lattice case follows from exactly the same argument. We notice that
\begin{eqnarray*}
\mathbb{E}[ \Delta_{n,3}^{\prime}] & = & \frac{\ln^{2}n}{n^{2}}(1-p_{n})p_{n}e^{-2\frac{c}{\mu}} \sum_{i=1}^{m_{n}} \mathbb{E} \left[ \sum_{d_{n}(v) = i} \hat{n}_{v}^{2} \mathds{1}_{\left \{ \hat{n}_{v} \leq \theta e^{\frac{c}{\mu}} \frac{n}{\ln n}  \right\}}   \right] \\
&= & c e^{-2\frac{c}{\mu}}  (\ln n)  p_{n} \sum_{i=1}^{m_{n}} b^{i} \mathbb{E} \left[ e^{-2Y_{i}} \mathds{1}_{\left \{ Y_{i} \geq \ln \left( \theta^{-1} e^{-\frac{c}{\mu}} \ln n \right)  \right\}}  \right].
\end{eqnarray*}

\noindent By integration by parts,  we obtain that
\begin{align*}
\mathbb{E}[ \Delta_{n,3}^{\prime}]  &=   - \frac{c \theta^{2} p_{n}}{\ln n}  \sum_{i=1}^{m_{n}} b^{i} \mathbb{P}\left(Y_{i} \leq \ln \left( \theta^{-1} e^{-\frac{c}{\mu}} \ln n \right) \right)  \\
& \hspace*{10mm} + 2c e^{-2\frac{c}{\mu}} (\ln n)  p_{n} \int_{\ln \left( \theta^{-1} e^{-\frac{c}{\mu}} \ln n \right)}^{\infty} e^{-2z} \sum_{i=1}^{m_{n}} b^{i} \mathbb{P}(Y_{i} \leq z) {\rm d} z \\
& =  c \mu^{-1} e^{-\frac{c}{\mu}} \theta + o(1),
\end{align*}

\noindent where we have used (\ref{eq53}) and (\ref{eq47}), with $t >0$ such that $bm(t) < 1$ and $\beta > \max( -t/ \log_{b}(bm(t)), (1-t)/ \log_{b}(bm(t)), -2/(1+\log_{b} \mathbb{E}[V_{1}^{2}]))$, in order to get the last equality.  This concludes the proof of (iv).
\end{proof}

\end{appendices}

\end{document}